\documentclass[12pt]{amsart}

\usepackage{amssymb}
\usepackage{amsthm}
\usepackage{amsmath}
\usepackage{textcomp}
\usepackage{mathrsfs}
\usepackage{tikz}
\usepackage[colorlinks,linkcolor=blue,citecolor=red]{hyperref}
\usepackage{booktabs}

\newtheorem{propo}{Proposition}[section]
\newtheorem{theo}[propo]{Theorem}
\newtheorem{lemma}[propo]{Lemma}
\newtheorem{corol}[propo]{Corollary}
\newtheorem{prob}[propo]{Problem}
\newtheorem{defi}[propo]{Definition}

\newtheorem{pro}[propo]{Proposition}

\newtheorem{exam}[propo]{Example}

\newcommand{\stack}[2]{\genfrac{}{}{0pt}{}{#1}{#2}}

\def\Aut{{\sf Aut}}

\def\dim{{\sf dim}\,}
\def\GL{{\rm GL}}
\def\PGL{{\rm PGL}}

\def\Sym{{\rm Sym}}

\def\rk{{\rm rk}}
\def\d{{\rm d}}
\def\t{{\sf t}}

\def\o{{\omega}}
\def\Ga{\varGamma}
\def\M{{\sf M}}
\def\D{{\sf D}}
\def\l{\langle}
\def\r{\rangle}
\def\PGaU{{\rm P\Gamma U}}
\def\PGaL{{\rm P\Gamma L}}
\def\M{{\sf M}}
\def\Z{{\sf Z}}
\def\PG{{\rm PG}}
\def\J{{\rm J}}
\def\G{{\rm G}}
\def\DD{{\rm D}}
\def\H{{\rm H}}
\def\BF{{\rm BF}}
\def\AF{{\rm AF}}
\def\HF{{\rm HF}}
\def\O{{\rm O}}
\def\GH{{\rm GH}}
\def\GO{{\rm GO}}
\def\GD{{\rm GD}}
\def\K{{\rm K}}
\def\vs{\vskip0.05in}

\begin{document}

\title{Geodesic-transitive graphs with large diameter}
\thanks{This work is based upon the author's thesis \cite{thesis}.}
\author[P. C. Hua]{Pei Ce Hua}
\address{Peice Hua\\Shenzhen International Center for Mathematics\\
	Southern University of Science and Technology\\
	Shenzhen, 518055, P. R. China}
\email{huapc@pku.edu.cn}

\date\today

\begin{abstract}
We review the nearly complete classification project for finite distance-transitive graphs and compile a list of all known graphs. Interestingly, we find that those graphs with diameter larger than $ 4 $, apart from a small finite number of exceptions, are geodesic-transitive. Their geodesics exhibit a clear (often geometric) structure. On the other hand, we provide examples of graphs that are distance-transitive but not geodesic-transitive, including two
infinite families with diameter $ 3 $ and a few sporadic ones with diameter $ 3 $, $ 4 $ or $ 7 $. In the last section, we extend our investigation to polar Grassmann graphs and provide an explicit description of their geodesics.
\end{abstract}

\maketitle	

\begin{flushleft}
	
{\bf Keywords:} distance-transitive; geodesic-transitive; polar Grassmann graph

\end{flushleft}

%\begin{flushleft}

%{\bf Math. Subj. Class.:} 05E18; 20B25.

%\end{flushleft}

\section{Introduction}\label{sec-intro}

This paper is part of a series aimed at investigating the following problem\,:
\begin{prob}\label{prob11}
{\it Identify which distance-transitive graphs are geodesic-transitive.}
\end{prob}

All the graphs $ \Ga $ we consider are finite, simple, undirected, and connected. The \textit{distance} $ \d(X,Y) $ for any two vertices $ X,Y $ of $ \Ga $ is defined to be the length of a shortest path connecting $ X $ with $ Y $, where such a path is called a \textit{geodesic}. The \textit{diameter} $ \d $ of $ \Ga $ is the maximum distance in $ \Ga $. Depending on the context, a \textit{diameter} of $ \Ga $ is also used to refer to a longest geodesic of length $ \d $.\vs\vs\vs

\subsection{\it Three distance-related types of graph symmetry}{~}\vskip0.03in

A graph is called a \textit{distance-regular graph}, if there is a sequence of positive integers $ \{b_0,b_1\cdots,b_{\d-1}\,;c_1,c_2,\cdots,c_{\d}\} $ such that for any two vertices $ X,Y $ at distance $ i $, there are precisely $ c_i $ (resp.~$ b_i $) neighbors of $ X $ at distance $ i-1 $ (resp.~$ i+1 $) with $ Y $, where $ \d $ is the diameter. The sequence is called the \textit{intersection array}, and these integers are called the \textit{intersection numbers}. The study of distance-regular graphs has become a classic topic with a substantial body of results\,; refer to the book \cite{DRG-book} of the same title authored by A.E. Brouwer et al. \vs

One intriguing 
phenomenon is\,: this arithmetical regularity property often (especially in cases of large diameter) implies the existence of stronger symmetries. A graph is called \textit{$ G $-distance-transitive}, if it has an automorphism group $ G $ which acts transitively on the set of ordered vertex-pairs at each given distance. We may omit the prefix “$ G $-” if it is not specified. A distance-transitive graph is, of course, distance-regular. A notable fact observed in \cite{DRG-book} is that most known distance-regular graphs with diameter $ \d>3 $ are distance-transitive.\vs

The classification project for distance-transitive graphs, after a huge amount of effort, has reached its final stage. Interestingly, we observe a similar phenomenon regarding these graphs\,: those with large diameter often exhibit stronger symmetry. A graph is called \textit{$ G $-geodesic-transitive}, if it has an automorphism group $ G $ which acts transitively on the set of geodesics of each given length. Such graphs can be considered as a subclass of distance-transitive graphs possessing the highest “degree of symmetry”. Our first result indicates that, most known distance-transitive graphs with diameter $ \d>4 $ are geodesic-transitive.\vs\vs\vs

\subsection{\it Results}{~}\vskip0.03in

The specific study of geodesic-transitive graphs appears to be a relatively recent topic, initiated by the work \cite{GTG} in which some well-known graphs were shown to be geodesic-transitive, including Johnson graphs, Hamming graphs, Odd graphs, generalized polygons of order $ (1,t) $, and two more graphs. Earlier, in our previous work \cite{Hua} the (doubled) Grassmann graphs were studied.\vs

In Section \ref{sec-dtg}, we review the classification project for distance-transitive graphs and compile a list (Table \ref{tab-main}\,-\,\ref{tab-remaining}) of all known graphs. Note that graphs with diameter $ \d>4 $ are almost included in Table \ref{tab-main}; only a finite few are in Table \ref{tab-remaining}. In Section \ref{sec-gtg}, we show that the graphs in Table \ref{tab-main} are all geodesic-transitive\,; for graphs already addressed in \cite{Hua,GTG} we provide a short, non-inductive proof along with a refined description of their geodesics. We remark that the problem is not particularly difficult in most cases, as these distance-transitive graphs have been studied in detail, with more or less information available about their geodesics.

\begin{theo}\label{thm-gtg}
	Each graph listed in Table {\rm\ref{tab-main}} is geodesic-transitive.
\end{theo}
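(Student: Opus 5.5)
The plan is a case-by-case verification over the families in Table \ref{tab-main}, built on one reduction. Let $G$ be a distance-transitive group of $\Ga$. Then $\Ga$ is $G$-geodesic-transitive if and only if, for every $i<\d$ and every geodesic $(X_0,\dots,X_i)$, the stabilizer $G_{X_0,\dots,X_i}$ is transitive on the set
\[
\Ga_{i+1}(X_0)\cap\Ga(X_i).
\]
This is the set of extensions of the geodesic by one vertex, and it has size $b_i$.

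One direction of the reduction is immediate. For the other, suppose the stabilizer condition holds. We prove by induction on the length $i$ that $G$ is transitive on geodesics of length $i$; the case $i=1$ is arc-transitivity, which follows from distance-transitivity. If $G$ is transitive on geodesics of length $i$, then the stabilizer condition makes it transitive on geodesics of length $i+1$.

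So for each family the task is to describe geodesics concretely and then check this local transitivity. Rather than running the induction family by family, I would aim for a non-inductive description: exhibit a geometric invariant attached to a geodesic, such as a flag, a frame, or a chain of subspaces. The invariant must determine the geodesic up to a choice that the group clearly permutes transitively, and $G$ must act transitively on the invariants.

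The families would be handled as follows.

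\emph{Hamming and Johnson graphs, and their halved and folded versions.} A geodesic from $X$ to $Y$ is a sequence of single-coordinate changes (respectively, single-element swaps). It is determined by an ordering of the positions of difference together with the intermediate symbols. The stabilizer of $X$ in $S_q\wr S_n$ (respectively $S_n$) acts transitively on such ordered data. For the halved and folded versions, the same holds with pairs of changes, or with the distance measured modulo complements.

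\emph{Odd graphs and doubled versions.} The same argument applies, using the structure of set differences.

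\emph{Grassmann, dual polar, bilinear, alternating, Hermitian and quadratic forms graphs.} These are the $q$-analogues. A geodesic $X_0,\dots,X_i$ corresponds to chains such as $X_0\cap X_i\subset\cdots$ together with complementary data; in dual polar graphs, for instance, it is a chain of totally isotropic subspaces $X_0\cap X_j$. Witt's theorem, or transitivity of the relevant classical group on flags of a given type, gives the required transitivity. For the forms graphs, a geodesic from $0$ to a form of rank $i$ is a sequence of rank-one additions, and this corresponds to a decomposition that the stabilizer (a Levi-type subgroup) permutes transitively.

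\emph{Remaining entries.} For the remaining families (cycles, generalized polygons of order $(1,t)$, doubled Grassmann graphs, and the like), I would cite \cite{GTG,Hua} and reprove them briefly. For instance, in a generalized polygon a geodesic is an apartment segment, and the flag-transitivity of the (Moufang) group on such segments gives the result.

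I expect the main obstacle to be the forms graphs, especially the Hermitian and quadratic forms graphs and the half dual polar graphs. There, the set of one-step extensions $\Ga_{i+1}(X_0)\cap\Ga(X_i)$ is not obviously a single orbit of the pointwise stabilizer of the geodesic, because rank-one (or singular) summands can interact with the already-chosen ones. My first attempt would be to normalize the geodesic to a diagonal form $e_1,e_1+e_2,\dots$, compute the stabilizer of that normalized chain explicitly as a subgroup of the relevant classical or affine group, and show it acts transitively on the admissible next summands by a dimension or Witt-extension argument. Where this fails, I would fall back on counting: compare $b_i$ with the orbit sizes of the stabilizer.
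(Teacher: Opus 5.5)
Your inductive reduction is correct, and encoding a geodesic by flag or frame data is the same idea the paper uses. The paper reduces instead, via Lemma~\ref{lem31}, to the stabilizer of one antipodal pair $(X,Y)$ acting on $\mathcal{L}_{XY}$; it then gets surjectivity of the flag-to-geodesic map from the count $|\mathcal{F}_{XY}|=c_1c_2\cdots c_\d$. As it stands, however, your proposal does not cover Table~\ref{tab-main}, for three reasons.

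\emph{A missing family.} The incidence graph ${\rm I}(\D^{op\,(w,H)})$ of the opposites geometry (an antipodal $q$-cover of $\K_{q^{n-2},q^{n-2}}$) is not mentioned. It is not treated in \cite{GTG} or \cite{Hua}, so ``cite and reprove briefly'' does not apply. The paper shows that the geodesics between antipodal points $X,Y$ are exactly $(X,X+U,\l u\r,Y+U,Y)$, where $U$ is a complement of $X+Y$ with $U\nleq H$ and $\l u\r\leqslant U$, $\l u\r\nleq H$. It checks there are $c_1c_2c_3c_4$ of them, and writes down explicit elements of the parabolic subgroup fixing $(w,H)$ and $(X,Y)$.

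\emph{Unresolved cases.} The cases you flag as the main obstacle (forms graphs, half dual polar graphs) are left at ``first attempt \dots\ fall back on counting'', which is not yet an argument.

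\emph{A false case.} You include the quadratic forms graph. It is not in Table~\ref{tab-main}, and it is a standard example of a distance-regular graph that is not distance-transitive, so it cannot be geodesic-transitive.

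The missing ideas are as follows.

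\emph{Halved and folded graphs.} For half dual polar graphs, and uniformly for $\frac12\H(k,2)$, $\overline\H(k,2)$, $\frac12\overline\H(2k_0,2)$, $\bar\J(2k,k)$ and $\O(k)$, use inheritance (Lemma~\ref{lem32}). An $r$-geodesic of $\frac12\Ga$ lifts to a $2r$-geodesic of $\Ga$, and any automorphism carrying one lift to another preserves the bipartition. Likewise, geodesics of a folded graph lift to geodesics of the antipodal cover. Half dual polar graphs then follow at once from $\DD(\Omega^+(2\o,q))$.

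\emph{Forms graphs.} Avoid stepwise extensions and work with one antipodal pair. For $\BF(m,k,q)$ take $X=0$ and $Y=(I_m\ 0)$. Geodesics from $X$ to $Y$ correspond to a maximal flag of $U$ together with a flag of complements; there are $\prod_i q^{i-1}[\stack{i}{1}]=c_1\cdots c_m$ of these. The stabilizer of $(X,Y)$ contains $\{(A,{\rm diag}((A^{\sf T})^{-1},I))\}\cong\GL(U)$, which is transitive on such flag pairs. For $\AF(k,q)$ and $\HF(k,r)$ the analogous data is a chain of subspaces on which $Y$ is non-degenerate, and transitivity comes from Witt's lemma for $(V',Y)$.

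\emph{Thick Moufang polygons.} Flag-transitivity is not what is needed. You need the stabilizer of an $(n-2)$-path $(u_0,\dots,u_{n-2})$ in ${\rm I}(\Ga)$ to be transitive on its two-step extensions $(u_0,\dots,u_{n-2},x,y)$. The paper obtains this by composing the elation groups of two overlapping $(n-2)$-paths.
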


\begin{table}[hbt]
\renewcommand{\arraystretch}{1.2}
\newcommand{\tabincell}[2]{\begin{tabular}{@{}#1@{}}#2\end{tabular}}
\caption{{Geodesic-transitive graphs}}\label{tab-main}
\centering\scalebox{0.85}{
\begin{tabular}{lcccc}
	\toprule[1pt]
	Name & $\Ga$ & $ \d_\Ga $ (resp.) & Constructed on & \S \\
	\midrule[0.5pt]
	
	Johnson family & \tabincell{c}{${\J}(n,k) $,\, $ \bar \J(2k,k) $, \\ $ \mathcal{O}(k) $,\, $ {\O}(k) $} & \tabincell{c}{$ k $,\, $\lfloor k/2\rfloor$, \\ $2k-1$,\, $ k-1 $} & \tabincell{c}{set \\ (abstract)} & \ref{sec-set} \\
	\midrule[0.5pt]
			
	Grassmann family & $ {\G}(n,k,q) $,\, $ \mathcal{G}(k,q) $ & $ k $,\, $2k-1$ & projective space & \ref{sec-prospace}\\
	\midrule[0.5pt]
			
	\tabincell{l}{Incidence graph of \\ design {\sf D} or opposites {\sf D}$ ^{op} $ } & $ {\rm I}({\sf D}) $,\, $ {\rm I}(\D^{op\,(w,H)}) $ & $ 3 $, $ 4 $ & projective space & \ref{sec-prospace}\\
    \midrule[0.5pt]
			
	(half) Dual polar graph & $ {\DD}(W) $,\,  $\frac{1}{2}\DD(\Omega^+(2\o,q))$ & $ \o $,\, $ \lfloor\o/2\rfloor $ & polar space & \ref{sec-polarspace} \\
	\midrule[0.5pt]
			
	Hamming family & \tabincell{c}{$ {\H}(k,m) $,\, $ \frac{1}{2}{\H}(k,2) $ \\ $ \overline \H(k,2) $,\, $ \frac{1}{2}\overline \H(2k_0,2) $} & \tabincell{c}{$ k $,\, $ \lfloor k/2\rfloor $ \\ $ \lfloor k/2\rfloor $,\, $ \lfloor k_0/2\rfloor $} & tuples & \ref{sec-tuples}\\
	\midrule[0.5pt]
			
	Forms graph & \tabincell{c}{${\BF}(m,k,q) $,\\ $ {\AF}(k,q) $,\, $ {\HF}(k,r) $} & \tabincell{c}{$ \min\,\{m,k\} $,\\ $ \lfloor k/2\rfloor $,\, $ k $} & matrix space & \ref{sec-matirx} \\ 
	\midrule[0.5pt]
			
	\tabincell{l}{Generalized $ n $-gon \\ ($ n\geqslant 6 $)} & \tabincell{c}{six families \\ and their duals \\ } & $ 3$, $ 4 $ or $ 6 $ & \tabincell{c}{certain group \\ of Lie type} & \ref{sec-gp}\\
	\midrule[0.5pt]
			
	Cycles & $ {\rm C}_k $ & $ \lfloor k/2\rfloor $ & -  & - \\
	\bottomrule[1pt]	
\end{tabular}}
\end{table}

\begin{table}[htb]
	\renewcommand{\arraystretch}{1.2}
	\newcommand{\tabincell}[2]{\begin{tabular}{@{}#1@{}}#2\end{tabular}}
	\centering
	\caption{Remaining distance-transitive graphs (known)}\label{tab-remaining}
	\scalebox{0.85}{
		\begin{tabular}{l|c|c|c}
			\toprule[1pt]
			$ \Ga $  & $ \d_\Ga $ & \tabincell{c}{Geodesic\,- \\ transitivity} & Reference \\
			\midrule[1pt]
			
			Complete multipartite graph $ \K_{r,\ldots,r} $, $r\geqslant 2$ & $ 2 $ & $ \surd $ & - \\
			\midrule[0.5pt]
			
			Complete graph $ \K_n $, $n\geqslant 1$ & $ 1 $ & $ \surd $ & - \\
			\midrule[0.5pt]
			\midrule[0.5pt]
			
			Taylor graphs of unitary type $ {\rm T_1}(q) $, $ {\rm T_2}(q) $ & $ 3 $ & $\times$ & Sect.\,\ref{sec-nonGTG}\\
			\midrule[0.5pt]
			
			\tabincell{l}{Paley graphs {\rm P}$(q)$, $ q>9 $, \\ Peisert graphs {\rm Pei}$(q)$, $ q>9 $} & $ 2 $ & $\times$ & \cite[Thm.\,1.2]{GTG}\\
			\midrule[0.5pt]
			
			Some sporadic graphs & $ 3$, $4$ or $7 $ & $\times$ & Sect.\,\ref{sec-nonGTG}\\
			\midrule[0.5pt]
			\midrule[0.5pt]
			
			\tabincell{l}{Antipodal covers of $ \K_{r,r} $, $r\geqslant 3$ \\ (other than $ {\rm I}(\D^{op\,(w,H)}) $ in Table \ref{tab-main})} & $ 4 $ & ? & \cite{AKnn}\\
			\midrule[0.5pt]
			
			${\rm E}_{7,7}(q) $-graphs, affine $ {\rm E}_6(q) $-graphs & $ 3 $ & ? & \cite[\S\,10.\,7\,-\,8]{DRG-book}\\
			\midrule[0.5pt]
			
			\tabincell{l}{Incidence graphs of certain designs \\ (other than $ {\rm I}({\sf D}) $ in Table \ref{tab-main})} & $ 3 $ & ? & \cite[Thm.\,3.2]{ABd1}\\
			\midrule[0.5pt]
			
			\tabincell{l}{Antipodal covers of $ \K_{n} $,  $n\geqslant 4 $ \\ (other than $ {\rm T_1}(q) $, $ {\rm T_2}(q) $)} & $ 3 $ & ? & \cite{AKn}\\
			\midrule[0.5pt]
			
			\tabincell{l}{Remaining infinite families of \\ (primitive) rank-$ 3 $ graphs}  & $ 2 $ & ? & \cite{SRG-book}\\	
			\midrule[0.5pt]
			
			Remaining sporadic graphs & $ 2\leqslant\d_\Ga\leqslant8 $ & ? & \cite{Ad2,Bd2,ABd3}\\
			\bottomrule[1pt]
	\end{tabular}}
\end{table}

On the other hand, examples of distance-transitive graphs that are not geodesic-transitive do exist and it is interesting to find them. The first two examples known to us are the infinite families of Paley and Peisert graphs \cite{GTG}. These graphs are with diameter $ 2 $. In Section \ref{sec-nonGTG} we provide more examples which are with diameter $ 3 $, $ 4 $ or $ 7 $, including two infinite diameter 3 families. We have not yet found examples with diameter $ 5 $, $ 6 $ or $ 8 $. This may be advanced by checking computationally the remaining sporadic graphs in Table \ref{tab-remaining}.

\begin{propo}
There exist graphs with diameter $ \d\in\{2,3,4,7\} $ that are distance-transitive but not geodesic-transitive{\rm\,;} in particular, there are infinitely many with $ \d=2 $ or $ 3 $.
\end{propo}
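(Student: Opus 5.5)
The proposition is proved by exhibiting examples, one diameter at a time. The basic criterion is as follows. Let $\Ga$ be $G$-distance-transitive with $G=\Aut(\Ga)$, fix a vertex $X$, and take $2\leqslant i\leqslant \d$. The graph $\Ga$ is geodesic-transitive exactly when, for every such $i$ and every vertex $Y$ with $\d(X,Y)=i$, the stabilizer $G_{X,Y}$ acts transitively on the set of geodesics from $X$ to $Y$. A necessary condition is transitivity on the $c_i$ neighbours of $Y$ lying in $\Gamma_{i-1}(X)$. So in each case I will find some $i$ and a pair $(X,Y)$ at distance $i$ for which $G_{X,Y}$ has at least two orbits on the $(i-1)$-geodesics from $X$ to $Y$. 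For $i=2$ this is simply the statement that $G_{X,Y}$ is not transitive on the $c_2$ common neighbours.

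\emph{Diameter $2$.} This case, including the infinitely many examples, is already covered: by \cite[Thm.\,1.2]{GTG}, the Paley graphs ${\rm P}(q)$ and Peisert graphs ${\rm Pei}(q)$ with $q>9$ are distance-transitive (rank $3$) but not geodesic-transitive. I will just cite this.

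\emph{Diameter $3$, infinite families.} I will use the unitary Taylor graphs ${\rm T}_1(q)$ and ${\rm T}_2(q)$. These are antipodal double covers of $\K_n$ with $n=q^3+1$, on which $\PGaU(3,q)$ (or a related group) acts distance-transitively. Take $X$ and its antipode $\bar X$, which are at distance $3$. The stabilizer $G_{X,\bar X}=G_X$ is the point stabilizer, a Borel-type subgroup of order roughly $q^3(q^2-1)\cdot|{\rm Out}|$. The $3$-geodesics from $X$ to $\bar X$ are paths $X,U,V,\bar X$ with $U\in\Gamma(X)$ and $V\in\Gamma(U)\cap\Gamma_2(X)$. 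Their number is $k\cdot b_1$, which is of order $q^3\cdot q^3$. This exceeds $|G_X|$ for large $q$, so a counting argument already rules out transitivity. Alternatively, and more concretely, $G_{X,U}$ is too small (order about $q^2\cdot|{\rm Out}|$) to be transitive on the $b_1\approx q^3/2$ vertices $V$. The step to check carefully is that the automorphism group is no larger than the group I am counting with. For this I will quote the known full automorphism groups of Taylor graphs from \cite{DRG-book}. Small $q$ will need a separate computation or exclusion.

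\emph{Sporadic examples with $\d=3,4,7$.} I will pick distance-transitive graphs from the sporadic list in \cite{DRG-book,ABd3}, for instance a diameter-$4$ example and the diameter-$7$ Foster-type graph. For each I will compute $|\Aut(\Ga)|$, and count the number of $\d$-geodesics starting at a vertex, namely $k\,b_1b_2\cdots b_{i-1}$ for $i$-geodesics. Whenever this count does not divide $|\Aut(\Ga)|$, geodesic-transitivity fails for that $i$. If divisibility does not settle a case, I will instead verify non-transitivity of $G_{X,Y}$ on the $c_i$ predecessors directly, using a computer algebra system such as GAP/Magma. The main obstacle is choosing suitable sporadic graphs and certifying the full automorphism group in each. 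For the infinite Taylor families, the obstacle is ensuring the counting inequality holds uniformly in $q$.
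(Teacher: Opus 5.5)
There is a genuine gap, in the cases $\d=4$ and $\d=7$.

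\textbf{What matches the paper.} Your $\d=2$ citation is exactly what the paper does. Your Taylor-graph argument for $\d=3$ is essentially the paper's argument. Comparing $k\mu$ with $|G_X|=2fq^3(q^2-1)$ is the same as the paper's comparison of $\mathcal{L}_3=vk\mu$ with $|\Aut\,{\rm T_i}(q)|=4fq^3(q^3+1)(q^2-1)$. Both reduce to $q+1>4f$ and $q^2+1>4f(q+1)$, which the paper checks for every odd $q=p^f>3$. Your asymptotic version already gives infinitely many examples, which is all the statement needs.

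\textbf{The gap.} The statement also asserts examples with $\d=4$ and $\d=7$. These can only be established by exhibiting specific graphs, and you do not do this. You name no diameter-$4$ graph. The one candidate you do name does not work. The Foster graph has diameter $8$, not $7$, and it is in fact geodesic-transitive: it is $5$-arc-transitive of girth $10$ with $b_5=b_6=b_7=1$, so every geodesic is an arc or is determined by its initial $5$-arc. The natural cubic diameter-$7$ graph, the Biggs--Smith graph, is geodesic-transitive for the same reason: it is $4$-arc-transitive of girth $9$ with $b_4=b_5=b_6=1$. No computation will certify either one as a counterexample. The paper's examples are:
\begin{itemize}
\item for $\d=4$, the $280$-vertex graph induced by $\HF(3,2)$ on the antipodes of a vertex, with array $\{9,8,6,3;1,1,3,8\}$ and $\Aut=\PGaL(3,4).2$;
\item also for $\d=4$, the Patterson graph of ${\sf Suz}$;
\item for $\d=7$, the bipartite double of the coset graph $[2^{10}.\M_{22}.2]$ of the truncated binary Golay code, with $v=2^{11}$ and $|\Aut|=2^{19}\cdot3^2\cdot5\cdot7\cdot11$.
\end{itemize}

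\textbf{Your divisibility test is too weak as stated.} If $\Aut\,\Ga$ is transitive on $i$-geodesics, then the total number $v\,b_0b_1\cdots b_{i-1}$ must divide $|\Aut\,\Ga|$. Equivalently, $b_0\cdots b_{i-1}$ must divide the order of a vertex stabilizer. You instead compare the per-vertex count $b_0\cdots b_{i-1}$ with $|\Aut\,\Ga|$ itself, which throws away the factor $v$. Under your version, both of the paper's hardest examples pass:
\begin{itemize}
\item Patterson: $280\cdot243\cdot144\cdot10=2^8\cdot3^7\cdot5^2\cdot7$ divides $|{\sf Suz}.2|$;
\item Golay double: $2^9\cdot3^2\cdot5\cdot7\cdot11$ divides $2^{19}\cdot3^2\cdot5\cdot7\cdot11$.
\end{itemize}
With the factor $v$ included, the counts become $2^{13}\cdot3^7\cdot5^3\cdot7\cdot11\cdot13$ and $2^{20}\cdot3^2\cdot5\cdot7\cdot11$, and neither divides the corresponding group order. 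Your fallback, checking by computer that $G_{X,Y}$ is not transitive on the $c_i$ predecessors, is only a sufficient condition. Like the rest of your sporadic part, it is a plan rather than a proof.
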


In Section \ref{sec-pgg} we study polar Grassmann graphs which extend Grassmann graphs and dual polar graphs. These graphs have attracted many interest and exhibit close connections with many other objects, say polar Grassmannians and Tit's buildings of classical types (e.g., see \cite{pgg1,pgg2}). The second main result is as follows.

\begin{theo}\label{thm-pg}
	Let $ \Ga=\PG_{W}(k) $ be a polar Grassmann graph, $ 1\leqslant k\leqslant \o $. Then the following statements are equivalent. 
	\begin{itemize}
		\item[\rm(i)] $ \Ga $ is geodesic-transitive, \vs
		\item[\rm(ii)] $ \Ga $ is distance-regular,\vs
		\item[\rm(iii)] $ \Ga $ is a polar graph $ (k=1) $, or a dual polar graph $ (k=\o) $.
	\end{itemize}
Moreover, the geodesics of $ \Ga $ are described in {\rm Proposition \ref{prop-pg-geo}}, while their $ \Aut\,\Ga $-orbits are given in {\rm Propositions \ref{prop-pg-op}\,-\,\ref{prop-pg-orbitsnum}}.
\end{theo}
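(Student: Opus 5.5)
The plan is to prove (i)$\Rightarrow$(ii)$\Rightarrow$(iii)$\Rightarrow$(i). I fix the setup as follows. $W$ is a nondegenerate polar space of rank $\o$ over $\mathbb F_q$. The vertices of $\PG_W(k)$ are the totally isotropic (t.i.) $k$-subspaces, and two vertices $X,Y$ are adjacent when $\dim(X\cap Y)=k-1$ and $X+Y$ is totally isotropic, i.e.\ $X$ and $Y$ lie in a common t.i.\ $(k+1)$-space. For $k=\o$ the last condition is dropped: adjacency is $\dim(X\cap Y)=\o-1$, which gives the dual polar graph.

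The implication (i)$\Rightarrow$(ii) is immediate. A geodesic-transitive graph is distance-transitive, because every pair at distance $i$ is the pair of endpoints of some $i$-geodesic. A distance-transitive graph is distance-regular.

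For (iii)$\Rightarrow$(i), take $k=\o$ first. The dual polar graph $\DD(W)$ appears in Table \ref{tab-main}, so Theorem \ref{thm-gtg} applies. The case $k=1$ is the polar graph: vertices are the singular points and adjacency is orthogonality. It has diameter $2$, so I only need transitivity on $2$-geodesics $(x,y,z)$, where $x\perp y\perp z$ and $x\not\perp z$. By Witt's theorem the isometry group is transitive on hyperbolic pairs $(x,z)$. The stabiliser of such a pair acts on $\langle x,z\rangle^\perp$, a nondegenerate polar space of rank $\o-1$. It is transitive on the singular points of that space, which are exactly the common neighbours $y$ of $x$ and $z$. (When $\o=1$ the graph is empty, and I will exclude or treat that degenerate case separately.)

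The real content is (ii)$\Rightarrow$(iii), so for $1<k<\o$ I must show that $\PG_W(k)$ is not distance-regular. I will exhibit two vertex pairs $(X,Y)$ and $(X,Y')$ at distance $2$ with different values of $c_2$. The natural choices are:
\begin{itemize}
\item[(a)] $\dim(X\cap Y)=k-1$ but $X+Y$ not t.i. This requires $k<\o$ and should be possible when $k\geqslant1$. Here $X+Y$ is a $(k+1)$-space whose radical is $X\cap Y$. A common neighbour $Z$ must contain a $(k-1)$-space inside both $X$ and $Y$ and be perpendicular to both, which forces $Z\supseteq X\cap Y$ and $Z\subseteq (X+Y)^\perp$. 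So $c_2$ counts the t.i.\ $k$-spaces through $X\cap Y$ in $(X+Y)^\perp$. This should equal the number of singular points of a rank $\o-k$ polar space, namely of $(X+Y)^\perp/(X\cap Y)$.
\item[(b)] $\dim(X\cap Y')=k-2$ and $X+Y'$ t.i. of dimension $k+2$. Here the common neighbours $Z$ are $k$-spaces with $X\cap Y'\subset Z\subset X+Y'$ meeting $X$ and $Y'$ in dimension $k-1$. This is the Grassmann count $c_2=(q+1)^2$.
\end{itemize}
In case (a) the number of points is $(q^{\o-k}-1)(q^{\o-k+e}+1)/(q-1)$, where $e$ depends on the type. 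I then compare it with $(q+1)^2$. Before trusting these counts I will verify that both pairs are genuinely at distance $2$, and I will check that the two values cannot coincide for any admissible $q,\o,k,e$. This comparison is the step I expect to be the main obstacle: small parameters may give equality, for example $\o-k=1$ with suitable $e$. If that happens, I will switch to comparing $b_1$ or $a_1$ across two edge types instead. Vertex-transitivity alone does not make all edges look alike, but adjacency here is a single Witt orbit. So I would more likely compare $a_2$, or use a third distance-$2$ type in which $\dim(X\cap Y)=k-1$ and $X+Y$ is t.i., which is precisely adjacency and therefore not a distance-$2$ pair. Hence the two types (a) and (b) are the relevant ones, and I will resolve any coincidence by a direct check of the few remaining parameter sets.

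Finally, I will defer the descriptions of geodesics and orbits to Propositions \ref{prop-pg-geo}--\ref{prop-pg-orbitsnum}.
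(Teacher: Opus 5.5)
Your arguments for (i)$\Rightarrow$(ii) and (iii)$\Rightarrow$(i) are fine and match the paper. The step (ii)$\Rightarrow$(iii), however, has a genuine gap: comparing $c_2$ for your pairs (a) and (b) does not cover all $1<k<\omega$.

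\begin{itemize}
\item Pair (b) needs a totally isotropic $(k+2)$-space, so it exists only when $k\le\omega-2$. For $k=\omega-1$ (e.g.\ $\omega=3$, $k=2$) there is nothing to compare (a) with.
\item For $W=\Omega^+(2\omega,q)$ and $k=\omega-2$, the space $(X+Y)^\perp/(X\cap Y)$ in case (a) is a hyperbolic quadric of $\mathrm{PG}(3,q)$. It has exactly $(q+1)^2$ points, so $c_2=(q+1)^2$ for both (a) and (b). This is an infinite family of coincidences, not a few parameter sets to check by hand, and no computation of $c_2$ will separate these two pairs.
\item Your fallback does not repair this. $a_1$ and $b_1$ are constant because $G_W$ is edge-transitive, as you note yourself. $a_2$ is never computed. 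Your claim that (a) and (b) are the only relevant distance-$2$ configurations is false.
\end{itemize}

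The missing idea is a third $G_W$-orbit of distance-$2$ pairs. Take $\dim(X\cap Y)=k-2$ with $B$ inducing a pairing of rank $1$ between $X/(X\cap Y)$ and $Y/(X\cap Y)$. In a polar frame this is $X=\langle x_1,\dots,x_k\rangle$, $Y=\langle x_1,\dots,x_{k-2},y_k,y_{k+1}\rangle$, which exists for every $2\le k\le\omega-1$. Since $x_{k-1}\in X\setminus Y$ is perpendicular to $Y$, the pair is not opposite, so it is at distance $2$.

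A common neighbour $Z$ must satisfy $Z\le Y^\perp$ and $Z\le X^\perp$. Hence $Z\cap X\le X\cap Y^\perp=\langle x_1,\dots,x_{k-1}\rangle$ and $Z\cap Y\le Y\cap X^\perp=\langle x_1,\dots,x_{k-2},y_{k+1}\rangle$. Both right-hand sides have dimension $k-1$, which forces $Z=\langle x_1,\dots,x_{k-1},y_{k+1}\rangle$, so $c_2=1$.

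In case (a), by contrast, $c_2\ge 2$, because a nondegenerate polar space of rank $\omega-k\ge 1$ has at least two points. So comparing (a) with this third pair works uniformly for all $1<k<\omega$.

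The paper uses exactly these two pairs (its $X_1$ and $X_2$) but compares the outward behaviour instead. Every neighbour of $X_1$ stays within distance $2$ of $X$, while $X_2$ has the neighbour $\langle x_1,\dots,x_{k-2},y_{k-1},y_k\rangle$ at distance $3$ from $X$.
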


\section{Distance-transitive graphs}\label{sec-dtg}

We review the nearly complete classification project for finite distance-transitive graphs and compile the list (Table \ref{tab-main}\,-\,\ref{tab-remaining}) of all known graphs in Section \ref{sec-pri}\,-\,\ref{sec-collction}. This list is already very close to being the final, complete version. We provide several examples of non-geodesic-transitive graphs in Section \ref{sec-nonGTG}.\vs

\subsection{\it Primitivity and imprimitivity}\label{sec-pri}{~}\vskip0.03in

Let $\Ga$ be a graph with diameter $ \d $. The \textit{distance-$ i $-graph} $ \Ga^{(i)} $ of $ \Ga $ is defined on the same vertex set where two vertices are adjacent whenever they are at distance $ i $ in $ \Ga $. If $ \Ga^{(i)} $ is connected for each $ 1\leqslant i\leqslant \d $, then $ \Ga $ is called \textit{primitive}\,; otherwise, it is called \textit{imprimitive}. These notions are closely related to primitive permutation groups. Let $ G $ be an automorphism group of $ \Ga $. If $ \Ga $ is imprimitive where some $ \Ga^{(i)} $ is unconnected, then the connected components of $ \Ga^{(i)} $ give rise to an imprimitive block system of the action of $ G $ on vertices. Conversely, if $ G $ is imprimitive on the set of vertices, with some imprimitive block $ B $ which contains vertices say $ u $, $ v $ at distance $ i $.  Suppose further that  $ G $ is distance-transitive on $ \Ga $. Then the stabilizer $ G_u $ is transitive on the set $ \Ga_i(u) $ of vertices at distance $ i $ with $ u $. It then follows that $ \Ga_i(u) $, and so the connected component of $ \Ga^{(i)} $ that contains $ u $, is included in $ B $. Thus $ \Ga^{(i)} $ is unconnected and $ \Ga $ is imprimitive. In conclusion, we have the following result.

\begin{lemma}
Let $ \Ga $ be a $ G $-distance-transitive graph. Then $ G $ is primitive (on vertices) if and only if $ \Ga $ is primitive.
\end{lemma}

Every connected bipartite graph $ \Ga $ with diameter $ \d\geqslant 2 $ is imprimitive. The graphs induced by $ \Ga^{(2)} $ on its two connected components are called \textit{halved graphs} of $ \Ga $, denoted by $ \Ga^+ $ and $ \Ga^- $ (or $\frac{1}{2}\Ga$ for an arbitrary one). The graph $\frac{1}{2}\Ga$ has diameter $\lfloor \d/2\rfloor$. In some literature (e.g.,\cite{Bd2}), $ \Ga $ is called a \textit{$ B $-double} of $\frac{1}{2}\Ga$.\vs

Every \textit{antipodal graph} $ \Ga $ with diameter $ \d\geqslant 2 $  is imprimitive, since by definition the graph $ \Ga^{(\d)} $ is a disjoint union of some cliques. The antipodal quotient $ \overline\Ga $ is usually called a \textit{folded graph}, which has diameter $\lfloor \d/2\rfloor$. In the case where $ \d\geqslant3 $, $ \Ga $ and $ \overline\Ga $ have the same valency, and $ \Ga $ is called an \textit{antipodal ($ r $-) cover} of $ \overline\Ga $.\vs

\subsection{\it The compilation of \,{\rm Tables \ref{tab-main}\,-\,\ref{tab-remaining}}}\label{sec-collction}{~}\vskip0.03in

As a consequence of the famous theorem by Derek Smith \cite{Smith-reduction} (1971), the classification project for distance-transitive graphs naturally breaks into two parts\,: primitive and imprimitive. Each part has been studied in rich detail across numerous references. Here, we primarily review some recent, summarizing works, thereby avoiding the need to cite a large body of literature (which can be found in the references of these works).\vs
 
A clear and detailed version of Smith's Theorem with complete proof was given by Alfuraidan and Hall in \cite[Thm.\,2.9]{ABd1}, which set up a classification scheme. We restate this result in a concise form as follows\,:

\begin{pro}{\rm(\cite[Theorem~2.9]{ABd1})}\label{pro22}
Let $ \Ga $ be a distance-transitive graph of diameter $ \d $ and valency $ k $. Then $ \Ga $ belongs to one of the following twelve classes{\rm\,:}\vs

{\rm (i)} the generic case that consists of primitive graphs with $ \d\geqslant2 $ and $ k\geqslant 3\,; $\vs

{\rm(ii)} four canonical imprimitive cases that reduce to the generic case after halving at most once and folding at most once {\rm(}graphs in these cases are with $ \d\geqslant 4)\,; $\vs

{\rm(iii)} seven exceptional cases in which the graphs are all known, that is, either $ \Ga $ is a cycle $ {\rm C}_n\,(n\geqslant 3) $, a complete graph $ \K_n\,(n\geqslant 1) $, a complete multipartite graph $ \K_{r,\ldots,r}\,(r\geqslant 2) $, or one of the followings holds{\rm \,:}\vs

{\rm(iii.1)} $ \Ga $ is the incidence graph of a nontrivial self-dual (symmetric) $ 2 $-transitive design, $ \d=3 $ {\rm (}refer to \cite[Thm.\,3.2]{ABd1}{\rm )\,;}\vskip0.03in

{\rm(iii.2)} $ \Ga $ is an antipodal cover of $ \K_{n}\,(n\geqslant 4) $, $ \d=3 $ {\rm(}refer to~\cite{AKn}{\rm )\,;}\vskip0.03in

{\rm(iii.3)} $ \Ga $ is an antipodal cover of $ \K_{r,r}\,(r\geqslant 3)$, $ \d=4 $ {\rm(}refer to \cite{AKnn}{\rm )\,;}\vskip0.03in

{\rm(iii.4)} $ \Ga $ is the $ 6 $-cube $\H(6,2) $, $ \d=6 $ {\rm(}refer to \cite[Thm.\,3.3]{ABd1}{\rm ).}
\end{pro}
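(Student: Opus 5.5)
This proposition is a restatement of \cite[Theorem~2.9]{ABd1}, i.e.\ of Smith's reduction theorem \cite{Smith-reduction}. The plan is to follow Smith's argument and then sort the leftover low-diameter and low-valency situations into the exceptional list.

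First, a distance-transitive graph $\Ga$ of diameter $\d$ and valency $k$ is either primitive or imprimitive. In the primitive case, either $\d\geqslant2$ and $k\geqslant3$, which is the generic case (i), or we are in a degenerate case. If $\d=1$, then $\Ga=\K_n$. If $k\leqslant2$, then $\Ga$ is a cycle (or $\K_1$, $\K_2$), and among these only the cycles of prime length are primitive.

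In the imprimitive case, we invoke the Lemma above. An automorphism group $G$ acting distance-transitively is imprimitive exactly when some distance graph $\Ga^{(i)}$ is disconnected. The key step is Smith's dichotomy: if $k\geqslant3$ and $\Gamma^{(i)}$ is disconnected for some $i$, then $\Ga$ is bipartite or antipodal, and possibly both. I would prove this with the standard argument. Let $B$ be the block containing a vertex $u$. Distance-transitivity forces $B$ to be a union of spheres $\Ga_j(u)$ for $j$ in some set $J$, with $J$ closed under the ``distance sums'' induced by the intersection array. Analysing which index sets $J$ can be closed in this way leaves two possibilities: $J$ is the set of even indices, giving the bipartite case, or $J=\{0,\d\}$, giving the antipodal case. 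This uses the monotonicity $b_i\geqslant c_j$ for $i+j\leqslant \d$ and the fact that $k\geqslant3$.

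Next, I would show that halving a bipartite graph and folding an antipodal graph each preserve distance-transitivity, with the group acting on the blocks. The quotient has diameter $\lfloor \d/2\rfloor$. Iterating, one halves at most once and folds at most once before reaching a primitive graph; the two operations commute in the appropriate sense. This gives the four canonical imprimitive cases: bipartite, antipodal, and the two orders of combining them. These are the canonical cases when the reduced graph lands in the generic case (i).

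Finally, one must handle reductions that land in a degenerate primitive graph, $\K_n$ or $\K_{r,\ldots,r}$, or in diameter $\leqslant 2$. There are four such outcomes:
\begin{itemize}
\item halving a bipartite graph of diameter $3$ to get $\K_n$, which gives the incidence graph of a symmetric $2$-transitive design (iii.1);
\item folding a diameter-$3$ antipodal graph to get $\K_n$, which gives (iii.2);
\item a bipartite antipodal graph of diameter $4$ with quotient $\K_{r,r}$, which gives (iii.3);
\item the complete multipartite graphs themselves, which are antipodal of diameter $2$.
\end{itemize}
The $6$-cube arises as the one case where halving and folding interact anomalously, its reduction yielding $\K_{16}$-type degeneracy, so it must be listed separately as (iii.4).

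I expect the main obstacle to be this final bookkeeping. One has to show that these are exactly the exceptions, and in particular pin down the isolated exception $\H(6,2)$, which requires a careful case analysis of the diameters $\d\leqslant 6$ where the halved or folded graph becomes complete.
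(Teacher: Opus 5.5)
The paper gives no proof of its own. The proposition is quoted from \cite[Thm.~2.9]{ABd1}, and class (iii.4) rests on the separate characterization \cite[Thm.~3.3]{ABd1}. Your outline has the same architecture as that source (Smith's dichotomy, then halving and folding), and your Smith step is the standard one. However, the substantive content of class (iii.4) is missing.

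For a bipartite antipodal distance-transitive graph with $\d=6$, diameter bookkeeping only tells you three things: $\Gamma^+$ is a distance-transitive antipodal cover of a complete graph, $\overline{\Gamma}$ is the incidence graph of a symmetric design, and $\overline{\Gamma^+}=(\overline{\Gamma})^+$ is complete. So all such graphs fall outside the generic scheme. The proposition asserts that the only such graph is $\H(6,2)$, and your plan contains no argument for this. Bipartiteness and antipodality by themselves only force the intersection array into the shape $\{k,k-1,k-c,k-k/r,c,1;1,c,k/r,k-c,k-1,k\}$, where $c=c_2$ and $r$ is the size of an antipodal class. Getting from there to $(k,c,r)=(6,2,2)$ is a genuine classification result, which is why it is a separate theorem in \cite{ABd1}, not a ``careful case analysis of diameters''. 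Also, $\H(6,2)$ is not a case where halving and folding `interact anomalously'. They commute there exactly as for every even $\d$; the only special feature is that $\lfloor\lfloor 6/2\rfloor/2\rfloor=1$.

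Second, ``iterating, one halves at most once and folds at most once'' is exactly what has to be proved, and it is not automatic: the folded $4$-cube is $\K_{4,4}$, which is again antipodal. The lemmas you need are:
(a) $\Gamma^+$ contains $k$-cliques, so it is never bipartite.
(b) Folding is a graph homomorphism, so if $\overline{\Gamma}$ is bipartite then $\Gamma$ is bipartite and $\d$ is even.
(c) If $\Gamma^+$ is antipodal of diameter $\geqslant 2$, then $\d$ is even and $\Gamma$ is antipodal. For odd $\d$, take $x'\in\Gamma_{\d-1}(x)$ and a neighbour $z\in\Gamma_{\d}(x)$ of $x'$. The other neighbours of $z$ lie at distance $\d-1$ from $x$, while antipodality of $\Gamma^+$ would put them at distance $\d-3$.
(d) If $\d\geqslant 4$ and $\overline{\Gamma}$ is antipodal, then $\d=4$ and $\Gamma$ is bipartite. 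To see this, lift the antipodal classes of $\overline{\Gamma}$ and run your closed-set analysis on $J=\{0,\lfloor\d/2\rfloor,\lceil\d/2\rceil,\d\}$.

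Only with these do you get that, for imprimitive $\Gamma$ with $k\geqslant 3$, the reduction misses the generic case exactly when $\d\leqslant 3$ or $\Gamma$ is bipartite antipodal with $\d\in\{4,6\}$. They also yield the four canonical classes:
\begin{itemize}
\item bipartite but not antipodal ($\d\geqslant 4$);
\item antipodal but not bipartite ($\d\geqslant 4$);
\item bipartite and antipodal with $\d$ odd ($\d\geqslant 5$), where either operation alone already reaches the generic case;
\item bipartite and antipodal with $\d$ even ($\d\geqslant 8$), where both operations are needed.
\end{itemize}
Your list ``bipartite, antipodal, and the two orders of combining them'' is not this. By your own commutation remark the two orders give a single class, and the odd-$\d$ bipartite antipodal graphs are left unaccounted for.
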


The generic case (i) is further divided into two subcases\,: $ \d=2 $ and $ \d\geqslant 3 $. The graphs with diameter $ \d=2 $ are exactly the primitive \textit{rank\,-$ 3 $~graphs} which are arisen from primitive groups with permutation rank $ 3 $. These groups have been completely classified over a long sequence of papers \cite{rank3-1,rank3-2,rank3-3,rank3-4}, while all such graphs can be found in book \cite{SRG-book}. On the other hand, the classification of graphs with diameter $ \d\geqslant 3 $ is NOT yet complete but is very close. The status is surveyed by J. van Bon \cite{pri-DTG2}, where the only unresolved portion has been reduced to specific problems concerning certain simple groups of Lie type.\vs

For the canonical imprimitive cases (ii), the latest advances can be found in a series of summary works \cite{Ad2}\,-\,\cite{im-add} by M.R. Alfuraidan et al. Specifically, the results are presented in three tables\,: \cite[Table 1]{Ad2}, \cite[Table 1]{Bd2} (for $ \d=2 $), and \cite[Table I\,]{ABd3} (for $ \d\geqslant 3 $), which list all known graphs in the generic case (i) and the associated imprimitive distance-transitive graphs they might have\footnote{~It appears that a few graphs with diameter $ 2 $ have been omitted from these tables, say the Paley and the Peisert graphs.}.\vs

Therefore, the list of all known distance-transitive graphs should be the sum of the above three tables, along with the graphs from the exceptional cases (iii). To avoid excessive length, we provide a summary of these tables rather than their full content. The graphs in \cite[Table I\,]{ABd3} are classified as follows\,:\vs
\begin{itemize}
	\item[(1)] the infinite families, including the Hamming, Johnson, Grassmann families, the (half) dual polar graphs, the forms graphs, the generalized polygons, the ${\rm E}_{7,7}(q) $-graphs, and the affine $ {\rm E}_6(q) $-graphs\,;\vskip0.03in
	
	\item[(2)] the sporadic graphs, including about $ 20 $ primitive ones and $ 6 $ imprimitive ones, among which the one with the largest diameter is the Faradjev–Ivanov–Ivanov graph with diameter $ 8 $ (whose folded graph is the Doubly Truncated Witt Graph $[\M _{22}.2] $)\,;\vs
\end{itemize}

\noindent meanwhile, the graphs in \cite[Table 1]{Ad2} and \cite[Table 1]{Bd2} are classified as follows\,:\vs

\begin{itemize}
	\item[(3)] the infinite families which can be integrated into the above class (1)\,; (Note\,: the only infinite families in these two tables which have associated imprimitive graphs ($ B $-doubles) are triangular graphs, Grassmann graphs and half dual polar graphs. Their $ B $-doubles, i.e., doubled odd/Grassmann graphs and dual polar graphs, already appear in class (1).)\vskip0.03in
	
	\item[(4)] the remaining infinite families of primitive rank-$ 3 $ graphs, say Latin squares, polar graphs, $ {\rm E}_{6,1} $-graphs and so on\,;\vskip0.03in
	
	\item[(5)] the sporadic graphs, including over 60 primitive ones and 20 imprimitive ones, among which the one with the largest diameter is the Foster graph with diameter $ 8 $ (whose halved graph is an antipodal $ 3 $-cover of $\overline{\rm T}(6)$).\vs
\end{itemize}

Finally, we collect the graphs from Proposition \ref{pro22} (iii) and those from the above classes (1)\,-\,(5) in Tables \ref{tab-main}\,-\,\ref{tab-remaining}.\vs

\subsection{\it Examples of non-geodesic-transitive graphs}\label{sec-nonGTG}{~}\vskip0.03in

The first two examples of distance-transitive graphs that are not geodesic-transitive were provided in \cite{GTG}\,: the infinite families of Paley and Peisert graphs (with three exceptions excluded). These graphs are with diameter $ 2 $. In this part, we provide two infinite families with diameter $ 3 $ (Example \ref{exam-inf}) and a few sporadic graphs with diameter $ 3 $, $ 4 $, or $ 7 $ (Example \ref{exam-spora}).\vs

In the following examples, denote the number of vertices in the graph by $ v $ and the number of geodesics of length $ i $ by $ \mathcal{L}_i $. Recalling the intersection array of a distance-regular graph, the following result follows naturally by definition.

\begin{lemma}
Let $ \Ga $ be a distance-regular graph with $ \{b_0,b_1\cdots,b_{\d-1}\,;c_1,c_2,\cdots,c_{\d}\} $ the intersection array. Then $\mathcal{L}_i=v b_0b_1\cdots b_{i-1} $, for $ 1\leqslant i\leqslant \d $.
\end{lemma}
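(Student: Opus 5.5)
The plan is to count geodesics of length $i$ by their starting vertex and build them up one step at a time. A geodesic of length $i$ is a path $(X_0,X_1,\ldots,X_i)$ with $\d(X_0,X_i)=i$. Any subpath of a geodesic is again a geodesic, so in particular $\d(X_0,X_j)=j$ for every $0\leqslant j\leqslant i$. Conversely, take a path in which $X_j\in\Ga_j(X_0)$ for each $j$. Then $\d(X_0,X_i)=i$ equals the length of the path, so the path is a geodesic.

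The set of geodesics of length $i$ starting at $X_0$ is therefore in bijection with the set of sequences $(X_1,\ldots,X_i)$ satisfying two conditions:
\begin{itemize}
\item $X_1$ is a neighbour of $X_0$;
\item for $1\leqslant j\leqslant i-1$, $X_{j+1}$ is a neighbour of $X_j$ lying in $\Ga_{j+1}(X_0)$.
\end{itemize}
The count then proceeds by induction on the length.
\begin{itemize}
\item There are $b_0=k$ choices for $X_1$.
\item Suppose $X_j$ has been chosen with $\d(X_0,X_j)=j$. The number of neighbours of $X_j$ at distance $j+1$ from $X_0$ is exactly $b_j$, by the definition of the intersection array. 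This number does not depend on the earlier choices.
\end{itemize}
Hence each vertex $X_0$ is the starting vertex of exactly $b_0b_1\cdots b_{i-1}$ geodesics of length $i$. For this to be nonzero we need $i\leqslant\d$, and indeed $b_j>0$ for $j<\d$.

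Summing over the $v$ choices of $X_0$ gives $\mathcal{L}_i=v\,b_0b_1\cdots b_{i-1}$. Here a geodesic is counted as an ordered (directed) path. This is the convention that matters for transitivity on geodesics, since an automorphism group acting transitively on geodesics of length $i$ acts on these ordered sequences. I will state this convention explicitly. If unordered geodesics were intended, one would divide by $2$.

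There is no real obstacle in this argument. The only care needed is the observation that a walk with $X_j\in\Ga_j(X_0)$ for all $j$ is automatically a geodesic (in particular it is a path with no repeated vertices), and conversely that every geodesic arises in this way, so that the count is exact.
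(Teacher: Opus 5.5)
Your argument is correct and is essentially the paper's: the paper offers no detailed proof and only says the formula follows from the definition of the intersection array, and your step-by-step count ($b_j$ choices for $X_{j+1}$ among the neighbours of $X_j$ in $\Ga_{j+1}(X_0)$, then summing over the $v$ starting vertices) is exactly that argument written out. Your choice to count geodesics as ordered sequences is the right convention, and it matches how the paper uses $\mathcal{L}_i$ in its divisibility arguments against $|\Aut\,\Ga|$.
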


\begin{exam}\label{exam-inf}
A \textbf{Taylor graph} is a distance-regular antipodal $ 2 $-cover of a complete graph $ K_{k+1} $ with diameter $ 3 $ and intersection array $ \{k,\mu,1\,;1,\mu,k\} $. According to \cite[\S\,7.6.C (v), pp.\,228-229]{DRG-book} {\rm(}see also {\rm Taylor} \cite[Thm.\,1\,\&\,2]{Taylor}{\rm)}, for each odd prime power $ q=p^f>3 $, there exists two complementary\,\footnote{~Here, `complementary' means that their corresponding designs (also called \textit{two-graphs}) are complementary, not that the graphs themselves are complementary (refer to \cite[\S\,1.5]{DRG-book}).} distance-transitive Taylor graphs $ {\rm T_1}(q) $ and $ {\rm T_2}(q) $ of unitary type, defined as follows.\vs

Let $ H $ be a Hermitian form on the projective plane $ \PG(2,q^2) $. Let $ U = \{u\,|\,H(u,u)=0\} $. A triple $ \{\alpha,\beta,\gamma\} $ on $ U $ is called a \textit{coherent} triple if the value $ H(\alpha,\beta)H(\beta,\gamma)H(\gamma,\alpha)\in\mathbb{F}_{q^2} $ is a non-square when $ q\equiv 1\,(\mbox{\rm mod\,} 4) $ or a square when $ q\equiv 3\,(\mbox{\rm mod\,} 4) $. Now, take $ z\in U $ to be a fixed point. The graph $ {\rm T_1}(q) $ is defined with vertices being the symbols $ u^+, u ^-\, (u\in U) $, whose edges are the pair $ u^\sigma w^\tau $, where $ u\ne w $ and either $ \sigma=\tau $, $ \{u,w,z\} $ coherent or $ \sigma\ne\tau $, $ \{u,w,z\} $ not coherent. The graph $ {\rm T_2}(q) $ is defined in a similar manner, where the roles of coherent and incoherent triples are reversed.

Also by \cite{DRG-book} or \cite{Taylor}, the graphs $ {\rm T_1}(q) $ and $ {\rm T_2}(q) $
are with parameters \[v=2(q^3+1),\  k=q^3,\  \mu_1=(q+1)(q^2-1)/2,\  \mu_2=(q-1)(q^2+1)/2,\] and the (full) automorphism group $ \Aut\,{\rm T_1}(q)=\Aut\,{\rm T_2}(q)=2\times\PGaU(3,q^2) $. Note that, $ {\rm T_1}(q) $ {\rm(}resp.\,$ {\rm T_2}(q) ${\rm)} is geodesic-transitive only if the number $ \mathcal{L}_3=vk\mu_1 $ {\rm(}resp.\,$ vk\mu_2 ${\rm)} divides the order $ |2\times\PGaU(3,q^2)|= q^3(q^3+1)(q^2-1)\cdot 4f $, i.e., \[(q+1)\,|\,4f, \mbox{\ or\ respectively,\ } (q^2+1)\,|\, (q+1)\cdot 4f.\] However, this does not hold, because by simple calculation we have{\rm\,:} for any odd prime power $ q=p^f>3 $,  $ (q+1)>4f $ and $  (q^2+1)>(q+1)\cdot 4f $. Hence, the family of graphs $ {\rm T_1}(q) $ {\rm(}and also $ {\rm T_2}(q) ${\rm)} are not geodesic-transitive.\vs
\end{exam}

\begin{exam}\label{exam-spora}
{\rm (1)} The incidence graph of a square $2$-$(w,k,\mu) $-design is introduced in \cite[\S\,1.6]{DRG-book}, which is distance-regular with diameter $ 3 $ and intersection array $ \{k,k-1,k-\mu,\,;1,\mu,k\} $, where $w=\frac{1}{2}v=1+k(k-1)/\mu$. According to \cite[\S\,7.6.2 (iii), pp.\,226]{DRG-book} {\rm(}see also {\rm Kantor} \cite{symD}{\rm)}, there exists a $ 2 $-$ (176,50,14) $-design $\mathcal{D}$ with $\Aut\,\mathcal{D}={\sf HS}$, whose incidence graph $ \Ga_1 $ is distance-transitive with $ v=352 $, intersection array $ \{50,49,36\,;1,14,50\} $, and $ \Aut\,\Ga_1 $ having $ \Aut\,\mathcal{D} $ as a index $ 2 $ subgroup (containing a duality on $\mathcal{D}$). Since $ \mathcal{L}_3=352\cdot 50\cdot49\cdot 36=2^8\cdot3^2\cdot5^2\cdot7^2\cdot11$, which does not divide $ |\Aut\,\Ga_1|= 2\cdot|{\sf HS}|=2^{10}\cdot 3^2\cdot 5^3\cdot 7\cdot 11 $, the graph $ \Ga_1 $ is not geodesic-transitive.\vs
    
{\rm (2)} The following graph is introduced in \cite[Thm.\,11.3.7, pp.\,364]{DRG-book}. Let $ {\HF}(3,2) $ be the Hermitian forms graph over $ \mathbb{F}_4 $, and $ \gamma $ be a fixed vertex. Define $ \Ga_2 $ to be the subgraph induced by $ {\HF}(3,2) $ on the antipodal points of $ \gamma $. Then $ \Ga_2 $ is distance-transitive with diameter $ \d=4 $, $ v=280 $, intersection array $ \{9,8,6,3\,;1,1,3,8\} $, and $ \Aut\,\Ga_2=\PGaL(3,4).2 $. Since $ \mathcal{L}_4=280\cdot 9\cdot 8\cdot 6\cdot 3$, which does not divide $ |\Aut\,\Ga_2|= 280\cdot 2^5\cdot 3^3 $, the graph $ \Ga_2 $ is not geodesic-transitive.\vs
    
{\rm (3)} Let $ \Ga_3 $ be the Patterson graph for Suzuki group {\sf Suz}, which is introduced in \cite[Thm.\,13.7.1, pp.\,410]{DRG-book}. Then $ \Ga_3 $ is distance-transitive with diameter $ \d=4 $, $ v=22880 $, intersection array $ \{280,243,144,10\,;1,8,90,280\} $, and $ \Aut\,\Ga_3={\sf Suz}.2 $. Since $ \mathcal{L}_4=22880\cdot280\cdot243\cdot144\cdot10=2^{13}\cdot3^7\cdot5^3\cdot7\cdot11\cdot13$, which does not divide $ |\Aut\,\Ga_3|= 2^{14}\cdot3^7\cdot5^2\cdot7\cdot11\cdot13 $, the graph $ \Ga_3 $ is not geodesic-transitive.\vs
    
{\rm (4)} The following graph is introduced in \cite[\S\,11.3.F, pp.\,362-363]{DRG-book} {\rm (}see also \cite[\S\,5.7.4]{ABd3}{\rm )}. Let $ T $ be the coset graph $ [2^{10}.\M_{22}.2] $ of the truncated binary Golay code, and let $ \Ga_4 $ be its bipartite double. Then $ \Ga_4 $ is distance-transitive with diameter $\d=7 $, $ v=2^{11} $, intersection array $\{22,21,20,16,6,2,1\,;1,2,6,16,20,21,22\}$, and $ \Aut\,\Ga_4=(2^{10}.\M_{22}.2)\times 2 $. Since $ \mathcal{L}_7=2^{11}\cdot22\cdot21\cdot20\cdot16\cdot6\cdot2=2^{20}\cdot3^2\cdot5\cdot7\cdot11$, which does not divide $ |\Aut\,\Ga_4|= 2^{19}\cdot3^2\cdot5\cdot7\cdot11$, the graph $ \Ga_4 $ is not geodesic-transitive.\vs\vs
\end{exam}

\section{Graphs in Table \ref{tab-main}}\label{sec-gtg} 

In this section we focus on graphs listed in Table \ref{tab-main}. The geodesics of these graphs all have a clear (often geometric) structure. Consequently, all these graphs are geodesic-transitive, that is, Theorem \ref{thm-gtg} holds. We treat the cycles $ {\rm C}_k $ as an obvious case and treat the others in Sections \ref{sec-set}\,-\,\ref{sec-gp}. For convenience, denote by $\mathcal{L}_{XY}$ the set of geodesics from $ X $ to $ Y $, where $ X,Y $ are two vertices.\vs

Most of these graphs have been studied in detail in \cite[\S\,6, \S\,9]{DRG-book}, where they are referred to as \textit{graphs with classical parameters}, \textit{partition graphs}, and \textit{regular near polygons}. We list in Table \ref{tab-ci} the intersection numbers for some of them which will be used later. Here, $ [x] $ denotes the floor function of $ x $, and the number $ [\stack{n}{m}]_q $ (or simply $ [\stack{n}{m}] $ if there is no ambiguity) denotes the $ q $-ary Gaussian binomial coefficient, i.e., $[\stack{n}{m}]_q=\frac{(q^n-1)\,\cdots\,(q^{n-m+1}-1)}{(q^m-1)\,\cdots\,(q-1)}$.\vs

\begin{table}[htb]
\centering
\caption{Intersection number $ c_i $}\label{tab-ci}
\scalebox{0.85}{
\begin{tabular}{c|c|c|c|c|c|c|c}
	\toprule[1pt]
	$ \Ga $ & $ \J(n,k) $ & $ \mathcal{O}(k)$ & $ {\G}(n,k,q)
	 $ & $ \mathcal{G}(k,q) $ & $ \DD(W)$ & $ \H(k,m) $ & $\BF(m,k,q)$\\
	\midrule[0.5pt]
	
	$ \d_{\Ga} $ & $ k $ & $ 2k-1 $ & $ k $ & $ 2k-1 $ & $ \o $ & $ k $ & $ \min\,\{m,k\} $\\
	\midrule[0.5pt]
	
	$ c_i $ & $ i^2 $ & $ [\frac{1}{2}(i+1)] $ & $ [\stack{i}{1}]^2 $ & $ [\stack{[\frac{1}{2}(i+1)]}{1}] $ & $[\stack{i}{1}]$ & $ i $ & $ q^{i-1}[\stack{i}{1}] $\\
	\bottomrule[1pt]
\end{tabular}}
\end{table}

The following two lemmas are elementary but useful.

\begin{lemma}\label{lem31}
Let $ \Ga $ be a $ G $-distance-transitive graph with intersection array $ \{b_0,b_1\cdots,b_{\d-1}\,;c_1,c_2,\cdots,c_{\d}\} $. Let $ X, Y $ be a pair of antipodal vertices at distance $ \d $. Then $ \Ga $ is $ G $-geodesic-transitive if and only if the stabilizer of the pair $ (X,Y) $ in $ G $ acts transitively on the set $\mathcal{L}_{XY} $. In particular, $ |\mathcal{L}_{XY}|= c_1c_2\cdots c_\d $.
\end{lemma}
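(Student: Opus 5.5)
The plan is to exploit the prefix structure of geodesics: every geodesic of length $i<\d$ extends to a geodesic of length $\d$. Together with distance-transitivity, this reduces transitivity on all geodesics to transitivity on $\mathcal{L}_{XY}$ for a single antipodal pair $(X,Y)$. The count $|\mathcal{L}_{XY}|$ then comes from a layer-by-layer recursion using the $c_i$.

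\emph{Necessity.} Assume $\Ga$ is $G$-geodesic-transitive and take $\gamma,\gamma'\in\mathcal{L}_{XY}$. Both are geodesics of length $\d$, so some $g\in G$ maps $\gamma$ to $\gamma'$. A path is a sequence of vertices, so $g$ preserves the endpoints in order: $X^g=X$ and $Y^g=Y$. Hence $g\in G_{X,Y}$, and $G_{X,Y}$ is transitive on $\mathcal{L}_{XY}$.

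\emph{Sufficiency.} Assume $G_{X,Y}$ is transitive on $\mathcal{L}_{XY}$, and let $\gamma=(Z_0,\dots,Z_i)$ and $\gamma'=(Z'_0,\dots,Z'_i)$ be geodesics of length $i\le\d$.
\begin{enumerate}
\item Extend each to a geodesic of length $\d$. Since $\d(Z_0,Z_i)=i$ and $b_i>0$ for $i<\d$, the last vertex $Z_i$ has a neighbour at distance $i+1$ from $Z_0$. Appending it gives a longer geodesic, and iterating yields a geodesic $\hat\gamma$ of length $\d$ from $Z_0$ to some $W$ with prefix $\gamma$. Do the same for $\gamma'$ to get $\hat\gamma'$ ending at $W'$.
\item Move both to the standard pair. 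By distance-transitivity, $(Z_0,W)$ and $(Z'_0,W')$ are images of $(X,Y)$ under some $h,h'\in G$, so $\hat\gamma^{h^{-1}}$ and $\hat\gamma'^{h'^{-1}}$ lie in $\mathcal{L}_{XY}$.
\item By hypothesis some $g\in G_{X,Y}$ maps $\hat\gamma^{h^{-1}}$ to $\hat\gamma'^{h'^{-1}}$. Then $h^{-1}gh'$ maps $\hat\gamma$ to $\hat\gamma'$, and hence their length-$i$ prefixes $\gamma$ to $\gamma'$.
\end{enumerate}

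\emph{Counting.} Let $N_j$ be the number of geodesics from $X$ to a vertex at distance $j$ from $X$; this number is independent of the vertex chosen. A geodesic ending at $V$ with $\d(X,V)=j$ is exactly a geodesic ending at a neighbour $U$ of $V$ with $\d(X,U)=j-1$, followed by the edge $UV$. There are $c_j$ such neighbours, so $N_j=c_jN_{j-1}$. With $N_0=1$ this gives $|\mathcal{L}_{XY}|=N_\d=c_1c_2\cdots c_\d$.

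No step is a real obstacle. The only point needing care is the extension in step 1 of sufficiency, which relies on $b_i\ge 1$ for $i<\d$; this holds because vertices at every distance up to $\d$ exist in a distance-regular graph.
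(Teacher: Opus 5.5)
Your proof is correct and follows the same route as the paper. The necessity direction is immediate. Sufficiency comes from transitivity on antipodal pairs combined with the stabilizer hypothesis, which gives diameter-transitivity, and then from extending every shorter geodesic to a diameter using $b_i\geqslant 1$. The count follows from the $c_i$. Your version only makes explicit the details the paper leaves implicit, namely the conjugating element $h^{-1}gh'$ and the recursion $N_j=c_jN_{j-1}$.
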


\begin{proof}
The `only if' part is trivial. For the `if part', $ G $ is transitive on the pairs of antipodal vertices, while the stabilizer of a fixed pair $ (X,Y) $ in $ G $ acts transitively on $\mathcal{L}_{XY}$, so $ G $ is transitive on the diameters of $ \Ga $. The definition of numbers $ b_i $ implies that every geodesic of length $ i<\d $ extends at its tail end to a diameter. Then $ G $-diameter-transitivity implies $ G $-geodesic-transitivity. At last, it follows directly from the definition of $ c_i $ that $ |\mathcal{L}_{XY}|= c_1c_2\cdots c_\d $.
\end{proof}

\begin{lemma}\label{lem32}
The following statements hold{\rm\,:} {\rm(1)} a halved graph of a bipartite geodesic-transitive graph is geodesic-transitive{\rm\,;} {\rm(2)} a folded graph of an antipodal geodesic-transitive graph is geodesic-transitive. 
\end{lemma}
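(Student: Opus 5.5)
The plan is to lift geodesics of the derived graph to geodesics of $\Ga$, apply the transitivity of $G$ there, and project back down. Throughout, let $\Ga$ be $G$-geodesic-transitive with diameter $\d$. Then $\Ga$ is $G$-distance-transitive, hence distance-regular. I also use that every automorphism of $\Ga$ preserves distances in $\Ga$.

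\emph{Part (1).} Let $\Ga$ be bipartite with parts $V^+,V^-$, and consider $\Ga^+$. Let $G^+$ be the setwise stabilizer of $V^+$ in $G$. Because $\Ga$ is connected and bipartite, any $g\in G$ that maps one vertex of $V^+$ into $V^+$ lies in $G^+$. Also $G^+$ preserves $\Ga$-distance $2$, so it acts on $\Ga^+$ by automorphisms. Next I relate distances: for $X,Z\in V^+$ we have $\d_{\Ga^+}(X,Z)=\d_\Ga(X,Z)/2$. Indeed, a $\Ga$-path of length $2j$ yields a $\Ga^+$-path of length $j$, and conversely each $\Ga^+$-edge can be replaced by two $\Ga$-edges. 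Now take a geodesic $(X_0,\dots,X_j)$ of $\Ga^+$. For each $i$ choose a common neighbour $Y_i$ of $X_i$ and $X_{i+1}$ in $\Ga$. The walk $(X_0,Y_0,X_1,\dots,Y_{j-1},X_j)$ has length $2j=\d_\Ga(X_0,X_j)$, so it is a geodesic of $\Ga$. Given two $\Ga^+$-geodesics of length $j$, lift both in this way. Some $g\in G$ maps the first lift to the second. This $g$ sends $X_0$ into $V^+$, so $g\in G^+$, and it carries the even-indexed vertices of the first lift onto those of the second. Hence $G^+$ is transitive on $\Ga^+$-geodesics of every length, which proves (1).

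\emph{Part (2).} Let $\Ga$ be antipodal, so that ``equal or at distance $\d$'' is an equivalence relation whose classes are the fibres. Since $G$ preserves distance $\d$, it permutes the fibres and therefore acts on the folded graph $\overline\Ga$ by automorphisms. The key input, and in my view the main obstacle, is the standard fact for antipodal distance-regular graphs (cf.\ \cite[Prop.\,4.2.2]{DRG-book}): if $\d_\Ga(y,y')=\d$, then $\d_\Ga(x,y)+\d_\Ga(x,y')=\d$ for every vertex $x$. It follows that for fibres $F\ne F'$ and any $x\in F$, the number $\min_{y\in F'}\d_\Ga(x,y)$ does not depend on the choice of $x$ and equals $\d_{\overline\Ga}(F,F')$. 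Here one uses that a path in $\overline\Ga$ lifts step by step, and that distance in $\overline\Ga$ is at most $\lfloor \d/2\rfloor$. If one prefers not to quote this fact, I would derive the sum formula directly from the intersection numbers of an antipodal distance-regular graph.

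Now let $(F_0,\dots,F_j)$ be a geodesic of $\overline\Ga$ and fix $x_0\in F_0$. By the representative-independence just established, each $x_i\in F_i$ has a neighbour $x_{i+1}\in F_{i+1}$. The resulting path $(x_0,\dots,x_j)$ has length $j$, and $\d_\Ga(x_0,x_j)\geqslant \d_{\overline\Ga}(F_0,F_j)=j$, so it is a geodesic of $\Ga$. Given two $\overline\Ga$-geodesics of length $j$, lift both, map one lift onto the other by some $g\in G$, and observe that $g$ maps the first fibre sequence onto the second. Hence $G$ is geodesic-transitive on $\overline\Ga$. The degenerate case $\d=2$ needs no separate treatment: there $\overline\Ga$ is complete, and the same lifting argument applies verbatim.
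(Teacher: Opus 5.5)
Your part (1) is correct and is the paper's argument. You also add the useful check that any $g\in G$ carrying one lift onto another must stabilize $V^+$.

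The gap is in part (2), at exactly the step you call the key input. The fact you quote is: if $\d_\Ga(y,y')=\d$ then $\d_\Ga(x,y)+\d_\Ga(x,y')=\d$ for \emph{every} vertex $x$. This holds only when the fibres have size $2$. When the fibres have size $r\geqslant 3$ it is false:
\begin{itemize}
\item If $x,y,y'$ are three distinct vertices of one fibre, the sum is $2\d$.
\item It also fails for $x$ outside the fibre. In the line graph of the Petersen graph (an antipodal $3$-cover of $\K_5$ with $\d=3$), a neighbour $x$ of $y$ is at distance $2$ from both other vertices of the fibre of $y$.
\end{itemize}
So the formula cannot be derived from the intersection numbers either. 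Your argument that $\min_{y\in F'}\d_\Ga(x,y)$ is independent of $x\in F$ therefore only covers $r=2$, whereas the lemma concerns arbitrary antipodal graphs. The correct general statement is: for $x\notin F'$, either all vertices of $F'$ are at distance $\d/2$ from $x$, or exactly one is at some distance $j<\d/2$ and all the others are at distance $\d-j$.

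What you actually need is true and has a short direct proof. It is precisely the cover property the paper invokes when it says that $\Ga(v)$ corresponds one to one with $\overline\Ga(\bar v)$. Suppose $x'\in F$ is adjacent to $y'\in F'$, and $x\in F\setminus\{x'\}$.
\begin{itemize}
\item By the triangle inequality, $\d_\Ga(x,y')\geqslant\d-1$.
\item Since $y'\notin F$, $\d_\Ga(x,y')\neq\d$, so $\d_\Ga(x,y')=\d-1$.
\item By distance-regularity, $x$ then has $b_{\d-1}\geqslant 1$ neighbours at distance $\d$ from $y'$, i.e.\ in $F'$.
\end{itemize}
So every vertex of a fibre has a neighbour in each adjacent fibre, and paths of $\overline\Ga$ lift step by step from any starting vertex. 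Combine this with projecting $\Ga$-paths, which is valid because no edge lies inside a fibre when $\d\geqslant 2$. This gives $\min_{y\in F'}\d_\Ga(x,y)=\d_{\overline\Ga}(F,F')$ for all $x\in F$.

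With this replacement the rest of your part (2) goes through and coincides with the paper's proof. It even handles $\d=2,3$ uniformly, where the paper simply notes that $\overline\Ga$ is complete.
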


\begin{proof}
Recall the halved and the folded graphs introduced in Section \ref{sec-pri}. \vs

(1) Let $ \Ga $ be a bipartite $ G $-geodesic-transitive graph, and let $ \Ga_{0} $ be a halved graph of $ \Ga $ on vertex set $ V_{0} $. By restricting those elements of $ G $ that fix $ V_{0} $ to $ V_{0} $ itself, the group $ G $ induces an automorphism group $ G_0\leqslant\Aut\,\Ga_{0} $. Note that, any $ r $-geodesic $ L_0 $ in $ \Ga_{0} $ can be embedded into a $ 2r $-geodesic $ L $ in $ \Ga $ as follows\,:  $$ L_0:(X_{0},X_{1},\ldots,X_{r}) \rightarrow L:(X_{0},Y_{1},X_{1},Y_{2},\ldots,Y_{r},X_{r}),\  X_{i}\in V_{0},\  Y_{i}\notin V_{0}.$$ Since $ \Ga $ is $ G $-geodesic-transitive, it follows that $ \Ga_0 $ is $ G_0 $-geodesic-transitive.\vs
	
(2) Let $ \Ga $ be an antipodal $ G $-geodesic-transitive graph with diameter $ \d $, and let $ \overline{\Ga} $ be the folded graph. Then $ G $ naturally induces an automorphism group $ \overline{G}\leqslant\Aut\overline{\Ga} $. If $ \d=2 $ or $ 3 $, then $ \overline\Ga $ is a complete graph so it is geodesic-transitive. Suppose then $ \d>3 $, $ \Ga $ is an antipodal cover of $ \overline\Ga $. For any vertex $ v $ of $ \Ga $, the set of neighbors $ \Ga(v) $ corresponds to $ \overline{\Ga}(\bar{v}) $ one to one. Thus, any geodesic $ \overline{L}:(\overline{X}_{1},\ldots,\overline{X}_{\ell}) $ in $ \overline{\Ga} $ has a preimage $ L:(X_{1},\ldots,X_{\ell}) $ which forms a geodesic in $ \Ga $. Since $ \Ga $ is $ G $-geodesic-transitive, it follows that $\overline{\Ga}$ is $ \overline{G} $-geodesic-transitive. 
\end{proof}

\subsection{\it Johnson family}\label{sec-set}{~}\vs

In this part, let $ \Delta $ be a fixed set of size $ n $. Denote by $ \binom{\Delta}{i} $ the set of all $ i $-subsets of $ \Delta $, and by $ G_\Delta $ the symmetric group $ \Sym(\Delta) $. The \textit{Johnson graph} $ {\J}(n,k)$, $ 1\leqslant k\leqslant $ $[\frac{n}{2}]$, is defined on $ \binom{\Delta}{k}$, where $X,Y$ are adjacent whenever $ |X\cap Y|=k-1$. It has $ G_\Delta $ as an distance-transitive automorphism group (acting in a natural way). The distance is $\d(X,Y)=k-|X\cap Y|$. Further, it is known to be geodesic-transitive (\cite[Prop.\,2.1]{GTG}). We give a short, non-inductive proof\,:\vs

{\it Let $ X,Y $ be antipodal vertices at distance $ k $ with $ X\cap Y =\emptyset $, and $ \mathcal{F}_{XY} $ be the set of pairs $(F_X,F_Y) $ of subset-chains as
\[F_X:X\supset X_{k-1}\supset X_{k-2}\supset\cdots\supset X_{1}\mbox{\ and\ } F_Y: Y_{1}\subset Y_{2}\subset\cdots\subset Y_{k-1}\subset Y,\] where $ |X_i|=| Y_i|=i $. It is easy to check that the map $ f $ defined as follows is an injection from $\mathcal{F}_{XY}$ to $ \mathcal{L}_{XY}${\rm\,:}
\[f:\, (F_X,F_Y)\ \rightarrow\ (X,\,X_{k-1}\cup Y_{1},\,X_{k-2}\cup Y_{2},\,\cdots,\,X_{1}\cup Y_{k-1},\,Y).\] By {\rm Table \ref{tab-ci}}, $ |\mathcal{L}_{XY}|=c_1c_2\ldots c_{k}=1^22^2\cdots k^2=(k!)^2=|\mathcal{F}_{XY}|$, so $ f $ is a bijection. \vs

The automorphism group $ G_\Delta $ contains a subgroup $ S=\Sym(X)\times\Sym(Y)\times 1|_{\Delta\setminus(X\cup Y)} $ which fixes $ (X,Y) $ and induces an action on both $\mathcal{F}_{XY}$ and $\mathcal{L}_{XY}$. By definition, $ f $ provides a permutation equivalence between these two actions. Clearly, $ S $ is transitive on $\mathcal{F}_{XY}$, thus, transitive on $\mathcal{L}_{XY}$, too. By {\rm Lemma \ref{lem31}}, the graph $ {J}(n,k)$ is geodesic-transitive.\qed}\vs\vs

The \textit{doubled Odd graph} $ \mathcal{O}(k) $, which have $ J(2k-1,k-1) $ as a halved graph, is defined on $ \binom{\Delta}{k-1}\cup\binom{\Delta}{k} $ with $ |\Delta|=2k-1\geqslant 5 $, where $ X,Y $ are adjacent whenever $ X\subset Y\ \mbox{or}\ Y\subset X$. It is distance-transitive with the distance $ \d(X,Y)=|X|+|Y|-2|X\cap Y| $, and $ \Aut\,\mathcal{O}(k)=G_{\Delta}\times \Z_2 $. Similarly, it is geodesic-transitive\,: \vs

{\it Let $ X,Y $ be antipodal vertices at distance $ 2k-1 $ with $ |X|=k-1 $, $|Y|=k$, $ X\cap Y =\emptyset $, and let $ \mathcal{F}_{XY} $ be the set of pairs $(F_X,F_Y) $ of subset-chains as \[F_X:X\supset X_{k-2}\supset X_{k-3}\supset\cdots\supset X_{1} \mbox{\ and\ } F_Y: Y_{1}\subset Y_{2}\subset\cdots\subset Y_{k-1}\subset Y,\] where $ |X_i|=|Y_{i}|=i $. By {\rm Table \ref{tab-ci}}, $|\mathcal{L}_{XY}|=c_1c_2\cdots c_{2k-1}=1^2\,2^2\cdots(k-1)^2k =|\mathcal{F}_{XY}|$. There exists a bijection $ f:\mathcal{F}_{XY}\rightarrow\mathcal{L}_{XY}$ which maps each $ (F_X,F_Y) $ to \[(X,\,X\cup Y_{1},\,X_{k-2}\cup Y_{1},\,X_{k-2}\cup Y_{2},\,X_{k-3}\cup Y_{2},\,\cdots,\,X_{1}\cup Y_{k-1},\, Y_{k-1},\,Y).\] The automorphism group $ S\leqslant G_\Delta$ with $S=\Sym(X)\times\Sym(Y)\times 1|_{\Delta\setminus(X\cup Y)} $ fixes the pair $ X,Y $. It is transitive on $\mathcal{F}_{XY}$, thus, equivalently, transitive on $\mathcal{L}_{XY}$.\qed}\vs\vs

The Johnson graph $ \J(2k,k) $ is an antipodal $ 2 $-cover of the \textit{folded Johnson graph} $ \bar \J(2k,k) $, defined on partitions of $ \Delta $ into two $ k $-sets, two partitions being adjacent whenever their common refinement is with four parts of sizes $ 1 $, $ k-1 $, $ 1 $, $ k-1 $. The graph $ \mathcal{O}(k) $ is an antipodal $ 2 $-cover of the \textit{Odd graph} $ \O(k) $, defined on $ \binom{\Delta}{k-1} $ with $ |\Delta|=2k-1 $, two vertices being adjacent whenever they are disjoint. By Lemma \ref{lem32}, $ \bar \J(2k,k) $ and $ \O(k) $ are both geodesic-transitive.\vs

In conclusion,  we have the following result.

\begin{pro}
Each of the following graphs is geodesic-transitive{\rm\,:} $ \J(n,k) $, $\bar{\J}(2k,k)$, $\mathcal{O}(k)$ and $\O(k)$.
\end{pro}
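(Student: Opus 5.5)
This proposition collects the arguments just given, so the plan is to assemble them in order; each graph reduces either to Lemma \ref{lem31} or to Lemma \ref{lem32}.

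For $\J(n,k)$ and $\mathcal{O}(k)$ the graphs are distance-transitive under $G_\Delta$ and $G_\Delta\times\Z_2$ respectively. By Lemma \ref{lem31} it suffices to fix one antipodal pair $(X,Y)$ and show that its stabilizer is transitive on $\mathcal{L}_{XY}$.

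\emph{Johnson graphs.} Take disjoint $k$-sets $X,Y$. The map $f$ from pairs of maximal subset-chains $(F_X,F_Y)$ to paths is injective, and every image is a path of length $k$ between vertices at distance $k$, hence a geodesic. Table \ref{tab-ci} gives $|\mathcal{L}_{XY}|=c_1\cdots c_k=(k!)^2$, which equals $|\mathcal{F}_{XY}|$, so $f$ is a bijection. The map $f$ is $S$-equivariant for $S=\Sym(X)\times\Sym(Y)$. Since $S$ is clearly transitive on pairs of maximal chains in $X$ and in $Y$, it is transitive on $\mathcal{L}_{XY}$.

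\emph{Doubled Odd graph.} Use the same argument with $|X|=k-1$, $|Y|=k$ and $X\cap Y=\emptyset$, and the alternating chain map written above. Each step adds one element of $Y$ or removes one element of $X$, so consecutive vertices are adjacent. The counts agree: $|\mathcal{F}_{XY}|=(k-1)!\,k!$ and $c_1\cdots c_{2k-1}=1^2\,2^2\cdots(k-1)^2k$.

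\emph{Folded graphs.} $\bar\J(2k,k)$ is the antipodal quotient of $\J(2k,k)$, and $\O(k)$ is the antipodal quotient of $\mathcal{O}(k)$. Lemma \ref{lem32}(2) then transfers geodesic-transitivity to both.

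The only step needing real checking is the cardinality count that upgrades injectivity of $f$ to bijectivity, that is, that every geodesic arises from a chain pair. I would rely on the $c_i$ values in Table \ref{tab-ci}. Alternatively, a direct argument works: along a geodesic the distance drops by one at each step, so each step must remove exactly one element of $X$ and add one of $Y$. In the doubled Odd case the steps instead alternate between adding an element of $Y$ and removing an element of $X$. This recovers the chains explicitly.
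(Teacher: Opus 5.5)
Your proposal is correct and follows the paper's proof essentially verbatim. For $\J(n,k)$ and $\mathcal{O}(k)$ you use chain-pair bijections onto $\mathcal{L}_{XY}$, equivariant under $\Sym(X)\times\Sym(Y)$, together with Lemma \ref{lem31}; for the folded graphs you use Lemma \ref{lem32}(2). Your optional direct argument also works: each geodesic step must trade an element of $X$ for one of $Y$, or in the doubled Odd case alternately add an element of $Y$ and remove one of $X$. This gives surjectivity of $f$ without relying on the $c_i$ values in Table \ref{tab-ci}.
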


\subsection{\it Graph on projective space}\label{sec-prospace}{~}\vskip0.03in

In this part, let $ V $ be a vector space of dimension $ n $ over $ \mathbb{F}_q$, and $ \PG(V) $ be the associated projective space. Denote by $ [\stack{V}{i}] $ the set of all $ i $-subspaces of $ V $. \vs

The \textit{Grassmann graph} ${\G}(n,k,q)$ (or briefly, $ \G_V(k) $), $ 1\leqslant k\leqslant $ $[\frac{n}{2}]$, is defined on $ [\stack{V}{k}]$, where $X,Y$ are adjacent whenever $ dim\ X\cap Y=k-1$. The \textit{doubled Grassmann graph} $ \mathcal{G}(k,q) $ is defined on $ [\stack{V}{k-1}]\cup[\stack{V}{k}] $ with $ dim\ V=2k-1 $, where $ X,Y $ are adjacent whenever $ X\subset Y\ \mbox{or}\ Y\subset X$. These graphs are geodesic-transitive (\cite[Thm.\,2.6 \&\,2.9]{Hua}).\vs

The (doubled) Grassmann graphs graphs are viewed as the `$ q $-analogues' of the Johnson family. The geodesics are described in a similar manner. We only treat the Grassmann $ \G_V(k)$ which will be used later. The distance is $\d(X,Y)=k-dim\ X\cap Y$. Let $ X, Y $ be antipodal vertices at distance $ k $ with $ dim\ X\cap Y=0 $, and $ \mathcal{F}_{XY} $ be the set of pairs $(F_X,F_Y) $ of (maximal) flags\,:
\[F_X:X> X_{k-1}> X_{k-2}>\cdots> X_{1}\mbox{\ and\ } F_Y: Y_{1}< Y_{2}<\cdots< Y_{k-1}< Y,\] where $ dim\  X_i=\, dim\ Y_i=i $. By Table \ref{tab-ci}, $|\mathcal{L}_{XY}|=c_1c_2\cdots c_k=([\stack{1}{1}])^2([\stack{2}{1}])^2\cdots$ $([\stack{k}{1}])^2=([\stack{1}{1}][\stack{2}{1}]\cdots[\stack{k}{1}])^2=|\mathcal{F}_{XY}|$. There exists a bijection $f:\mathcal{F}_{XY}\rightarrow\mathcal{L}_{XY}$ as
\[f:\, (F_X,F_Y)\ \rightarrow\ (X,\,X_{k-1}+ Y_{1},\,X_{k-2}+ Y_{2},\,\cdots,\,X_{1}+Y_{k-1},\,Y).\]

\noindent \textbf{Remark.} We correct a minor error in book \cite{DRG-book}. The \textit{bipartite double} of a graph is introduced in \cite[\S\,1.11]{DRG-book}. The doubled Odd graph $\mathcal{O}(k)$ are in fact the bipartite double of the Odd graph $ \O(k) $\,; similarly, the doubled Grassmann graph $\mathcal{G}(k,q)$ is considered the bipartite double of the distance-$ (k-1) $-graph of Grassmann graph $ \G(2k-1,k-1,q) $ in \cite[pp.272]{DRG-book}. However, a simple calculation shows that these two graphs have different valencies, so this is NOT true.\vs\vs 

There are two more families of graphs arisen from $ \PG(V) $. Let $ \D $ be the symmetric design of the points and hyperplanes in $ \PG(V) $. The incidence graph $ {\rm I}({\sf D}) $ of $ \D $ (or its complementary design) is geodesic-transitive with diameter $ 3 $ (\cite[Thm.\,3.3]{Hua}). These graphs are $ B $-doubles of complete graphs.\vs

Let $ (w,H) $ be an incident point-hyperplane pair in $ \D $. Denote by $ \Delta^{(1)} $ the set of all points not in $ H $ and by $ \Delta^{(2)} $ the set of all hyperplanes not containing $ w $. The \textit{opposites} $ \D^{op\,(w,H)} $ to $ (w,H) $ in $ \D $ is defined to be the incidence geometry \[ \D^{op\,(w,H)}=(\Delta^{(1)},\Delta^{(2)}, \mbox {incidence\ of } \D).\] Let $ \Ga={\rm I}(\D^{op\,(w,H)}) $ be the incidence graph of $ \D^{op\,(w,H)} $. According to \cite[Main\,Thm.\,(1)]{AKnn} (where the dimension is assumed to be $ n+1 $), $ \Ga $ is a distance-transitive antipodal $ q $-cover of $ K_{q^{n-2},q^{n-2}} $, with intersection numbers \[c_1=1,\  c_2=q^{n-3},\  c_3=q^{n-2}-1,\  c_4=q^{n-2}.\] It is easy to see that the parabolic subgroup of $ \PGL(V) $ that stabilizes the pair $ (w,H) $ induces an automorphism group $ T\leqslant\Aut\,\Ga $.

\begin{pro}
The graph $\Ga={\rm I}(\D^{op\,(w,H)}) $ defined above is geodesic-transitive.
\end{pro}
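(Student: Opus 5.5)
We will apply Lemma~\ref{lem31} with $G=T$, the group induced on $\Ga$ by the parabolic subgroup $P\leqslant\PGL(V)$ stabilizing $(w,H)$. There are two things to check. First, $T$ is distance-transitive on $\Ga$; this should follow from the argument in \cite{AKnn}, and otherwise we verify it directly below using the same transitivity steps. Second, for a fixed antipodal pair $(X,Y)$ the stabilizer $T_{XY}$ is transitive on $\mathcal{L}_{XY}$. By Lemma~\ref{lem31},
\[
|\mathcal{L}_{XY}|=c_1c_2c_3c_4=q^{n-3}(q^{n-2}-1)q^{n-2}.
\]

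\textbf{Antipodal pairs and geodesics.} Take $X=x$ and $Y=y$ to be two distinct points of $\Delta^{(1)}$ on a common line $\ell$ through $w$. Any hyperplane containing $x$ and $y$ contains $w$, so $x$ and $y$ have no common neighbour and $\d(x,y)=4$; the $q$ points of $\ell\setminus\{w\}$ form the antipodal class. A geodesic $x\sim K_1\sim z\sim K_2\sim y$ is then determined as follows.
\begin{itemize}
\item $K_1$ is a hyperplane with $x\in K_1$ and $w\notin K_1$, so $K_1\cap\ell=\{x\}$.
\item $z$ is a point of $K_1\setminus H$ with $z\neq x$. We need $\d(z,y)=2$, i.e.\ some hyperplane through $z,y$ avoids $w$, i.e.\ $w\notin zy$. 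This is equivalent to $z\notin\ell$, which holds automatically.
\item $K_2$ is a hyperplane containing the line $zy$ but not $w$.
\end{itemize}
We will count these choices and check that the product equals $q^{n-3}(q^{n-2}-1)q^{n-2}$. This confirms the description of $\mathcal{L}_{xy}$ and the values $c_i$.

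\textbf{Transitivity of $T_{xy}$.} Fix coordinates $e_1,\dots,e_n$ with $w=\langle e_1\rangle$, $H=\langle e_1,e_3,\dots,e_n\rangle$, $x=\langle e_2\rangle$ and $y=\langle e_1+e_2\rangle$. Then $P_{xy}$ consists of the (projective classes of) matrices fixing these subspaces. We will prove transitivity in three successive steps.
\begin{itemize}
\item[(a)] $P_{xy}$ is transitive on hyperplanes through $x$ avoiding $w$. These are the hyperplanes $\{v: a_1v_1+a_3v_3+\cdots+a_nv_n=0\}$ with $a_1\neq 0$. Normalising $a_1=1$, the maps $e_i\mapsto e_i+t_ie_1$ for $i\geqslant 3$ fix $w,H,x,y$ and act regularly on $(a_3,\dots,a_n)$.
\item[(b)] The stabilizer of $K_1$, which we may take to be $\langle e_2,e_3,\dots,e_n\rangle$, is transitive on the points $z\in K_1\setminus H$ with $z\neq x$. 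Such points have the form $\langle e_2+u\rangle$ with $0\neq u\in\langle e_3,\dots,e_n\rangle$, and the subgroup $\GL(\langle e_3,\dots,e_n\rangle)$, acting trivially on $e_1,e_2$, moves any such $u$ to $e_3$.
\item[(c)] The stabilizer of $(x,y,K_1,z)$ is transitive on hyperplanes $K_2\supset\langle y,z\rangle$ not containing $w$. Here the pointwise stabilizer of $\langle e_1,e_2,e_3\rangle$ inside the block $\GL(\langle e_3,\dots,e_n\rangle)$, together with transvections $e_i\mapsto e_i+s_i(e_3-e_2\ldots)$ adapted to $K_1$, should act on the relevant coefficient vectors much as in step (a).
\end{itemize}

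Step~(c) is the main obstacle, because several subspaces are fixed simultaneously. If the explicit matrix description becomes messy, we will fall back on counting: compute $|P_{xy}|$ directly from its block form, show that $P_{xy}$ acts on $\mathcal{L}_{xy}$ with the stabilizer of a geodesic of order $|P_{xy}|/|\mathcal{L}_{xy}|$, and conclude transitivity by orbit–stabilizer. Since steps (a) and (b) already give transitivity on partial geodesics of length $2$, it suffices to analyse the final step. Lemma~\ref{lem31} then gives that $\Ga$ is $T$-geodesic-transitive.
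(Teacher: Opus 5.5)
Your approach is sound and differs from the paper's. The paper encodes a geodesic $(X,K_1,z,K_2,Y)$ by the pair $(U,z)$ with $U=K_1\cap K_2$, a complement of $X+Y$ not contained in $H$. It counts these pairs against $c_1c_2c_3c_4$ to see that the encoding is a bijection onto $\mathcal{L}_{XY}$. It then writes down one explicit element of the stabilizer of $(w,H,X,Y)$ carrying a base pair $(U_0,u_0)$ to an arbitrary pair. You instead work with the triples $(K_1,z,K_2)$ directly and run a stabilizer chain, each link handled by an obvious unipotent or Levi subgroup. Your description of $\mathcal{L}_{xy}$ follows from adjacency alone: any walk of length $4$ between vertices at distance $4$ is a geodesic. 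So on your route the count is only a consistency check and the intersection numbers are never needed. The paper's route needs the count for surjectivity but gets transitivity in a single stroke.

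Two things must be fixed. First, $T$ is not distance-transitive. It is induced by collineations of $\PG(V)$, so it preserves each of $\Delta^{(1)}$ and $\Delta^{(2)}$ and is not even vertex-transitive. Hence Lemma~\ref{lem31} cannot be applied with $G=T$. Your closing claim that $\Ga$ is $T$-geodesic-transitive is false: no element of $T$ maps a geodesic starting at a point to one starting at a hyperplane, and no direct check via (a)--(c) can change this. Apply the lemma with $G=\Aut\,\Ga$, which is distance-transitive by \cite{AKnn}. Concretely, $\Aut\,\Ga$ also contains the polarity $U\mapsto U^{\perp}$ of the symmetric form $\beta$ with $\beta(e_1,e_2)=\beta(e_2,e_1)=1$, $\beta(e_i,e_i)=1$ for $i\geqslant 3$, and all other $\beta(e_i,e_j)=0$. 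This polarity interchanges $w$ and $H$, hence $\Delta^{(1)}$ and $\Delta^{(2)}$. Since $T_{xy}\leqslant(\Aut\,\Ga)_{xy}$, transitivity of $T_{xy}$ on $\mathcal{L}_{xy}$ is exactly what the lemma needs. This is also how the paper's own appeal to Lemma~\ref{lem31} has to be read.

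Second, step (c), which you single out as the main obstacle, is a one-liner. Neither the extra transvections nor the order-counting fallback is needed; the fallback would in any case require computing a geodesic stabilizer in $P_{xy}$, which is the same problem. Take $K_1=\langle e_2,\dots,e_n\rangle$ and $z=\langle e_2+e_3\rangle$. A hyperplane $K_2=\{v:\ b_1v_1+\cdots+b_nv_n=0\}$ with $y,z\in K_2$ and $w\notin K_2$ has, after normalising $b_1=1$, $b_2=-1$ and $b_3=1$. The tuple $(b_4,\dots,b_n)$ is arbitrary, giving the $q^{n-3}$ choices. Consider the maps $e_i\mapsto e_i+t_ie_3$ for $i\geqslant 4$, fixing $e_1,e_2,e_3$. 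They lie in the pointwise stabilizer you named and preserve $H$ and $K_1$, since $e_3\in H\cap K_1$. They fix $w,x,y,z$ and replace $(b_4,\dots,b_n)$ by $(b_4-t_4,\dots,b_n-t_n)$. So they act regularly on the admissible $K_2$, exactly as in (a). With this, your chain (a)--(c) gives transitivity of $T_{xy}$ on $\mathcal{L}_{xy}$, and together with the first correction the proof is complete.
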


\begin{proof}
By \cite[Cons.\,1.1]{AKnn}, each antipodal block in $ \Delta^{(1)} $ is the set of points (apart from $ w $) of a fixed $ 2 $-dimensional subspace on $ w $. To clarify the situation, fix a basis $ \{v_1,v_2\ldots,v_n\} $ for $ V $ and a pair of antipodal points $ X,Y $ in $\Delta^{(1)}$ such that \[w=\l v_1\r,\ H=\l v_1,v_2\ldots,v_{n-1}\r,\ X=\l v_n\r, \ Y=\l v_n+v_1\r.\] 
	
Let $\mathcal{F}_{XY}$ be the set of pairs $ (U,\l u\r) $, where $ U $ is a complementary subspace of $ X+Y $ in $ V $ that is not contained in $ H $, and $ \l u\r\in\Delta^{(1)}$ is point lying in $ U $. We calculate the numbers of such pairs. The subspace $ U $ is of form \[U=\l v_2+m_2,\,v_3+m_3,\,\ldots,\,v_{n-1}+m_{n-1}\r,\] where the vectors $ m_2,\ldots,m_{n-1}$ lie in $ X+Y $, and (since $ U\nleq H $) at least one of them does not lie in $\l v_1\r$. Thus, there are $ q^{2(n-2)}-q^{(n-2)} $ choices for $ U $. Further, each $ U $ contains $ (q^{n-2}-1)/(q-1) $ $ 1 $-spaces while $ (q^{n-3}-1)/(q-1) $ of them are contained in $ U\cap H $. Then there are $ q^{n-3} $ choices for $ \l u\r $. Hence, we have  \[|\mathcal{L}_{XY}|=c_1c_2c_3c_4=q^{n-3}\cdot q^{n-2}(q^{n-2}-1)=|\mathcal{F}_{XY}|.\]
	
For any pair $ (U,\l u\r)\in\mathcal{F}_{XY} $, both $ X+U $ and $ Y+U $ are hyperplanes not containing $ w $. Then there exists a map $ f $ from $\mathcal{F}_{XY}$ to $\mathcal{L}_{XY}$ defined as follows{\rm\,:} \[f:\, (U,\l u\r)\ \rightarrow\ (X,\,X+U,\,\l u\r,\,Y+U,\, Y).\] As $ U$ is determined by $U=(X+U)\cap (Y+U) $, $ f $ is an injection (so, a bijection).\vs

Recall $ T\leqslant\Aut\,\Ga $, the group of automorphism induced by the stabilizer of $ (w,H) $ in $ \PGL(V) $. Let $ S\leqslant T $ be the subgroup that fixes $ (X,Y) $. Then $ S $ induces an action on both $\mathcal{F}_{XY}$ and $\mathcal{L}_{XY}$. By definition, $ f $ provides a permutation equivalence between these two actions. Then, by Lemma \ref{lem31}, we only need to prove that $ S $ is transitive on $\mathcal{F}_{XY}$, that is, to give some $ s\in S $ that maps a fixed pair in $\mathcal{F}$, say $ (U_0,\l u_0\r) $ with \[U_0=\l v_2+v_n,\, v_3,v_4\ldots, v_{n-1}\r,\ \l u_0\r=\l v_2+v_n\r,\] to any other one $ (U,\l u\r) $. Write $ U $ in the above form, with $ m_{i} $ not lying in $ \l v_1\r $, so $m_{i}=\alpha v_1+\beta v_n$ where $ \beta\ne0 $. Via adding appropriate scalar multiples of the vector $ v_i+m_i $ to each $ v_j+m_j\  (j\ne i) $ to eliminate the $ v_n $ term, rewrite $ U $ as \[U=\l h_2,\ldots, h_{i-1},\,v_i+\alpha v_1+\beta v_n,\,h_{i+1},\ldots,h_{n-1}\r, \mbox{\ with\ each\ } h_j\in H .\] Then $ \l u\r\leqslant U$ with $ \l u\r\nleq H $ (as $ \l u\r\in\Delta^{(1)} $) can be written as \[\l u\r=\l v_i+\alpha v_1+\beta v_n+\gamma\r, \mbox{\ with\ }\gamma\in\l h_2,\ldots,h_{i-1},h_{i+1},\ldots,h_{n-1}\r\leqslant H.\] 
Define $ g\in\GL(V) $ as follows and let $\bar{g}$ be its projective image\,:\vs
$v_1^g=\beta v_1$,\ \ $v_n^g=\beta v_n$, \vs
$ v_2^g=v_i+\alpha v_1+\gamma$,\ \ $  v_i^g=h_2$, and $ v_k^g=h_k$ for $ k\notin\{1,n,2,i\} $. \vs
\noindent It is straightforward to verify\,: (1) $\bar{g}$ stabilizes both $ (w,H) $ and $ (X,Y) $, so it induces some $s\in S $\,; (2) $\bar{g}$ maps $ (U_0,\l u_0\r) $ to $ (U,\l u\r) $. Then we are done.
\end{proof}

\subsection{\it Dual polar graph}\label{sec-polarspace}{~}\vskip0.03in

In this part, let $ V $ be a vector space over $ \mathbb{F}_q $ and $ B $ be a non-degenerate sesquilinear reflexive form on $ V $ such that $ W=(V,B) $ is one of the spaces listed in Table \ref{tab-pspace}. Let $ G_{W} $ be the associated group of $ W $.\vs

For $ U\leqslant V $, let $ U^\perp=\{x\in V\ |\, B\,(x,U)=0\} $. A subspace $ X $ of $ V $ is called \textit{singular}\,\footnote{In standard terminology it is called \textit{totally isotropic}, or, in certain cases, \textit{totally singular}.} if $ X\leqslant X^\perp $. By the famous Witt's Lemma, every isometry between subspaces of $ W $ extends to an isometry of $ W $ itself. It follows that all maximal singular subspaces have the same dimension $ \o $, which is referred to as the \textit{Witt index}. We assumed that $ \o\geqslant 2 $. A \textit{polar frame} of $ W $ is a set of  vectors $ \{x_1,\ldots,x_{\o}\,;\, y_1,\ldots,y_{\o}\} $ which are linearly independent and satisfies that\,: \[B(x_i,x_j)=B(y_i,y_j)=0\mbox{\ and\ } B(x_i,y_j)=\delta_{ij}.\] More information on these spaces can be found in many texts, such as \cite{CG-book}.\vs
\begin{table}[htb]
\centering
\caption{Formed spaces $ W $ of Witt index $ \o $}\label{tab-pspace}
\scalebox{0.75}{
\begin{tabular}{c|c|c|c|c|c|c}
	\toprule[1pt]
	$ W $ & $ Sp(2\o,q) $ & $\Omega(2\o+1,q)$ & $\Omega^+(2\o,q)$ & $\Omega^-(2\o+2,q)$ & $U(2\o+1,q^{\frac{1}{2}})$ & $U(2\o,q^{\frac{1}{2}})$\\
	\midrule[0.5pt]
	
	$G_W$ & P$ \Sigma $Sp$ (2\o,q) $ & P$ \Gamma $O$ (2\o+1,q) $ & P$ \Gamma $O$^{+} (2\o,q) $ & P$ \Gamma $O$^{-} (2\o+2,q) $ & P$ \Gamma $U$ (2\o+1,q^{\frac{1}{2}})$ & P$ \Gamma $U$ (2\o,q^{\frac{1}{2}})$ \\
	\bottomrule[1pt]
\end{tabular}}
\end{table}\vs

For $ 1\leqslant i\leqslant\o $, denote by $ [\stack{W}{i}]_0 $ the set of isotropic $ i $-subspaces of $ W $. The \textit{dual polar graph} $ \DD(W) $ is defined on $ [\stack{W}{\o}]_0 $, where $ X,Y $ are adjacent whenever $ dim\ X\cap Y=\o-1 $. It has $ G_W $ as a distance-transitive automorphism group. The distance $\d(X,Y)=\o-dim\ X\cap Y$ is as in Grassmann graphs.

\begin{pro}\label{prop-dualpolar}
The dual polar graph $ \DD(W) $ is geodesic-transitive.
\end{pro}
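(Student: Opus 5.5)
Following the pattern used for $\J(n,k)$ and $\G_V(k)$, we reduce via Lemma \ref{lem31} to transitivity of the stabilizer of an antipodal pair on $\mathcal{L}_{XY}$. We encode geodesics by flags and check that the two sets have the same size.

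Fix a polar frame $\{x_1,\ldots,x_\o;y_1,\ldots,y_\o\}$ and the antipodal pair $X=\l x_1,\ldots,x_\o\r$, $Y=\l y_1,\ldots,y_\o\r$, so that $X\cap Y=0$ and $\d(X,Y)=\o$. Let $\mathcal{F}_{XY}$ be the set of maximal flags $F_X: X>X_{\o-1}>\cdots>X_1$ in $X$, with $\dim X_i=i$. Each such flag determines a second flag $Y_j:=X_{\o-j}^\perp\cap Y$ in $Y$. Since $B$ restricted to $X\times Y$ is a nondegenerate pairing, $\dim Y_j=j$. We then define
\[f:\,F_X\ \rightarrow\ (X,\,X_{\o-1}+Y_1,\,X_{\o-2}+Y_2,\,\ldots,\,X_1+Y_{\o-1},\,Y).\]
The steps are as follows.

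First, each term $X_{\o-j}+Y_j$ is totally isotropic of dimension $\o$: indeed $Y_j\perp X_{\o-j}$ by construction, and $X$ and $Y$ are each totally isotropic. Consecutive terms meet in $X_{\o-j-1}+Y_j$, which has dimension $\o-1$, so $f(F_X)$ is a path of length $\o$ and hence a geodesic. The map is injective because $X_{\o-j}=(X_{\o-j}+Y_j)\cap X$.

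Second, we count. By Table \ref{tab-ci}, $|\mathcal{L}_{XY}|=c_1\cdots c_\o=[\stack{1}{1}][\stack{2}{1}]\cdots[\stack{\o}{1}]$, and this is exactly the number of maximal flags in the $\o$-space $X$. So $f$ is a bijection. (Alternatively, one can argue surjectivity directly: for a geodesic $(X=Z_0,Z_1,\ldots,Z_\o=Y)$, the distance formula forces $Z_j\cap X$ to have dimension $\o-j$ and $Z_j\cap Y$ to have dimension $j$, so $Z_j=(Z_j\cap X)+(Z_j\cap Y)$ with $Z_j\cap Y\le (Z_j\cap X)^\perp\cap Y$, and equality holds by dimension.)

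Third, we verify the group action. The stabilizer of $(X,Y)$ in $G_W$ contains the Levi-type subgroup consisting of the maps $g\oplus g^{-*}$ for $g\in\GL(X)$: it acts on $X$ by $g$ and on $Y$ by the adjoint-inverse with respect to the pairing (twisted by the field automorphism in the unitary case), and it preserves $B$. This subgroup acts on $X$ as the full $\GL(X)$, so it is transitive on maximal flags of $X$. The map $f$ is equivariant, because the construction of the $Y_j$ uses only $\perp$, which isometries preserve. Hence the stabilizer is transitive on $\mathcal{L}_{XY}$, and Lemma \ref{lem31} finishes the proof.

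The main obstacle is checking that this Levi subgroup really lies in $G_W$ uniformly across all six types in Table \ref{tab-pspace}. The delicate cases are $\Omega^-(2\o+2,q)$ and $U(2\o+1,q^{1/2})$, where $V\ne X\oplus Y$. There I would extend $g\oplus g^{-*}$ by the identity on the anisotropic complement $(X+Y)^\perp$, and invoke Witt's Lemma to confirm that the result is an isometry.
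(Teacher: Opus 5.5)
Your proposal is correct and follows essentially the paper's route: the same reduction via Lemma~\ref{lem31}, the same map $f$ built from a flag on one side together with its $\perp$-dual flag on the other, and the same count $\prod_{i=1}^{\o}[\stack{i}{1}]$. The only real difference is how you get transitivity: you use the Levi factor $g\oplus g^{-*}$, $g\in\GL(X)$, extended by the identity on $(X+Y)^\perp$, which is an isometry outright because the sum is orthogonal, so Witt's Lemma is not needed; the paper instead normalizes each flag pair to an adapted polar frame and maps frame to frame via Witt's Lemma. One small slip: $\Omega(2\o+1,q)$ is also a case with $V\ne X\oplus Y$, and the same extension by the identity handles it.
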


\begin{proof}
Let $ X,Y $ be antipodal vertices at distance $ \o $ with $ \dim\ X\cap Y=0  $, and $ \mathcal{F}_{XY} $ be the set of pairs $ (F_{X},F_{Y}) $ of maximal flags as
\[F_X : X=X_\o> X_{\o-1} >\cdots> X_{1}\mbox{\ and\ } F_Y: Y_{1}< Y_{2}<\cdots< Y_\o=Y,\] where each $ Y_i $ with $ dim\ Y_i=i $ is taken arbitrarily, while each $ X_i $ is correspondingly taken as $ X_i=Y_{\o-i}^\perp\cap X $. Since $ Y_{\o-1}^\perp<Y_{\o-2}^\perp<\cdots<Y_1^\perp $, it follows that $ X_1\leqslant X_2\leqslant\cdots\leqslant X_{\o-1} $. Further, $dim\ Y_{\o-i}^\perp=dim\ V-(\o-i) $, so $ dim\ X_i\geqslant i $\,; $ dim\ X_i+Y_{\o-i}\leqslant\o $ (the Witt index), so $ dim\ X_i\leqslant i $. Thus, $ dim\ X_i= i $.\vs

It is easy to check that the following map $ f $ is an injection from $\mathcal{F}_{XY}$ to $ \mathcal{L}_{XY}${\rm\,:}
\[f:\, (F_X,F_Y)\ \rightarrow\ (X,\,X_{\o-1} + Y_{1},\, X_{\o-2}+Y_{2},\,\cdots,\, X_{1}+Y_{\o-1},\,Y).\]
By {\rm Table \ref{tab-ci}}, $ |\mathcal{L}_{XY}|=c_1c_2\ldots c_{\o}=[\stack{1}{1}][\stack{2}{1}]\cdots [\stack{\o}{1}]=|\mathcal{F}_{XY}|$, so $ f $ is a bijection. \vs

Recall that $ \DD(W) $ has $ G_W $ as an automorphism group. Let $ S\leqslant G_W $ be the subgroup that fixes $ (X,Y) $. Then $ S $ induces an action on both $\mathcal{F}_{XY}$ and $\mathcal{L}_{XY}$. By definition, $ f $ provides a permutation equivalence between these two actions. Then, by Lemma \ref{lem31}, it is sufficient to prove that $ S $ is transitive on $\mathcal{F}_{XY}$.\vs

After a typical process of normalization (i)\,-\,(iii) as follows, we obtain a polar frame $ \{x_1,\ldots,x_{\o}\,;\, y_1,\ldots,y_{\o}\} $ of $ W $ which generates $ (F_X, F_Y) $ as \[X_i=\l x_{\o},x_{\o-1},\ldots,x_{\o-i+1}\r \mbox{\ and \ } Y_i=\l y_1,y_2,\ldots, y_i\r.\ \ \ (\ast)\]
	
(i) Take vectors $ x_\o $, $y_\o$ such that $ X_1=\l x_\o\r $, $ Y/Y_{\o-1}=\l y_\o\r $. As $ X_1\leqslant Y_{\o-1}^\perp $, if $ B(x_\o,y_\o)= 0 $, we get a singular subspace $ X_1+Y $ of dimension $ \o+1 $, not possible. Thus $ B(x_\o,y_\o)\ne 0 $, which can be set to $ 1 $.\vs

(ii) Take vectors $ x_{\o-1} $ such that $ X_2=\l x_{\o-1}, x_\o\r$ and $y_{\o-1}\in Y_{\o-1}$ such that $ Y/Y_{\o-2}=\l y_{\o-1}, y_\o \r$. Then $ B(x_{\o},y_{\o-1})=0 $, and we can set $ B(x_{\o-1},y_\o)=0 $ by redefining $ x_{\o-1} $ as $ x_{\o-1} $ plus an appropriate multiple of $ x_{\o} $. Since $ X_2\leqslant Y_{\o-2}^\perp $, it follows that $ B(x_{\o-1},y_{\o-1})\ne 0 $, which can be set to $ 1 $.\vs

(iii) In a similar manner, for $ i $ from $ \o-2 $ to $ 1 $, we can successively obtain $ x_i $ and $ y_i\in Y_i $ such that $ B(x_i,y_k)=B(x_k,y_i)=0 $ for $ k>i $ and $ B(x_i,y_i)=1 $.\vs\vs
	
Let $ (F_X', F_Y') $ be any other pair in $\mathcal{F}_{XY}$. Then we have another polar frame $ \{x_1',\ldots,x_{\o}', y_1',\ldots,y_{\o}'\} $ which generates $ (F_X', F_Y') $ in a manner similar to ($ \ast $). Let $ g $ be the linear transformation on $ X+Y $ which maps each $ x_i $ to $ x_i' $, $ y_i $ to $ y_i' $. Then $ g $ stabilizes the pair $ (X,Y) $ and maps $ (F_X,F_Y) $ to $ (F_X',F_Y') $. In fact, $ g $ is an isometry on the subspace $ (X+Y,B|_{X+Y}) $ of $ W $. By Witt's Lemma, $ g $ expands to some isometry on $ W $ lying in $ S $. Thus, $ S $ acts transitively on $\mathcal{F}_{XY}$.
\end{proof}
	
The dual polar graphs on $\Omega^+(2\o,q)$ are imprimitive, whose halved graphs are called the \textit{half dual polar graphs}. By Lemma \ref{lem32}, we have the following result.

\begin{corol}
A half dual polar graph is geodesic-transitive.
\end{corol}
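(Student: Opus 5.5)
The corollary follows from Proposition \ref{prop-dualpolar} combined with part (1) of Lemma \ref{lem32}. I would first record the structural facts. Take $W=\Omega^+(2\o,q)$. The maximal singular subspaces then split into two classes, and two of them lie in the same class exactly when $\o-\dim(X\cap Y)$ is even. So $\d(X,Y)$ in $\DD(W)$ is even precisely for pairs in the same class, and $\DD(W)$ is bipartite with these two classes as its parts. The half dual polar graph $\frac{1}{2}\DD(\Omega^+(2\o,q))$ is, by definition, the graph induced by $\DD(W)^{(2)}$ on one of these classes, that is, a halved graph of $\DD(W)$ in the sense of Section \ref{sec-pri}. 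Connectedness of $\DD(W)$ is immediate from the distance formula $\d(X,Y)=\o-\dim X\cap Y$.

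Next I would apply Proposition \ref{prop-dualpolar}: $\DD(W)$ is $G_W$-geodesic-transitive with $G_W={\rm P\Gamma O}^+(2\o,q)$. Lemma \ref{lem32}(1) then says that the halved graph is geodesic-transitive under the group $G_0$ induced by the stabilizer in $G_W$ of the class $V_0$.

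There is one point I would check carefully, since the proof of Lemma \ref{lem32}(1) passes over it. Suppose $L_0,L_0'$ are two $r$-geodesics in $\Ga_0$, lifted to $2r$-geodesics $L,L'$ in $\Ga$ that both start in $V_0$. An element $g\in G$ with $L^g=L'$ sends the initial vertex, which lies in $V_0$, to a vertex in $V_0$. Because $G$ permutes the two parts of the bipartition, $g$ therefore fixes $V_0$ setwise and so induces an element of $G_0$ mapping $L_0$ to $L_0'$. I would also confirm that the lift is really a geodesic: consecutive vertices of $L_0$ are at distance $2$ in $\Ga$, so $L$ is a walk of length $2r$, and $\d_\Ga(X_0,X_r)=2\,\d_{\Ga_0}(X_0,X_r)=2r$. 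Apart from this small verification there is no real obstacle, and the corollary follows.
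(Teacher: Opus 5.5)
Your proposal is correct and follows the paper's argument: the corollary is obtained by combining Proposition \ref{prop-dualpolar} with Lemma \ref{lem32}(1), using that the dual polar graph on $\Omega^+(2\o,q)$ is bipartite with the two families of maximal singular subspaces as its parts. Your two additional checks are valid and make explicit what the paper's proof of Lemma \ref{lem32} leaves implicit: that an element carrying one lifted geodesic to another must stabilize $V_0$, and that the lift is a geodesic because distances in the halved graph are half the distances in the original graph.
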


\subsection{\it Hamming family}\label{sec-tuples}{~}\vs

In this part, let $ \Delta=\{0,1,\ldots,m-1\} $ be a fixed set of size $ m\geqslant 2 $. The \textit{Hamming graph} $ {\H}(k,m) $ is defined on the set $ \Delta^{k} $ of $ k $-tuples on $ \Delta $, where two vertices are adjacent whenever they have $ 1 $ different entries. The distance $ \d(X,Y)=i $ whenever $ X,Y $ have $ i $ different entry. It has $ S_m \wr S_k $ as the automorphism group, where each $ S_m $ acts on a corresponding entry and $ S_k $ permutes the $ k $ entries. It is geodesic-transitive (\cite[Prop.\,2.2]{GTG}). We give a short, non-inductive proof\,:\vs

{\it Let $ X=(0,0,\ldots,0) $, $ Y=(1,1,\ldots,1)$ be two antipodal vertices at distance $ k $. Let $ \Sigma=\{1,2,\ldots,k\} $ be the set of the positions of the entries and $ \mathcal{F} $ be the set of subset-chains $ F $ as \[F:\Sigma_1\subset \Sigma_{2}\subset \cdots\subset \Sigma_{k-1}\subset\Sigma_k= \Sigma,\mbox{\ where\ } |\Sigma_i|=i.\] It is easy to check that the following map $ f $ is an injection from $\mathcal{F}$ to $ \mathcal{L}_{XY}${\rm\,:} \[f:\,F\ \rightarrow\ (X,\,X_{1},\,X_{2},\,\cdots,\,X_{k-1},\,X_k=Y),\] where each $ X_i $ has the $ j $-th entry equal to $ 1 $ or $ 0 $, respectively, if $ j\in \Sigma_i $ or $ j\notin\Sigma_i $. By {\rm Table \ref{tab-ci}}, $ |\mathcal{L}_{XY}|=c_1c_2\cdots c_k=1\times 2\times\cdots \times k=k!=|\mathcal{F}|$, so $ f $ is a bijection.\vs
	
The automorphism group $ S_k $ fixes the pair $ (X,Y) $ and induces an action on both $\mathcal{F}$ and $\mathcal{L}_{XY}$. By definition, $ f $ provides a permutation equivalence between these two actions. Clearly, $ S_k $ is transitive on $\mathcal{F}$, thus, transitive on $\mathcal{L}_{XY}$, too. By {\rm Lemma \ref{lem31}}, the graph $ \H(k,m)$ is geodesic-transitive.\qed}\vs

From the \textit{$ k $-cube} $ \H(k,2) $ we obtain new graphs\,: the \textit{halved $ k $-cube} $\frac{1}{2}\H(k,2)$, the \textit{folded $ k $-cube} $ \overline {\H}(k,2) $, and (if $ k=2k_0 $ is even) the \textit{halved folded $ k $-cube} $ \frac{1}{2}\overline \H(k,2) $. The following result follows by Lemma \ref{lem32}. 

\begin{pro}
Each of the following graphs is geodesic-transitive{\rm\,:} $ \H(k,m) $, $\frac{1}{2}\H(k,2)$, $ \overline {\H}(k,2) $ and $ \frac{1}{2}\overline \H(k,2) $.
\end{pro}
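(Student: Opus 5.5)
The statement has two parts. The graph $\H(k,m)$ has just been handled in the italicized argument above via Lemma \ref{lem31}. The other three graphs are derived from the $k$-cube $\H(k,2)$, so I plan to obtain them from Lemma \ref{lem32}. The only real work is checking that the hypotheses of that lemma hold, that is, that $\H(k,2)$ is bipartite or antipodal as needed. I must also make sure the derived graphs really are the halved and folded graphs in the sense of Section \ref{sec-pri}.

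For $\H(k,m)$, the subgroup $S_k$ of $S_m\wr S_k$ fixes $X=(0,\ldots,0)$ and $Y=(1,\ldots,1)$. Through the bijection $f$ it acts on $\mathcal{L}_{XY}$ exactly as it acts on maximal chains $\Sigma_1\subset\cdots\subset\Sigma_k$ of $\Sigma$. That action is regular, so it is transitive, and Lemma \ref{lem31} gives geodesic-transitivity. The one point to verify is that $f$ is injective and lands in $\mathcal{L}_{XY}$. This holds because $\d(X_i,X_{i+1})=1$ and $\d(X,X_i)=i$. Equality of cardinalities then follows from $c_i=i$ in Table \ref{tab-ci}.

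Next, $\H(k,2)$ is bipartite, since parity of weight gives the bipartition. Part (1) of Lemma \ref{lem32} therefore shows that the halved cube $\frac12\H(k,2)$ is geodesic-transitive.

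$\H(k,2)$ is also antipodal: each vertex $X$ has the unique vertex at distance $k$ given by its complement $X+(1,\ldots,1)$, and being equal or complementary is an equivalence relation. Its antipodal quotient is the folded cube $\overline{\H}(k,2)$, so part (2) of Lemma \ref{lem32} applies. When $k\le 3$ the folded cube is complete, and the lemma already covers that case.

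For $\frac12\overline{\H}(2k_0,2)$, I first note that when $k=2k_0$ is even, complementation preserves weight parity. Hence $\overline{\H}(2k_0,2)$ is again bipartite, and applying part (1) of Lemma \ref{lem32} to it, now known to be geodesic-transitive, finishes the proof.

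The step needing the most care is the identification of the halved folded cube. I need to confirm that halving the folded graph gives the graph named in Table \ref{tab-main}, rather than folding the halved graph, which is a possibly different graph. For even $k$ the two operations commute on the vertex set, since both are quotients or restrictions compatible with the parity classes. So I would check directly that both give the same adjacency: vertices at distance $2$ in $\overline{\H}(k,2)$, that is, differing in $2$ or $k-2$ coordinates.
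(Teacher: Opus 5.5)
Your proposal is correct and follows the paper's own route. For $\H(k,m)$ you use the chain bijection $f$, the regular action of $S_k$, and Lemma~\ref{lem31}; for the other three graphs you apply Lemma~\ref{lem32}, halving the bipartite cube, folding the antipodal cube, and halving the folded cube, which is bipartite when $k$ is even. Your explicit checks of bipartiteness and antipodality, and your check that halving the folded cube agrees with folding the halved cube (both give adjacency at Hamming distance $2$ or $k-2$ on pairs of complementary even-weight vectors), are details the paper leaves implicit, and they are right.
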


\subsection{\it Forms graph}\label{sec-matirx}{~}\vs

In this part, let $ U=F^m $, $ V=F^k $ be vector spaces over $ F=\mathbb{F}_q $, $ m\leqslant k $. Let $ \mathcal{B} $ be the vector space of bilinear maps from $ U\times V $ to $ F $ (which is isomorphic to ($ U\otimes V)^\ast $). The rank $ \rk(f) $ is the codimension of each of its null spaces (in $ U $ and $ V $). The \textit{bilinear forms graph} $ {\BF}(m,k,q) $ is defined on $ \mathcal{B}$, where $ X,Y $ are adjacent whenever $ \rk(X-Y)=1 $. The distance is $ \d(X,Y)=\rk (X-Y) $. It is distance-transitive and has an automorphism group \[ F^{mk}{:}(\GL(U)\circ\GL(V)),\] where $ F^{mk} $ consists of the translations on $ U\otimes V $, and for $ g=(h,k)/(\lambda,\lambda^{-1})\in\GL(U)\circ\GL(V) $, $ X\in \mathcal{B}$, the image $ X^g $ is the map that takes the same value on $ U^{\lambda\cdot h}\times V^{\lambda^{-1}\cdot k} $ as $ X $ does on $ U\times V $.\vs

Let $\mathcal{A}$ be the vector space  (of dimension $ k(k-1)/2 $ over $ F $) of alternating forms on $ V $. The \textit{alternating forms graph} $ {\AF}(k,q) $ is defined on $ \mathcal{A} $, where $ X,Y $ are adjacent whenever $ \rk(X-Y)=2 $. The distance is$ \d(X,Y)=\frac{1}{2}\rk (X-Y) $. It is distance-transitive and has an automorphism group $ F^{n(n-1/2)}{:}\GL(V) $. \vs

In the case where $ q=r^2 $, let $ \sigma $ be the field automorphism of $ F $ of order $ 2 $, and let $\mathcal{H}$ be the vector space  (of dimension $ k^2 $ over $ \mathbb{F}_r $) of Hermitian forms on $ V $. The \textit{Hermitian forms graph} $ {\HF}(k,r) $ is defined on $ \mathcal{H} $, where $ X,Y $ are adjacent whenever $ \rk(X-Y)=1 $. The distance is $ \d(X,Y)=\rk (X-Y) $. It is distance-transitive and has an automorphism group $ \mathbb{F}_{r}^{k^2}{:}\GL(V) $.\vs

\begin{pro}
Each of the forms graphs $ {\BF}(m,k,q) $, $ {\AF}(k,q) $ and $ {\HF}\,(k,r) $ is geodesic-transitive.
\end{pro}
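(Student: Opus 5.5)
We follow the template used throughout this section. By Lemma~\ref{lem31} it suffices, for each forms graph, to fix one antipodal pair $(X,Y)$, parametrize $\mathcal{L}_{XY}$ by a combinatorial set $\mathcal{F}_{XY}$ of the same size $c_1\cdots c_\d$, and show that the stabilizer of $(X,Y)$ in the given automorphism group acts transitively on $\mathcal{F}_{XY}$. Translations are in the group, so we take $X=0$ and $Y$ a form of full rank $\d$. A path $0=X_0,X_1,\dots,X_\d=Y$ is a geodesic exactly when $\d(X_i,0)=i$ and $\d(X_i,Y)=\d-i$ for all $i$. So the problem becomes that of describing the \emph{rank-additive} decompositions $Y=D_1+\cdots+D_\d$, where $D_i=X_i-X_{i-1}$ has rank $1$ (or rank $2$ in the alternating case) and every partial sum has rank equal to its number of summands.

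\textbf{Bilinear forms $\BF(m,k,q)$, $m\leqslant k$.} Here we take $\d=m$ and $Y$ nondegenerate on $U$, with right radical $R\leqslant V$ of dimension $k-m$. A rank-additive sum $Y=\sum D_i$ splits $U^\ast$ and $(V/R)^\ast$ compatibly. Equivalently, the geodesic is determined by two flags: the left radicals $\ker_U(Y-X_i)$ form a flag $U=A_0>A_1>\cdots>A_m=0$, and the images of the left radicals of the $X_i$ form a flag $0=B_0<\cdots<B_m=U$, and these two flags are in general position. Since $X_i$ is the restriction of $Y$ along the direct-sum decomposition $U=A_i\oplus B_i$ (more precisely, $X_i(u,v)=Y(\pi_i u,v)$ with $\pi_i$ the projection onto the complement of $B_i$ along $A_i$), the pair of flags determines the geodesic.

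We will therefore take $\mathcal{F}_{XY}$ to be the set of pairs of complete flags in $U$ in general position, which is the same as an ordered basis up to the diagonal-times-unipotent ambiguity. We then count: there are $\prod[\stack{i}{1}]$ choices for the first flag, and for each of them $q^{\binom{m}{2}}\prod[\stack{i}{1}]$ choices for the second. This should match $\prod c_i=\prod q^{i-1}[\stack{i}{1}]$ from Table~\ref{tab-ci}. Injectivity of the map together with this count gives bijectivity.

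For transitivity, the stabilizer of $(0,Y)$ contains $\{(h,k): Y^{(h,k)}=Y\}$. Every $h\in\GL(U)$ extends to such a pair by choosing $k$ suitably on $V/R$, since $Y$ identifies $U$ with $(V/R)^\ast$. Since $\GL(U)$ is transitive on pairs of opposite complete flags (these correspond to the cosets $gT$ with $T$ the torus, i.e.\ to the big Bruhat cell), we obtain transitivity.

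\textbf{Hermitian forms $\HF(k,r)$.} For Hermitian forms the same scheme applies. Now $Y$ is a nondegenerate Hermitian form on $V$, and a rank-additive decomposition corresponds to a single complete flag $0<V_1<\cdots<V_k=V$ together with a choice at each step. The natural guess is that $X_i$ is the form with radical $W_i$, where $W_1>\cdots$ is another flag which, by symmetry, must be $Y$-perpendicular-related to the first. We will match the count with the product $\prod c_i$ of the Hermitian intersection numbers from \cite{DRG-book}, namely $c_i=r^{i-1}(r^i-(-1)^i)/(r+1)$. The parametrization we aim for is ordered $Y$-orthogonal bases of nonisotropic vectors, up to scaling: decompose $Y=\sum \lambda_i\,\overline{e_i^\ast}\otimes e_i^\ast$ with the $e_i$ pairwise orthogonal. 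Transitivity then comes from the unitary group of $Y$ inside $\GL(V)$ (the stabilizer of $0$ and $Y$), via Witt's Lemma as in Proposition~\ref{prop-dualpolar}.

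\textbf{Alternating forms $\AF(k,q)$.} For alternating forms we do the same with hyperbolic pairs. $Y$ has rank $2\d$, and geodesics correspond to ordered decompositions of the nondegenerate part into $\d$ mutually orthogonal hyperbolic planes, with each plane counted once. The symplectic group fixing $Y$, extended arbitrarily on the radical, acts transitively on these decompositions, again by Witt's Lemma.

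\textbf{Main obstacle.} The main obstacle is proving that these parametrizations are exactly right: that every geodesic arises from such an orthogonal or flag decomposition, and that the counts equal $\prod c_i$. For $\HF$ and $\AF$ the orthogonal-basis description may fail to be injective, because different orthogonal frames can give the same sequence of partial sums; in that case $\mathcal{F}_{XY}$ should instead be the set of chains of nondegenerate subspaces $0<P_1<\cdots<P_\d$ (with $\dim P_i=i$, respectively $2i$, relative to $Y$), with $X_i=Y|_{P_i}$ extended by zero on $P_i^{\perp}$. Transitivity on such chains follows from Witt's Lemma. Before writing the proof we would therefore verify the count of such chains against $\prod c_i$.
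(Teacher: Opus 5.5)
Your strategy is the paper's: reduce via Lemma~\ref{lem31} to $X=0$ and $Y$ of full rank, parametrize $\mathcal{L}_{XY}$ by pairs of opposite complete flags of $U$ for $\BF$ and by chains of nondegenerate subspaces for $\AF$ and $\HF$, and get transitivity from $\GL(U)$, resp.\ from the isometry group of $Y$ via Witt's Lemma. But the step that upgrades your injective parametrization to a bijection is the cardinality count, and it is wrong for $\BF$ and missing for $\AF$ and $\HF$.

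For $\BF$, the number of complete flags opposite a fixed one is $q^{\binom{m}{2}}$. You can see this by choosing successive complements: $q^{m-1}\cdot q^{m-2}\cdots q$ choices. It is not $q^{\binom{m}{2}}\prod[\stack{i}{1}]$. With your figure the total is $q^{\binom{m}{2}}(\prod[\stack{i}{1}])^2\neq\prod c_i$, so the argument as written does not close. Your own later remark, that opposite pairs correspond to cosets $gT$, gives the right total $|\GL_m(q)|/(q-1)^m=q^{\binom{m}{2}}\prod_{i=1}^m[\stack{i}{1}]=\prod c_i$. By contrast, ``ordered basis up to diagonal-times-unipotent'' describes $\GL(U)/B$, which is a single flag. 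Also fix the indexing:
\begin{itemize}
\item $\ker_U(Y-X_i)$ has dimension $i$ and increases from $0$ to $U$, while $\ker_U X_i$ decreases from $U$ to $0$;
\item the two are complementary by rank-additivity;
\item $X_i(u,v)=Y(\pi_iu,v)$, where $\pi_i$ is the projection onto $\ker_U(Y-X_i)$ along $\ker_U X_i$.
\end{itemize}

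For $\AF$ and $\HF$, the count you postpone is the real content of the proof. It does work, and this is how the paper argues. The number $c_i$ of \cite[Thm.\,9.5.6, 9.5.7]{DRG-book} equals:
\begin{itemize}
\item for $\AF$, the number of nondegenerate $2$-spaces in a nondegenerate symplectic $2i$-space, namely $q^{2i-2}(q^{2i}-1)/(q^2-1)$;
\item for $\HF$, the number of non-isotropic points in a nondegenerate Hermitian $i$-space over $\mathbb{F}_{r^2}$, namely $r^{i-1}(r^i-(-1)^i)/(r+1)$.
\end{itemize}
Taking orthogonal complements inside $P_i$, this is exactly the number of choices of $P_{i-1}$ given $P_i$, so there are $\prod c_i$ chains.

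Your injectivity worry is unfounded for ordered orthogonal decompositions into non-isotropic lines (resp.\ nondegenerate planes), since these are in bijection with the chains. However, for $\AF(k,q)$ with $k$ odd you must take the chains inside a fixed complement $V'$ of $\mathrm{rad}\,Y$, as the paper does. Otherwise $\langle e_1,f_1\rangle$ and $\langle e_1+v_0,f_1\rangle$, with $0\neq v_0\in\mathrm{rad}\,Y$, are distinct nondegenerate planes giving the same $X_1$, and chains in all of $V$ overcount. Nothing is lost by this restriction: rank-additivity forces $\mathrm{rad}\,Y\leqslant\mathrm{rad}\,X_i$, so each $X_i$ is determined by $X_i|_{V'}$.
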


\begin{proof}
(1) We first consider the bilinear forms graph $ {\BF}(m,k,q) $. Take antipodal vertices $ X,Y $ at distance $ m $, which, with respect to given bases $ \eta_U $ for $ U $ and $ \eta_V $ for $ V $, are represented by matrices $M_X=(0_{m\times k}) $, $ M_Y=(I_{m}\  0_{m,k-m}) $. Let $\mathcal{F}$ be the set of pairs $ (F,\bar{F}) $ of (maximal) flags as \[F: U>U_{m-1}> U_{m-2}>\cdots>U_{1}, \mbox{\ and\ } \bar{F} : \bar{U}_{m-1}<\bar{U}_{m-2}<\cdots <\bar{U}_{1}<U,\] where $ dim\ U_i= dim\ \bar{U}_{m-i}=i $ and $ \bar{U}_i $ is a complementary subspace of $ U_i $ in $ U $.\vs

It is easy to check that the following map $ f $ is an injection from $\mathcal{F}$ to $ \mathcal{L}_{XY}${\rm\,:} \[f:\ (F,\bar{F})\ \rightarrow\ (X,Y_{m-1},Y_{m-2},\ldots,Y_{1},Y),\] where each $ Y_i $ has $ U_i $ as the null space in $ V $ and $ \bar{U}_i $ as the subspace on which $ Y_i=Y $. The number of choices for $ F $ is $ [\stack{m}{1}][\stack{m-1}{1}]\cdots[\stack{2}{1}] $\,; meanwhile, for a fixed $ F $, the number of choices for $\bar{F}$ is $ q^{m-1}q^{m-2}\cdots q^1 $. By table \ref{tab-ci}, $|\mathcal{L}_{XY}|=c_1c_2\cdots c_{m}=(q^{0}[\stack{1}{1}])(q^{1}[\stack{2}{1}])\cdots (q^{m-1}[\stack{m}{1}])=|\mathcal{F}|$, so $ f $ is a bijection.\vs

Recall that $ {\BF}(m,k,q) $ has $ G= \GL(U)\circ\GL(V) $ as an automorphism group. Let $ S\leqslant G $ be the subgroup that fixes $ (X,Y) $, which induces an action on both $\mathcal{F}$ and $\mathcal{L}_{XY}$. By definition, $ f $ provides a permutation equivalence between these two actions. It is sufficient to prove that $ S $ is transitive on $\mathcal{F}$ by Lemma \ref{lem31}.\vs
	
For any $ A\in\GL(U) $, there is a corresponding $B=
\begin{pmatrix} 
	(A^{\sf T})^{-1} & 0_{m,k-m}\\ 
	0_{k-m,m} &  I_{k-m} & 
\end{pmatrix}\in\GL(V) $ such that $ X,Y $, with respect to bases $ A\eta_U $, $ B\eta_V $, are respectively represented by matrices \[A^{\sf T}M_X B=0=M_X,\mbox{\ and\ } A^{\sf T}M_Y B=A^{\sf T}(I_{m}\  0_{m,k-m})B=M_Y.\]
Thus, the image of $ (A,B) $ in $ G $ fixes the pair $ (X,Y) $. It follows that $ S $ contains all the projective transformations on  $ U $. Consequently, $ S $ is transitive on the maximal flags of $ U $, and the stabilizer of a fixed flag $ F $, as a parabolic subgroup containing the unipotent radical, is transitive on those flags $\bar{F}$ consisting of complementary subspaces of each space lying in $ F $. Therefore, $ S $ is transitive on $\mathcal{F}$.\vs\vs

(2) The proof for $\Ga= {\AF}(k,q)$ or ${\HF}\,(k,r) $ proceeds in a manner similar to $ {\BF}(m,k,q) $. For convenience, we reuse the notations $\mathcal{F},f,S, X,Y $ in this part.\vs

Let $ V=V'\oplus V_0 $ be a fixed factorization of $ V $, where, if $\Ga= {\AF}(k,q)$ and $ k $ is odd, then $ dim\ V'=k-1 $\,; otherwise, $ V'=V $. Let $ X $ be the zero-form and $ Y $ be a non-degenerate form on $ V' $ with the null space in $ V $ being $ V_0 $. Then $ X,Y $ are antipodal vertices. Recall that $ \Ga $ has $ \GL(V) $ as an automorphism group. Let $ S\leqslant \GL(V) $ be the subgroup consisting of elements which fixes $ V' $, $ V_0 $, and acts as an isometry on $ (V',Y) $. Then $ S $ fixes the pair $ (X,Y) $.\vs

For $\Ga= {\AF}(k,q) $, let $\mathcal{F}$ be the set of flags $ F $ as \[F: V'>V_{\lfloor k/2\rfloor-1}>V_{\lfloor k/2\rfloor-2}>\cdots> V_1,\] where $ dim\ V_i=2i $ and the restriction $ Y|_{V_i} $ is non-degenerate. Then each $ V_i $ leads to a factorization $ V'=V_i\oplus V_i^{\perp} $, where $ V_i^{\perp}:=\{v\in V'\,|\,Y(v,V_i)=0\} $. It is easy to check that the following map $ f $ is an injection from $\mathcal{F}$ to $ \mathcal{L}_{XY}${\rm\,:} \[f:\ F\ \rightarrow\ (X,Y_{\lfloor k/2\rfloor-1},Y_{\lfloor k/2\rfloor-2},\ldots,Y_{1},Y),\] where each $ Y_i $ has $ V_i+V_0 $ as the null space and $ V_i^{\perp} $ as the subspace on which $ Y_i=Y $. By \cite[Thm.\,9.5.6]{DRG-book}, the intersection number $ c_i=q^{2i-2}(q^{2i}-1)/(q^2-1) $ is equal to the number of non-degenerate $ 2 $-spaces contained in a given non-degenerate $ 2i $-space. Thus, $|\mathcal{L}_{XY}|=c_1c_2\cdots c_{\lfloor k/2\rfloor}=|\mathcal{F}|$, and $ f $ is a bijection. Moreover, by definition $ f $ provides a permutation equivalence between the actions of $ S $ on $\mathcal{F}$ and on $\mathcal{L}_{XY}$. Since $ S $ has the isometry group of $ (V',Y) $ as a subgroup, it follows by Witt's Lemma that $ S $ is transitive on $\mathcal{F}$, and we are done.\vs
    
For $ \Ga={\HF}\,(k,r) $, let $\mathcal{F}$ be the set of flags $ F $ as \[F: V>V_{k-1}>V_{k-2}>\cdots> V_1,\] where $dim\ V_i=i $ and the restriction $Y|_{V_i} $ is non-degenerate. The number $ c_i $ is given in \cite[Thm.\,9.5.7]{DRG-book}. In a similar manner, there exists a bijection from $ \mathcal{F} $ to $\mathcal{L}_{XY}$, and the equivalent actions of $ S $ on $ \mathcal{F} $ and $\mathcal{L}_{XY}$ are both transitive.
\end{proof}

\noindent\textbf{Remark.} A forms graph $ \Ga $ can also be defined on matrices over $ F $\,: respectively, $ {\BF}(m,k,q) $, $ {\AF}(k,q) $ or $ {\HF}(k,r) $ is defined on the \textit{rectangular} $ (m\times k) $ matrices, the \textit{skew symmetric} $ (k\times k) $-matrices (with zero diagonal) or the \textit{Hermitian} $ (k\times k) $-matrices with $ M^{\sf T}=M^{\sigma} $, where $ X,Y $ are adjacent whenever $ \rk(X-Y) $ is equal to $ 1 $, $ 2 $ or $ 1 $. From this perspective, for any geodesic of length $ \ell $, there is an automorphism that maps it to $ (0,L_{1},\ldots,L_{\ell}) $, where, if $\Ga= {\BF}(m,k,q)$ or $ {\HF}(k,r) $, then $L_{i}=\begin{pmatrix} I_{i} & 0 \\ 0 & 0 \end{pmatrix}$\,; if $ \Ga={\AF}(k,q) $, then $ L_{i}=\begin{pmatrix}\begin{smallmatrix}
0 & I_{i} & 0 \\ -I_{i} & 0 & 0\\ 0 & 0 & 0 \end{smallmatrix}\end{pmatrix}$.\vs

\subsection{\it Generalized polygon}\label{sec-gp}{~}\vskip0.03in

A \textit{generalized $ n $-gon} is a partial linear space $\Ga=(\mathcal{P},\mathcal{L},\,{\bf I})$ with point set $\mathcal{P}$, line set $\mathcal{L}$ and symmetric relation $ {\bf I} $, whose incidence graph has diameter $ n $ and girth $ 2n $ (there are several equivalent definitions, cf.~\cite[\S\,6.5]{DRG-book} and \cite[\S\,1]{GP-book}). The \textit{dual} of $ \Ga $ is the space $ \Ga^D=(\mathcal{L},\mathcal{P},\,{\bf I}) $ which interchanges the roles of $\mathcal{P}$ and $\mathcal{L}$.\vs

Here, we regard the generalized polygon $ \Ga $ as its \textit{collinearity graph} on $\mathcal{P}$ which has diameter $\d=\lfloor n/2\rfloor$, and denote by $ {\rm I}(\Ga) $ the incidence graph of $ \Ga $. A \textit{collineation} of $ {\rm I}(\Ga) $ is a graph automorphism which preserves the two biparts $\mathcal{P}$, $\mathcal{L}$. It is clear that any collineation of $ {\rm I}(\Ga) $ induces an automorphism of $ \Ga $.\vs

The finite distance-transitive generalized polygons were classified by Buekenhout and Van Maldeghem \cite{GP}. Those with diameter $ \d\geqslant 3 $ are the following generalized hexagons (\GH), octagons (\GO), dodecagons (\GD) and their duals. The associated group $ G $ is also given. All these generalized polygons exhibit a high degree of symmetry, which implies geodesic-transitivity, as detailed below.

\begin{table}[htb]
\centering
\caption{Distance-transitive generalized polygons with $ \d\geqslant 3 $}
\scalebox{0.8}{
\begin{tabular}{c|c|c|c|c|c|c}
	\toprule[1pt]
	$ (\Ga,\d) $ & $(\GH(1,q),3)$ & $ (\GO(1,q),4) $ & $(\GD(1,q),6)$ & $(\GH(q,q),3)$ & $(\GH(q,q^3),3)$  & $(\GO(q,q^2),4)$  \\
	\midrule[0.5pt]
	
	$G$ & $ {\rm PSL}(3,q) $ & $ {\rm Sp}_4(q), q=2^a $ & $ {\rm G}_2(q), q=3^a $ & $ {\rm G}_2(q) $ & $ ^3{\rm D}_4(q) $ & $ ^2{\rm F}_4(q)  $ \\
	\bottomrule[1pt]
\end{tabular}}
\end{table}

(a) Let $ \Ga\in\{\GH(1,q), \GO(1,q), \GD(1,q)\} $ be a distance-transitive generalized $ 2\d $-gon with diameter $ \d $. Then $ \Ga $ is isomorphic to $ I(X) $ where $ X $ is a self-dual generalized $\d$-gon of order $ (q,q) $\,; the dual $ \Ga^D $ is of order $ (q,1) $ and is isomorphic to the \textit{line graph} of $ I(X) $, namely, $ \Ga^D $ has the incident point-line pairs of $ X $ as vertices and two vertices are adjacent whenever they have common element. Let $ L: (V_0,V_1,\ldots,V_{\ell}) $ be a geodesic in $ \Ga^D $ of length $ \ell\leqslant\d $. Then for $ 0<i<\ell $, \[V_i=(V_{i-1}\cap V_i)\cup (V_{i+1}\cap V_i),\] and consequently, there exists an arc $ (a_0,a_1,\ldots,a_{\ell+1}) $ in $ \Ga $ of length $ \ell+1\leqslant \d+1 $ which induces $ L $ as each $ V_i=\{a_i,a_{i+1}\} $. According to \cite[Thm.\,1.1]{GTG}, the graph $ \Ga $ is $ (\d+1) $-arc transitive, which implies that both $ \Ga $ and $ \Ga^D $ are geodesic-transitive. \vs

{\rm (b)} Let $ \Ga\in\{\GH(q,q),\GH(q,q^3), \GO(q,q^2)\} $ be a distance-transitive $ n $-gon with diameter $ \frac{n}{2} $. By a basic property of generalized $ n $-gons, any two vertices in $ \Ga $ at distance $i<\frac{n}{2}$ are connected by a unique geodesic. It follows that a distance-transitive group of $ \Ga $ is transitive on all geodesics of length $ \ell<\frac{n}{2} $. We thus only need to show that $ \Ga $ (resp.~$ \Ga^D $) is diameter-transitive. \vs

According to \cite[\S\,2, \S\,4]{GP-book}, $ \Ga $ is known as a {\it classical} one satisfying the {\textit{Moufang condition}}{\rm\,:} for any geodesic $ \gamma=(v_0,v_1,\ldots,v_{n-2}) $ in $ {\rm I}(\Ga) $ of length $ n-2 $ and $ v_{n-1} $ incident with $ v_{n-2} $, $ v_{n-1}\ne v_{n-3} $, the group of all \textit{$ \gamma $-elations}, i.e., the collineations of ${\rm I}(\Ga)$ which fixes all elements incident with at least one element of $ \gamma $, acts transitively on the set of elements incident with $ v_{n-1} $ but distinct from $ v_{n-2} $.\vs

Let $ \alpha=(u_{-1},u_0,\ldots,u_{n-3}) $, $ \beta=(u_0,u_1,\ldots,u_{n-2}) $ be geodesics in ${\rm I}(\Ga)$, where $ u_0 $ is a point (resp.~a line). By Moufang condition, the group of $ \alpha $-elations fixes $ (u_{-1},u_0,\ldots,u_{n-2}) $ and is transitive on geodesics of form $ (u_0,u_1,\ldots,u_{n-2},x) $\,; the group of $ \beta $-elations fixes each $ (u_0,u_1,\ldots,u_{n-2},x) $ and is transitive on geodesics of form $ (u_0,u_1,\ldots,u_{n-2},x,y) $. Thus, in particular, the collineation group of ${\rm I}(\Ga)$ contains a subgroup that fixes $ (u_0,u_1,\ldots,u_{n-2}) $ and is transitive on these diameters $ (u_0,u_1,\ldots,u_{n-2},x,y) $. Correspondingly, $ L:(u_0,u_2,\ldots,u_{n-2}) $ is a fixed geodesic of length $ \frac{n-2}{2} $ in $ \Ga $ (resp.\,$\Ga^D$), and $ \Aut\,\Ga $ (resp.\,$\Aut\,\Ga^D$) contains a subgroup that fixes $ L $ and is transitive on the set of diameters $ (u_0,u_2,\ldots,u_{\frac{n-2}{2}},y) $ obtained by extending $ L $. Then $ \Ga $ (resp.~$ \Ga^D $) is diameter-transitive. We are done.\vs

In conclusion, we have the following result.

\begin{pro}
Each of the distance-transitive generalized polygon with diameter $ \d\geqslant 3 $ is geodesic-transitive.
\end{pro}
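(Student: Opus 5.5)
The statement only assembles the two cases (a) and (b) treated just above. The plan is to check that those two cases exhaust the table of distance-transitive generalized polygons with $\d\geqslant 3$, and that each argument indeed gives transitivity on geodesics of every length. By the classification of Buekenhout and Van Maldeghem \cite{GP}, such a $\Ga$ is one of $\GH(1,q)$, $\GO(1,q)$, $\GD(1,q)$, $\GH(q,q)$, $\GH(q,q^3)$, $\GO(q,q^2)$, or one of their duals. In every case I will use Lemma \ref{lem31}: once $\Ga$ is distance-transitive, it is enough to produce a group fixing a pair of antipodal vertices that is transitive on the diameters between them. Equivalently, it is enough to show that $\Aut\,\Ga$ is transitive on diameters, since every shorter geodesic extends to a diameter.

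For the thin-line families (order $(1,q)$) and their duals (order $(q,1)$), I follow (a). $\Ga$ is the incidence graph of a self-dual generalized $\d$-gon of order $(q,q)$, and $\Ga^D$ is the line graph of that incidence graph. A geodesic in $\Ga^D$ of length $\ell\leqslant\d$ lifts to an $(\ell+1)$-arc of $\Ga$. This lift works because consecutive flags share an element and a geodesic cannot revisit a flag through the same element. I then invoke \cite[Thm.\,1.1]{GTG}, which gives $(\d+1)$-arc transitivity of $\Ga$. This yields geodesic-transitivity of $\Ga$ directly, since geodesics are arcs. It also yields geodesic-transitivity of $\Ga^D$, because the lift is injective and equivariant. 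The point to double-check is that the lifting map sends geodesics to arcs of the correct length $\ell+1\leqslant\d+1$, which is within the range of the arc-transitivity.

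For the thick families, I follow (b). Below the diameter, geodesics are unique, since the girth of ${\rm I}(\Ga)$ is $2n$. So transitivity on geodesics of length $<\frac{n}{2}$ reduces to distance-transitivity. For diameters, I use the Moufang condition, which holds because these polygons are classical. Given a path $(u_0,\ldots,u_{n-2})$ in ${\rm I}(\Ga)$, I apply two successive root-elation groups: one fixes the path and moves the penultimate element $x$, and the other then moves the final point $y$ with $x$ fixed. Together they give transitivity on all extensions to the $n$-path $(u_0,\ldots,u_{n-2},x,y)$. Combining this with transitivity on the $(\frac{n}{2}-1)$-geodesics, obtained from distance-transitivity plus uniqueness, gives diameter-transitivity of $\Ga$ and of $\Ga^D$. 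The latter follows by starting $u_0$ on a line.

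The main obstacle is the diameter step in (b). A diameter $(u_0,u_2,\ldots,u_{n-2},y)$ of the collinearity graph must be shown to correspond bijectively to an $n$-path in ${\rm I}(\Ga)$. The intermediate line through $u_{n-2}$ and $y$ must be unique and distinct from $u_{n-3}$. Both facts follow from the collinearity-graph distance being $\frac{n}{2}$ and girth $2n$. One also needs the two elation groups to act as claimed. For the second group this means checking that $\beta$-elations really fix $x$, because $x$ is incident with $u_{n-2}\in\beta$, so it is fixed by definition. With these checks, combining (a) and (b) gives the proposition.
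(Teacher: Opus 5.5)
Your proposal is correct and follows the paper's own argument essentially step for step. Case (a) lifts geodesics of the line graph $\Ga^D$ to $(\ell+1)$-arcs of $\Ga={\rm I}(X)$ and invokes $(\d+1)$-arc-transitivity from \cite[Thm.\,1.1]{GTG}. Case (b) combines uniqueness of geodesics below the diameter with the two successive root-elation groups, for $\alpha=(u_{-1},\ldots,u_{n-3})$ and $\beta=(u_0,\ldots,u_{n-2})$, to get transitivity on diameters. The extra checks you flag (the lift is an arc, the line through $u_{n-2}$ and $y$ differs from $u_{n-3}$, and $\beta$-elations fix $x$) are details the paper leaves implicit, and they hold as you describe.
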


\section{Polar Grassmann Graphs}\label{sec-pgg}
In this section we extend our study to polar Grassmann graphs. These graphs are derived from \textit{polar spaces} (for an axiomatic definition, see \cite[Sect.\,2.4]{pgg2}).\vs\vs\vs

\subsection{\it Graphs derived from W}{~}\vskip0.03in

Following the notation established in {\rm Section \ref{sec-polarspace}}, let $ W=(V,B) $ be one of the spaces of Witt index $ \o\geqslant 2 $ listed in {\rm Table \ref{tab-pspace}}, and denote by $ [\stack{W}{k}]_0 $ the set of singular $ k $-subspaces of $ W $ where $ 1\leqslant k\leqslant \o $. Then $ U\leqslant U^\perp $ for any $ U\in[\stack{W}{k}]_0 $. The space $ W $ defines a \textit{polar space} by considering $ [\stack{W}{1}]_0 $ and $ [\stack{W}{2}]_0 $, respectively, as the set of \textit{points} and \textit{lines}, and considering the containment relation as the \textit{incidence relation}. It is known that most polar spaces can be obtained in this way \cite{Tits}. We focus on graphs derived from such spaces.

\begin{defi}
The \textbf{polar Grassmann graph} $ \PG_{W}(k) $ is defined on $ [\stack{W}{k}]_0 $, where, if $k=\o$, then $ u,v $ are adjacent whenever $dim\ u\cap v= k-1${\rm\,;} if $ k<\o $, then $ u,v $ are adjacent whenever $u+v\in[\stack{W}{k+1}]_0$. In particular, $ \PG_{W}(1) $ is referred to as a \textbf{polar graph}, and $ \PG_{W}(\o) $ is referred to as a \textbf{dual polar graph}.
\end{defi}

The dual polar graph $ \PG_{W}(\o) $ was studied in Section \ref{sec-polarspace}. We thus only consider the graphs formed by non-maximal singular subspaces and assume that $ k<\o $. The description of automorphisms of these graphs is given in \cite[Sect.\,4.6]{pgg3} (see also \cite{pgg1})\,; such automorphisms are induced by collineations of the associated polar spaces. It follows by Theorem 8.6 in 
Tits \cite{Tits} that the full automorphism group is exactly the classical group $G_W $ given in Table \ref{tab-pspace}.\vs\vs\vs

\subsection{\it Distance and geodesics in $ \PG_{W}(k) $, $ k<\o $}{~}\vs
Let $ \Ga=\PG_{W}(k) $, $ k<\o $. The distance function of $ \Ga $ is given in \cite[Lem.\,1]{pgg1}. Here, we demonstrate it by revealing the geodesic form. Note that any two vertices are adjacent in $ \Ga $ only if their intersection has dimension $ k-1 $. Hence $ \Ga $ is regarded as a subgraph of the Grassmann graph $ \G_V(k) $ obtained by deleting some of the vertices and the edges. It follows that for any vertices $ X,Y $ of $ \Ga $, \[\d_\Ga(X,Y)\geqslant\d_{\G_V(k)}(X,Y)=k-dim\ X\cap Y.\] There are two cases\,: (1) $ \d_\Ga(X,Y)=\d_{\G_V(k)}(X,Y) $\,; (2) $ \d_\Ga(X,Y)>\d_{\G_V(k)}(X,Y) $. \vs\vs

In case (1), any geodesic $ L $ in $ \Ga $ from $ X $ to $ Y $ is realized as a geodesic in $ \G_V(k) $. Geodesics in $ \G_V(k) $ are described in Section \ref{sec-prospace}. It follows that we can take a basis $ \eta=\{w_1,\ldots,w_{k-m}\} $ of $ X\cap Y $, and extend it to a basis $\eta_X=\eta\cup\{x_1,\ldots,x_m\} $ of $ X $ and a basis $\eta_Y=\eta\cup\{y_1,\ldots,y_m\} $ of $ Y $ such that $ L $ is determined as \vs\vs
\hskip 0.5in $L:(X_0=X,\,X_1,\ldots,X_{m-1},\,X_m=Y) $, where for $ 0\leqslant i \leqslant m $,\vs
\hskip 0.5in $X_i=\l x_1,\ldots x_{m-i},\,y_{m-i+1},\ldots,y_{m},\,w_1,\ldots,w_{k-m}\r$.\hskip1in({\rm F1})\vs\vs
\noindent Now, $ L $ being a geodesic in $ \Ga $ implies that $ X_i+X_{i+1}\in[\stack{W}{k+1}]_0 $, or equivalently, the set of vectors $ \eta_X \cup\eta_Y $ satisfies the following relation\,:\vs\vs
\hskip 0.5in $ B(x_i,y_j)=0 $ for $ 1\leqslant i\leqslant j\leqslant m $.\hskip 2.23in({\rm R1})\vs\vs

By a well-known property, the two singular subspaces $ X,Y $ are spanned by subsets of some polar frame. It is then verified directly that if and only if in the following case there exist no bases $ \eta_X $, $ \eta_Y $ satisfying relation (R1)\,: 
\begin{itemize}
\item[\bf(i)] for each $ x\in X\setminus Y $, there is $ y\in Y\setminus X $ such that $ B(x,y)\ne 0${\rm\,;} for each $ y\in Y\setminus X $, there is $ x\in X\setminus Y $ such that $ B(x,y)\ne 0${\rm\,;}\vskip1pt
\item[\bf(ii)] or equivalently, there is a basis $ \{\alpha_1,\ldots,\alpha_m\} $ of $ X/X\cap Y $ and a basis $ \{ \beta_1,\ldots,\beta_m\} $ of $Y/X\cap Y$ such that $ B(\alpha_i,\beta_j)=\delta_{ij} $.\vs\vs
\end{itemize}

In case {\bf(i)} (or equivalently {\bf(ii)}), none of geodesics in $ \G_V(k) $ from $ X $ to $ Y $ can be realized as a geodesic in $ \Ga $. Hence this case falls into case (2) where $ \d_\Ga(X,Y)> \d_{\G_V(k)}(X,Y) $. Note that any vertex $ X' $ of $ \Ga $ adjacent to $ X $ intersects $ Y $ in a subspace of dimension no greater than the dimension of $ X\cap Y $. Then \[\d_\Ga(X',Y)\geqslant\d_{\G_V(k)}(X',Y)\geqslant \d_{\G_V(k)}(X,Y).\] The equality holds only for such $ X' $ that $ X', Y $ do not satisfy condition {\bf(i)}, and meanwhile, $ X'\cap Y=X\cap Y $. Such vertices $ X' $ exist in the following way\,: $ X'=X\cap X'+U $, where the intersection $ X\cap X' $ is a $ (k-1) $-subspace of $ X $ which contains $ X\cap Y $, and $ U $ is a singular $ 1 $-subspace of $(X+Y)^\perp$ which is not contained in $X+Y $. We then have $ \d_\Ga(X',Y)= \d_{\G_V(k)}(X,Y) $, and so $$\d_\Ga(X,Y)= \d_{\G_V(k)}(X,Y)+1. $$ \vs

Now, for any geodesic $ L:(X,X',\ldots,Y) $, the sub-geodesic $ L' $ from $ X' $ to $ Y $ is in case (1). Extend a basis $ \eta=\{w_1,\ldots,w_{k-m}\} $ of $ X'\cap Y $, respectively, to a basis $ \eta\cup\{x_0,\ldots,x_{m-1}\} $ of $ X'$ and  a basis $ \eta\cup\{y_1,\ldots,y_m\} $ of $ Y $ such that\vs\vs
\hskip 0.5in $ L':(X'=X_0,\,X_1,\ldots,X_{m-1},\,X_m=Y) $, where for  $ 0\leqslant i \leqslant m $,\vs
\hskip 0.5in $X_i=\l x_0,\,x_1,\ldots x_{m-i-1},\,y_{m-i+1},\ldots,y_{m},\,w_1,\ldots,w_{k-m}\r $, \hskip0.5in({\rm F2.1})\vs
\noindent while the following relation is satisfied\,:\vs\vs
\hskip 0.5in $ B(x_i,y_j)=0 $,\, for $ 0\leqslant i<j\leqslant m $.\hskip 2.25in({\rm $\ast$})\vs\vs

\noindent In particular, $ B(x_0,Y)=0$. Thus, $ x_0\notin X $, and so $ X'=\l x_0\r+X\cap X' $. By redefining each of $ x_{1},\ldots,x_{m-1} $ as itself plus a multiple of $ x_{0} $ if necessary, we can let $ X\cap X'=\l x_1,\ldots,x_{m-1},\,w_1,\ldots,w_{k-m}\r$, and let \vs\vs
\hskip 0.5in $X=\l x_1,\ldots,x_{m-1},x_m,\,w_1,\ldots,w_{k-m}\r$,\, where $ x_m\in X\setminus X' $. \hskip 0.16in({\rm F2.2})\vs\vs

\noindent Further, since $ X,Y $ satisfy condition {\bf(i)}, it follows by ($ \ast $) that $ B(x_i,y_i)\ne 0 $ for each $ i $ from $ m $ to $ 1 $. After a process of normalization which redefines each $ y_i $ from $y_{m-1}$ to $ y_1 $ as certain one lying in $\l y_i,y_{i+1},\ldots,y_m\r$, we can require that\vs\vs
\hskip 0.5in $B(x_i,y_j)=\delta_{ij}$\, for $ 1\leqslant i,j\leqslant m $. \hskip2.2in({\rm R2})\vs\vs

In conclusion, we have the following result.

\begin{defi}
Two vertices $ {X,Y} $ are called \textbf{opposite}, if they satisfy condition {\bf(i)} {\rm(}or equivalently, {\bf(ii)}{\rm)} stated above.
\end{defi}

\begin{propo}\label{prop-pg-geo}
Let $ X,Y $ be two vertices of $ \Ga=\PG_{W}(k) $, $ k<\o $.
\begin{itemize}
\item[{\rm (1)}] If $ X,Y $ are not opposite, then $ \d(X,Y)=k-dim\ X\cap Y $, and a geodesic from $ X $ to $ Y $ is of form {\rm (F1)} with relation {\rm (R1)} satisfied.\vs
\item[{\rm (2)}] If $ X,Y $ are opposite, then $ \d(X,Y)=k-dim\ X\cap Y +1 $, and a geodesic from $ X $ to $ Y $ is of form {\rm (F2.1)}\,-\,{\rm (F2.2)} with relation {\rm (R2)} satisfied.\vs
\end{itemize}
\noindent In particular, the graph $ \Ga $ is of diameter $ k+1 $.
\end{propo}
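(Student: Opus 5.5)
The plan is to organize the proof around the comparison $\d_\Ga(X,Y)\geqslant \d_{\G_V(k)}(X,Y)=k-\dim X\cap Y=:m$, which holds because $\Ga$ is a subgraph of $\G_V(k)$ on the same type of vertices. First I will fix a polar frame adapted to $X$ and $Y$: a basis $\eta$ of $X\cap Y$, complements $\langle x_1,\ldots,x_m\rangle$ in $X$ and $\langle y_1,\ldots,y_m\rangle$ in $Y$, and the $m\times m$ Gram matrix $M=(B(x_i,y_j))$. Changing the bases acts on $M$ by $M\mapsto P^{\sf T}MQ$ (semilinearly in the unitary case). The dichotomy is then a statement about $M$. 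Condition (ii), i.e. $M$ invertible, is equivalent to (i), since (i) says that $M$ has no zero row or column after any change of basis. If $M$ is singular, reduce it by row and column operations to a strictly lower-triangular echelon form with a zero first row and a zero last column. Inductively this yields bases with $B(x_i,y_j)=0$ for $i\leqslant j$, which is (R1).

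For part (1), I will show that (F1) built from such bases is a path in $\Ga$. Each $X_i+X_{i+1}=\langle x_1,\ldots,x_{m-i},y_{m-i},\ldots,y_m,\eta\rangle$ is totally singular exactly because of (R1). So $\d_\Ga=m$. Conversely, every geodesic of length $m$ in $\Ga$ is a geodesic in $\G_V(k)$, so by Section \ref{sec-prospace} it has the form (F1), and the singularity of the consecutive sums forces (R1). The subtle point here is the existence step. To get it, I will take the orthogonal complement of $X\cap Y$ modulo itself, which reduces to the case $X\cap Y=0$. There I will use induction on $m$: a singular $M$ means some $x\in X$ is orthogonal to all of $Y$ (or dually). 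Set $X_1=\langle x_2',\ldots\rangle+\langle y\rangle$, choosing $y\in Y$ orthogonal to a hyperplane of $X$ containing $x$, so that $X_1$ shares more with $Y$ and the reduced pair is still non-opposite. I expect this inductive choice, which keeps the pair non-opposite at each step, to be the main obstacle. I will settle it by choosing $y$ in the radical direction of $M^{\sf T}$ and checking that the smaller Gram matrix stays singular.

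For part (2), opposite pairs admit no (R1) bases, so $\d_\Ga\geqslant m+1$. For the upper bound, I will construct a neighbour $X'=(X\cap X')+U$ with $U$ a singular point of $(X+Y)^\perp$ outside $X+Y$. Such a point exists because $X+Y$ is nondegenerate modulo $X\cap Y$ and $\o>k$. The $(k-1)$-space $X\cap X'$ is chosen to contain $X\cap Y$. Then $X'\cap Y=X\cap Y$, and the Gram matrix of $(X',Y)$ has a zero row, so $X',Y$ are not opposite and part (1) gives $\d(X',Y)=m$. Hence $\d_\Ga(X,Y)=m+1$. Any geodesic $(X,X',\ldots,Y)$ must have $\d(X',Y)=m$ and $X'\cap Y\supseteq X\cap Y$. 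Since $\dim X'\cap Y\leqslant\dim X\cap Y$ (otherwise $X$ would have smaller Grassmann distance via $X'$, contradicting oppositeness), this gives equality. Its tail is of form (F1), relabelled as (F2.1) with ($\ast$). The normalization in the text then gives (F2.2) and (R2), using invertibility of $M$ to triangularize the Gram matrix to the identity.

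Finally, for the diameter: $m\leqslant k$, and distance $k+1$ is attained by taking $X=\langle x_1,\ldots,x_k\rangle$ and $Y=\langle y_1,\ldots,y_k\rangle$ from a polar frame, which is an opposite pair with $X\cap Y=0$. Every pair has distance at most $m+1\leqslant k+1$, so the diameter is $k+1$.
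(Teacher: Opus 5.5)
Your proposal follows the paper's route. You compare with $\G_V(k)$ and show that (R1)-bases exist exactly for non-opposite pairs. Your observation that a singular Gram matrix $M$ is equivalent to a strictly lower-triangular one is a good way to fill in the paper's ``verified directly''. You then detour through $X'=(X\cap X')+U$, with $U$ a singular point of $(X+Y)^\perp$ outside $X+Y$, and normalize the tail to (R2). The inductive alternative you sketch for the existence step is unnecessary, and as written it is wrong. For $X_1=H+\langle y\rangle$ to be adjacent to $X$ in $\Ga$ you need $X+\langle y\rangle$ totally singular, i.e.\ $y\perp X$, not merely $y$ orthogonal to a hyperplane of $X$.

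The one genuine gap is in part (2). There you must show that an \emph{arbitrary} geodesic $(X,X',\ldots,Y)$ has $X'\cap Y=X\cap Y$. More precisely you need $X'\cap Y\subseteq X$ to get (F2.2), since the basis $w_1,\ldots,w_{k-m}$ of $X'\cap Y$ must lie in $X\cap X'$. You assert $X'\cap Y\supseteq X\cap Y$ with no reason. You justify $\dim X'\cap Y\leqslant\dim X\cap Y$ by saying that otherwise $X'$ would be closer to $Y$ in $\G_V(k)$, contradicting oppositeness. But distances alone give no contradiction. If $\dim X'\cap Y=\dim X\cap Y+1$, the pair $(X',Y)$ can be opposite with $\d_\Ga(X',Y)=(m-1)+1=m$. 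Such totally singular $X'$ do exist: take $X'=H+\langle y\rangle$ with $y\in Y\setminus X$ and $H\supseteq X\cap Y$ the hyperplane of $X$ orthogonal to $y$. This is perfectly consistent with $X'$ lying on a geodesic of length $m+1$; what excludes it is only that such $X'$ is not $\Ga$-adjacent to $X$.

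The missing idea is to use adjacency in $\Ga$. Since $X+X'$ is totally singular, every $v\in(X'\cap Y)\setminus X$ lies in $Y\setminus X$ and is orthogonal to all of $X$, contradicting condition {\bf(i)}. Hence $X'\cap Y\subseteq X\cap Y$. Combined with $\dim X'\cap Y\geqslant k-m$, which follows from $\d_{\G_V(k)}(X',Y)\leqslant\d_\Ga(X',Y)=m$, this gives $X'\cap Y=X\cap Y$. This is the content of the paper's remark that a $\Ga$-neighbour of $X$ meets $Y$ in dimension at most $\dim X\cap Y$. With it, the rest of your argument goes through.
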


\begin{corol}\label{cor-notdr}
For $ 1< k<\o $, the graph $ \Ga=\PG_{W}(k) $ is not distance-regular.
\end{corol}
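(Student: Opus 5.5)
We need to show that for $1<k<\o$ the graph is not distance-regular. The plan is to exhibit two pairs of vertices at the same distance whose intersection numbers differ. By Proposition \ref{prop-pg-geo}, the diameter is $k+1$, and distance $k+1$ is attained exactly by opposite pairs with $X\cap Y=0$. Distance $2$, on the other hand, arises in two different ways:
\begin{itemize}
\item[(a)] non-opposite pairs with $\dim X\cap Y=k-2$;
\item[(b)] opposite pairs with $\dim X\cap Y=k-1$.
\end{itemize}
Case (b) requires $m=1$ and $B(x_1,y_1)\neq 0$, which is possible since $k<\o$ allows non-orthogonal points. I would compare $c_2$ for a pair of type (a) with $c_2$ for a pair of type (b), counting common neighbours via the geodesic descriptions (F1)/(R1) and (F2.1)--(F2.2)/(R2).

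For type (b), write $X=\l x_1\r+Z$ and $Y=\l y_1\r+Z$ with $Z=X\cap Y$ and $B(x_1,y_1)=1$. By (F2.1)--(F2.2), a middle vertex $X'$ satisfies $X'\cap Y=Z=X\cap X'$, so $X'=Z+\l u\r$. Here $u$ must be singular and lie in $(X+Y)^\perp$ but not in $Z$. The vertices $X'$ therefore correspond to the singular points of the nondegenerate space $Z^\perp/(Z+\l x_1,y_1\r)$, which has Witt index $\o-k$. So $c_2$ equals the number of singular points of a polar space of rank $\o-k$ of the same type as $W$.

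For type (a), with $m=2$, form (F1) says the middle vertex is $X_1=\l x_1,y_2\r+Z$. Here $x_1$ ranges over a point of $X/Z$, $y_2$ over a point of $Y/Z$, and the constraint from (R1) is $B(x_1,y_2)=0$, together with $X_1$ being adjacent to both ends. I would pick the simplest type (a) configuration: $X,Y$ spanned by $Z$ plus two points each, with $X+Y$ totally singular, which is possible when $k+2\le \o$. In that configuration every choice works, giving $c_2=(q+1)^2$, as in the Grassmann graph. When $k=\o-1$ this configuration does not exist, so instead I would take $B$ restricted to $X/Z\times Y/Z$ to have rank $1$; then the count is $q+2$ for the non-orthogonal choices plus whatever the form allows.

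In each case I would then check that the type (b) value differs. In type (b) there are about $q^{2(\o-k)-1}$-ish singular points, and these can be computed explicitly per Table \ref{tab-pspace}. For $k=\o-1$ this is a rank-$1$ polar space, with $q^{e}+1$ points where $e\in\{0,1/2,1,3/2,2\}$-style exponents appear. This value differs from the type (a) count, e.g.\ $q+2\ne$ the value $q^e+1$ except possibly in one degenerate case. The main obstacle is this case-by-case comparison, especially the hyperbolic case $\Omega^+$ with $k=\o-1$: there $Z^\perp/(Z+\l x_1,y_1\r)$ is a hyperbolic line with $2$ singular points, while type (a) gives at least $q+1$. Since $q+1>2$, this case still separates.

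As a cleaner fallback, avoiding exact counts, I would compare $b_1$ values or the $k_2$ valencies. Alternatively, I would argue that distance-regularity forces $|\Ga_2(X)|$ to be computable as $k_1b_1/c_2$; type (a) and type (b) pairs live in different $G_W$-orbits with different local structure. Either way the conclusion is that $c_2$ is not constant, so $\Ga$ is not distance-regular.
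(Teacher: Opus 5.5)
\paragraph{Verdict: genuine gap.} Your strategy is sound: show that a distance-$2$ intersection number is not constant by comparing an opposite pair with $\dim X\cap Y=k-1$ (type (b)) against a non-opposite pair with $\dim X\cap Y=k-2$ (type (a)). Your type (b) count is also right: a common neighbour is forced to be $Z+\l u\r$ with $u$ singular in $(X+Y)^\perp\setminus Z$. Hence $c_2$ is the number of points of the rank-$(\o-k)$ polar space $(X+Y)^\perp/Z$.

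\paragraph{Where the comparison fails.} For $k\leqslant\o-2$ you take the type (a) pair with $X+Y$ totally singular, which gives $(q+1)^2$. But for $W=\Omega^+(2\o,q)$ and $k=\o-2$, the type (b) polar space is $Q^+(3,q)$, which also has exactly $(q+1)^2$ points. For instance, for lines of $Q^+(7,q)$ the two values coincide, so this case is not separated. The case you flagged as the obstacle ($\Omega^+$ with $k=\o-1$) is not the problematic one.

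\paragraph{The rank-one count is wrong.} Suppose $B$ induces a rank-one pairing on $X/Z\times Y/Z$. By (F1)--(R1) with $m=2$, a common neighbour is $Z+\l x_1,y_2\r$ with $x_1\perp Y$ and $y_2\perp X$. Here $X\cap Y^\perp$ and $Y\cap X^\perp$ are each a single hyperplane over $Z$, so $c_2=1$. It is not $q+2$, and in the hyperbolic $k=\o-1$ case it is not ``at least $q+1$''.

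\paragraph{The fallbacks do not work.} $G_W$ is transitive on vertices and on ordered edges, so $b_1$ and $k_2$ are constant and cannot distinguish anything. Lying in different $G_W$-orbits does not by itself force different intersection numbers.

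\paragraph{How to repair it.} Use the rank-one non-opposite pair for every $k$. It exists whenever $2\leqslant k\leqslant\o-1$: for example, in a polar frame $\{e_i;f_i\}$ take $X=\l e_1,\ldots,e_k\r$ and $Y=\l e_1,\ldots,e_{k-2},f_k,f_{k+1}\r$. This pair has $c_2=1$. Type (b) has $c_2\geqslant 2$, because a nondegenerate space of Witt index $\o-k\geqslant1$ contains a hyperbolic pair and hence two singular points.

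With that change you have a valid $c_2$-version of the paper's proof. The paper uses exactly these two orbits (its $X_1$ is type (b), its $X_2$ is the rank-one type (a)) but compares $b_2$ instead. It shows every neighbour of $X_1$ stays within distance $2$ of $X$, while $X_2$ has a neighbour $Y_2$ at distance $3$. That avoids counting points of polar spaces altogether.
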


\begin{proof}
Let $ X=\l x_1,\ldots,x_{k}\r $ be a vertex of $ \Ga $. Expand these vectors to a polar frame $\{x_1,\ldots,x_{k},\,x_{k+1},\ldots;\,y_1,\ldots,y_{k},\,y_{k+1},\ldots\} $. The following vertices \[\mbox{$X_1=\l x_1,\ldots,x_{k-1},\,y_{k}\r$ and $X_2=\l x_1,\ldots,x_{k-2},\,y_{k},y_{k+1}\r$}\] are both at distance $ 2 $ from $ X $. Let $ Y_1 $ be a vertex adjacent to $ X_1 $. Then $ X_1\cap X $, $ X_1\cap Y_1 $ are two hyperplanes of $ X_1 $ which intersect in a subspace of dimension no less than $ k-2 $. It follows that, either $ X\cap Y_1 $ is a $ (k-1) $-space\,; or it is a $ (k-2) $-space contained in $ X_1\cap X $, so that, $ X,Y_1 $ are not opposite. In both cases, $ \d_\Ga(X,Y_1)\leqslant 2 $. On the other hand, $ Y_2=\l x_1,\ldots,x_{k-2},\,y_{k-1},y_{k}\r $ is a vertex adjacent to $ X_2 $ but at distance $ 3 $ from $ X $. Thus, $ \Ga $ is not distance-regular.
\end{proof}\vs

\subsection{\it Orbits of geodesics}{~}\vs

Let $ \Ga=\PG_{W}(k) $, $ k<\o $. We determine the $ \Aut\,\Ga $-orbits of geodesics in $ \Ga $. For convenience, we call a geodesic \textit{opposite} if its two endpoints are opposite\,; otherwise, call it \textit{non-opposite}. Recall $ \Aut\,\Ga=G_W $. It is clear that opposite geodesics and non-opposite geodesics lie in different $ \Aut\,\Ga $-orbits.\vs

\begin{propo}\label{prop-pg-op}
For $2\leqslant\ell\leqslant k+1 $, the opposite geodesics of length $ \ell $ form a single $ \Aut\,\Ga $-orbit. 
\end{propo}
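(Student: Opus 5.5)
The plan is to reduce everything to Witt's Lemma, as in the proof of Proposition~\ref{prop-dualpolar}. Let $L$ be an opposite geodesic of length $\ell$, $2\leqslant\ell\leqslant k+1$, from $X$ to $Y$. By Proposition~\ref{prop-pg-geo}(2), $\ell=m+1$ with $m=k-\dim X\cap Y$, so $\dim X\cap Y=k-\ell+1$ is fixed by $\ell$. Moreover $L=(X,X',X_1,\ldots,X_{m-1},Y)$ has the form (F2.1)--(F2.2) with respect to vectors
\[\Theta_L=\{x_0,x_1,\ldots,x_m;\ y_1,\ldots,y_m;\ w_1,\ldots,w_{k-m}\},\]
normalized so that (R2) holds. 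Every subspace in $L$ is spanned by a prescribed subset of $\Theta_L$, and which subset depends only on $\ell$. Hence any $g\in G_W$ with $\Theta_L^g=\Theta_{L'}$, respecting the labels, maps $L$ to $L'$. It therefore suffices to show two things:
\begin{itemize}
\item[(a)] the vectors of $\Theta_L$ are linearly independent;
\item[(b)] the Gram matrix (and, in the orthogonal case, the values of the quadratic form) of $\Theta_L$ is the same for every opposite geodesic of length $\ell$.
\end{itemize}
Then the labelled bijection $\Theta_L\to\Theta_{L'}$ extends to a linear isometry between the spans, and by Witt's Lemma to an element of $G_W=\Aut\,\Ga$.

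For (b), I would list the inner products.
\begin{itemize}
\item All vectors of $\Theta_L$ are singular, since each lies in some singular vertex.
\item $B(x_i,x_j)=0$ for $0\leqslant i,j\leqslant m$ and $B(x_0,w_t)=0$: the space $X+X'$ is singular, because $X$ and $X'$ are adjacent.
\item $B(x_0,y_j)=0$ for all $j\geqslant 1$, by ($\ast$).
\item $B(y_i,y_j)=B(y_i,w_t)=0$, since $Y$ is singular.
\item $B(w_s,w_t)=0$ and $B(x_i,w_t)=0$, since $X$ is singular.
\item $B(x_i,y_j)=\delta_{ij}$ for $1\leqslant i,j\leqslant m$, by (R2).
\end{itemize}
So the Gram matrix is the fixed one in which $w_1,\ldots,w_{k-m}$ and $x_0$ span the radical, and the remaining vectors form a hyperbolic system $\{x_i;y_i\}_{i=1}^m$. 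In the orthogonal case in characteristic $2$ I would check that Witt's Lemma is applied to the quadratic form; this is fine because all vectors are singular.

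For (a), the vectors $x_1,\ldots,x_m,y_1,\ldots,y_m,w_1,\ldots,w_{k-m}$ form a basis of $X+Y$, since the $w$'s span $X\cap Y$. It remains to see $x_0\notin X+Y$. Suppose $x_0\in X+Y$. By (b), $x_0\perp X+Y$, and the radical of $B|_{X+Y}$ is exactly $X\cap Y$, because $X/X\cap Y$ and $Y/X\cap Y$ are in perfect duality by condition (ii). Hence $x_0\in X\cap Y\subseteq X$, contradicting $x_0\in X'\setminus X$.

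I expect the main obstacle to be the normalization step: checking that any opposite geodesic really admits such a labelled system $\Theta_L$. This has to be done simultaneously for $X'$, the $X_i$ and $X$, using the redefinitions in (F2.2) and the triangular change of the $y_i$ leading to (R2). In particular one must verify that these redefinitions do not alter the subspaces $X_i$ in (F2.1); this holds because the $y$-changes are upper triangular, compatible with the filtration $\l y_{m-i+1},\ldots,y_m\r$, and the $x$-changes only add multiples of $x_0$, which lies in every $X_i$ containing the relevant $x_j$. I would also check the unitary case, where $\sigma$-semilinear scalars cause no harm since we normalize to $B(x_i,y_j)=\delta_{ij}\in\mathbb{F}_p$. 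Once this is in place, Witt's Lemma gives a single $\Aut\,\Ga$-orbit.
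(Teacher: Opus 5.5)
Your proposal is correct and follows the paper's proof. Both arguments take the normalized vector system from Proposition~\ref{prop-pg-geo}(2), observe that (R2) together with singularity completely determines its Gram matrix, and carry one system onto another by Witt's Lemma. The only difference is that the paper first extends $M$ to a full polar frame and applies Witt's Lemma to that frame, while you apply it directly to the span of $\Theta_L$. You also verify explicitly the linear independence ($x_0\notin X+Y$) that the paper's frame extension uses without comment.
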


\begin{proof}
Let $ L $ be an opposite geodesic of length $ \ell=m+1 $. By proposition \ref{prop-pg-geo}, there is a set of vectors $ M=\{x_0,x_1,\ldots,x_m,\,y_1,\ldots,y_m,\,w_1,\ldots,w_{k-m}\} $ which induces $ L $ in form {\rm (F2)} and satisfies relation {\rm (R2)}. Note that the relation (R2) is \textit{complete}, i.e., for any $ u,v\in M $, the value $ B(u,v) $ is determined as either $ B(u,v)= 0 $, or $ \{u,v\}=\{x_i,y_i\} $, $ B(u,v)=1 $. Consequently, $ M $ can be expanded to a polar frame $ N $ of $ W $ in a fixed way\,:
$$ N=\{x_1,x_2,\ldots,x_m,\,x_0,w_1,\ldots,w_{k-m},\cdots\,;\,y_1,y_2,\ldots,y_m,\,y_0,z_1,\ldots,z_{k-m},\cdots\}. $$
	
Now, any other opposite geodesic $ \widetilde{L} $ of length $ \ell $ is induced by some set of vectors $ \widetilde{M} $ which can be expanded to a polar frame $ \widetilde{N} $ in a similar manner. By Witt's Lemma, there is $ g\in G_W $ which maps the frame $ N $ to $ \widetilde{N} $, by mapping each $ v\in N $ to the corresponding $ \widetilde{v}\in \widetilde{N} $. In particular, $ g $ maps $ M $ to $ \widetilde{M} $ and $ L $ to $ \widetilde{L} $. 
\end{proof}\vs\vs

We then consider non-opposite geodesics. Let $ L:(X_0,X_{1},\ldots,X_{m}) $ be such a geodesic, where $ 1\leqslant m\leqslant k $. By proposition \ref{prop-pg-geo}, there is a set of vectors $ M=\{x_1,\ldots,x_m,\,y_1,\ldots,y_m,\,w_1,\ldots,w_{k-m}\} $ which induces $ L $ in form (F1) and satisfies relation ({\rm R1})\,: $ B(x_i,y_j)=0 $ for $ 1\leqslant i\leqslant j\leqslant m $.\vs

Note that (R1) is not as complete as (R2), since $B(x_i,y_k)$ for  $1\leqslant k<i\leqslant m$ is not yet determined. After a process of normalization which redefines each $ y_i $ as certain one in $ \l y_i,y_{i+1},\ldots,y_m\r $ and each $ x_i $ as certain one in $ \l x_0,x_1,\ldots,x_i\r $, we can rewrite (R1) as follows\,: for each $ x_i $, $ 1\leqslant i\leqslant m $, either\vs\vs

\hskip0.1in $B(x_i,Y)=0$\,; or $ B(x_i,y_{i'})=1 $, for $ i'<i $ and $ B(x_i,y_k)=0 $ for $ k\ne i' $, where\vs
\hskip0.1in if $ B(x_i,y_{i'})=1=B(x_j,y_{j'}) $ and $i\ne j$, then $i'\ne j' $.\hskip1.5in (RI)\vs
\noindent 

\begin{pro}\label{prop-pg-nonop}
The non-opposite geodesics admitting the same set of relations in {\rm (RI)} form a single $ \Aut\,\Ga $-orbit.
\end{pro}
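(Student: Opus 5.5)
The plan mirrors the proof of Proposition \ref{prop-pg-op}: show that once the pattern of relations (RI) is fixed, the Gram matrix of the inducing vector set is completely determined, so that the set extends to a polar frame in a canonical way. Witt's Lemma then supplies the required element of $G_W=\Aut\,\Ga$.

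Let $L:(X_0,\ldots,X_m)$ and $\widetilde L:(\widetilde X_0,\ldots,\widetilde X_m)$ be non-opposite geodesics with the same relations in (RI). I will encode these relations as a partial injection $\pi$ from $\{1,\ldots,m\}$ to itself, with $\pi(i)=i'<i$ when $B(x_i,y_{i'})=1$ and $\pi(i)$ undefined when $B(x_i,Y)=0$. By Proposition \ref{prop-pg-geo}(1) and the normalization preceding the statement, there are normalized vector sets
\[
M=\{x_1,\ldots,x_m,\,y_1,\ldots,y_m,\,w_1,\ldots,w_{k-m}\}
\]
and $\widetilde M$ inducing $L$ and $\widetilde L$ in form (F1), both with pattern $\pi$.

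The first step is to record the whole Gram matrix of $M$. Every pairing other than $B(x_i,y_j)$ vanishes: $X$, $Y$ and each $X_i+X_{i+1}$ are singular, and the $w$'s lie in $X\cap Y$. The pairings $B(x_i,y_j)$ are fixed by (RI), as is $B(x_i,x_j)=B(y_i,y_j)=0$. So $B$ restricted to $M$ is determined by $\pi$ alone, and the linear map $M\to\widetilde M$ sending $v\mapsto\widetilde v$ is an isometry of $\langle M\rangle$ onto $\langle\widetilde M\rangle$.

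Witt's Lemma then extends this isometry to some $g\in G_W$. The map $g$ carries each $X_i$ to $\widetilde X_i$, because both are spanned by the corresponding subsets of $M$ and $\widetilde M$ in (F1), and so $L^g=\widetilde L$. To be safe about degenerate pieces, I will first extend $M$ explicitly to a polar frame; this is the same fixed recipe as in Proposition \ref{prop-pg-op}. Concretely:
\begin{itemize}
\item for each pair $i'=\pi(i)$, the pair $(x_i,y_{i'})$ already behaves like a hyperbolic pair;
\item the unmatched $x_i$ and $y_j$, together with the $w_l$, receive new partners $y_i'$, $x_j'$ and $z_l$ chosen in the appropriate perps.
\end{itemize}
Doing the same construction for $\widetilde M$ and mapping frame to frame gives $g$ directly.

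The main obstacle is the normalization step, which I take as given from the text but must check is consistent. One must confirm that (RI) really can always be reached by replacing $y_i$ with elements of $\langle y_i,\ldots,y_m\rangle$ and $x_i$ with elements of $\langle x_1,\ldots,x_i\rangle$, so that the flags (F1) are preserved and $L$ is unchanged. This is a Gaussian-elimination argument on the lower-triangular matrix $(B(x_i,y_j))$: process the columns from $j=m$ downward and the rows upward, clearing entries and scaling pivots to $1$. The point to check is that these operations stay within the allowed triangular shapes. I will also check that $\pi$ is determined by $L$ alone, for instance through the ranks of the submatrices $(B(x_i,y_j))_{i\le a,\,j\ge b}$, which are invariants of the two flags. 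This makes the orbits well defined, although the statement as given needs only the transitivity direction.
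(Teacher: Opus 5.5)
Your argument is correct and is the paper's own: once the pattern in (RI) is fixed, the Gram matrix on $M$ (a basis of $X+Y$) is determined, so the frame-to-frame isometry supplied by Witt's Lemma, exactly as in Proposition~\ref{prop-pg-op}, carries $L$ to $\widetilde L$. The paper also proves, inside this same proposition, that different relation sets give different orbits, which Proposition~\ref{prop-pg-orbitsnum} relies on. It does this with the invariant $\dim(X_{m-a}\cap X_{m-b+1}^{\perp})-\dim(X_{m-a}\cap X_{m-b+1})$, which equals $a-b+1$ minus your rank of $(B(x_i,y_j))_{i\leqslant a,\,j\geqslant b}$. Your proposed well-definedness check is therefore the same argument, and you should include it rather than treat it as optional.
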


\begin{proof}
It is similar to Proposition \ref{prop-pg-op} to prove that geodesics admitting the same relations lie in the same $ \Aut\,\Ga $-orbit. On the other hand, let $ L $, $\widetilde{L}$ be two geodesics induced, respectively, in form (F1) by the sets of vectors\vs

$ M=\{x_1,\ldots,x_m,\,y_1,\ldots,y_m,\,w_1,\ldots,w_{k-m}\} $, and \vs

$ \widetilde{M}=\{\widetilde{x}_1,\ldots,\widetilde{x}_m,\,\widetilde{y}_1,\ldots,\widetilde{y}_m,\,\widetilde{w}_1,\ldots,\widetilde{w}_{k-m}\}  $. \vs

\noindent Suppose $ M, \widetilde{M} $ satisfy different relations. Without loss of generality, let $ a $ be the minimal subscript such that\vs
\hskip1in $B(x_a,y_b)=1=B(\widetilde{x}_a,\widetilde{y}_c)$, where $b>c$\,; or\vs
\hskip1in $B(x_a,y_b)=1$, while $B(\widetilde{x}_a,\widetilde{y}_k)=0$, for all $\widetilde{y}_k\in \widetilde{M}$.\vs
\noindent Note that $ b,c<a $. Consider the following vertices lying in $ L $ or $ \widetilde{L} $\,:\vs
$ X_{m-a}=\l x_1,\ldots,x_a,y_{a+1},\ldots,y_m,\cdots\r $,\ \ $ \widetilde{X}_{m-a}=\l \widetilde{x}_1,\ldots,\widetilde{x}_a,\widetilde{y}_{a+1},\ldots,\widetilde{y}_m,\cdots\r $,\vs
$X_{m-b+1}=\l x_1,\ldots,x_{b-1},y_{b},\ldots,y_m,\cdots\r $,\ \ $\widetilde{X}_{m-b+1}=\l \widetilde{x}_1,\ldots,\widetilde{x}_{b-1},\widetilde{y}_{b},\ldots,\widetilde{y}_m,\cdots\r $.\vs
\noindent Then, in both cases, we have \[\begin{array}{cl}
& \dim\, (X_{m-a}\cap X_{m-b+1}^{\perp})-\dim\,(X_{m-a}\cap X_{m-b+1})\\
= & \dim\, (\widetilde{X}_{m-a}\cap \widetilde{X}_{m-b+1}^{\perp})-\dim\,(\widetilde{X}_{m-a}\cap \widetilde{X}_{m-b+1})-1.
\end{array}\] If there is an element in $ \Aut\,\Ga=G_W $ which maps $ L$ to $ \widetilde{L} $, then, respectively, it maps $ X_{m-a}$, $X_{m-b+1}$ and $X_{m-b+1}^{\perp} $ to $ \widetilde{X}_{m-a} $, $ \widetilde{X}_{m-b+1} $ and $\widetilde{X}_{m-b+1}^\perp $. We thus get a contradiction. Hence $ L, \widetilde{L} $ lie in different $ \Aut\,\Ga $-orbits.
\end{proof}\vs

We then calculate the number of these orbits, or equivalently, how many ways there are to determine a set of relations in (RI). We divide geodesics into different types. The {\it type} of $ L:(X=X_0,X_{1},\ldots,X_{m}=Y) $ is defined to be the tuple $ \t= (t_{1},\ldots,t_{m})$ where each $$ t_{i}:=\dim\, (X_m\cap X_{m-i}^{\perp})-\dim\,(X_m\cap X_{m-i}). $$ 

Recall that $ L $ is induced by $ M=\{x_1,\ldots,x_m,\,y_1,\ldots,y_m,\,w_1,\ldots,w_{k-m}\} $ as\vs
\hskip 0.5in $X_i=\l x_1,\ldots x_{m-i},\,y_{m-i+1},\ldots,y_{m},\,w_1,\ldots,w_{k-m}\r$.\hskip1.2in({\rm F1})\vs
\noindent It is straightforward to verify that the type $ \t $ of $ L $ is a non-decreasing sequence satisfying that\,:\vs
\begin{itemize}
\item[\bf(\t.1)]\ $ t_{1}=1 $\,; since $ B(x_1,y_k)=0 $ for all $ y_k\in M $.\vs
\item[\bf(\t.2)]\ $ t_{i}=t_{i-1}+1 $ or $ t_{i-1} $\,; while the former holds if $ B(x_i,y_{k})=0 $ for all $ y_k\in M $, and the latter holds if $ B(x_{i},y_{i'})=1 $ for some $ y_{i'} $ with $ i'<i$.\vs
\item[\bf(\t.3)]\ $ t_{i}\leqslant \o-k $\,; as $ (X_m\cap X_{m-i}^{\perp})+X_{m-i} $ is a singular $(t_i+k) $-subspase.
\end{itemize}\vs

\noindent In other word, each given type $ \t=(t_{1},\ldots,t_{m}) $ corresponds to a choice of such vectors $ x_i\in M $ satisfying that $ B(x_i,y_k)=0 $ for all $ y_k\in M $. Note that $ \t $, as a non-decreasing sequence starting from $ 1 $, is determined by the increasing items. Let $ 1\leqslant a_1< a_2<\cdots< a_{t_{m}-1}<a_{t_m}=m $ be a sequence of subscripts such that \[\mbox{$t_{a_{s}}=s$\,  and\,  $t_{a_{s}+1}=s+1$.}\] That is, $ t_{a_{s}+1} $ is an increasing item. Let $I= \{a_{1}+1,a_{2}+1\ldots,a_{t_m-1}+1\}$ be the set of subscripts of these increasing items. Then\,:\vs
\hskip1in for $ i\in I\cup\{1\} $,\,   $B(x_i,y_k)=0$ for all $ y_k\in M $\,;\vs
\hskip1in for $ i\notin I\cup\{1\} $,\,   $ B(x_i,y_{i'})=1 $ for some $ i'<i $.\vs\vs

Further, for each $ a $ with $ 1<a\leqslant a_1 $, $ B(x_{a},y_{a'})=1 $ for some $ y_{a'}\in\{y_1,\ldots,y_{a-1}\} $, while $ a-2 $ vectors among them have already been chosen by $ x_b $ (with $1<b<a$), so there is only $ 1 $ choice for $ y_{a'} $. Similarly, for $ 2\leqslant s\leqslant t_m $ and each $ a $ with $ a_{s-1}+1<a\leqslant a_{s} $, $ B(x_{a},y_{a'})=1 $ for some $ y_{a'} \in \{y_1,\ldots,y_{a-1}\} $, where $ a-s-1 $ vectors among them have been chosen by $ x_b $ (with $1<b<a$, and $b\notin\{a_1+1,a_2+1,\ldots,a_{s-1}+1\} $), so there are $ s $ choices for $ y_{a'} $. In conclusion, the type $ \t= (t_{1},\ldots,t_{m}) $ give rise to $ c_{\t} $ different sets of relations in (RI), where $$ c_{\t}= 1^{(a_1-1)}\times 2^{(a_2-a_1)-1}\times\cdots\times t_m^{(a_{t_m}-a_{t_{m-1}})-1}=(\prod _{i=1}^{m}t_{i})/(t_{m}!). $$

\begin{propo}\label{prop-pg-orbitsnum}
For $ 1\leqslant m\leqslant k $, the non-opposite geodesics of length $ m $ are divided into $ \mathcal{L}(m) $ distinct $ \Aut\,\Ga $-orbits, where $ \mathcal{L}(m)=\sum\limits_\t c_\t $. In particular, if $ m\leqslant\o-k $, then $\mathcal{L}(m)$ is equal to the Bell number ${\rm B}(m)$.
\begin{table}[hbt]
\centering
\caption{Bell numbers ({\sf OEIS}, A000110)}
\scalebox{0.8}{
\begin{tabular}{c|c|c|c|c|c|c|c|c}
\toprule[1pt]
${\rm B}(1)$ & ${\rm B}(2)$ & ${\rm B}(3)$ & ${\rm B}(4)$ & ${\rm B}(5)$ & ${\rm B}(6)$ & ${\rm B}(7)$ & ${\rm B}(8)$ & $\cdots$\\
\midrule[0.5pt]
			
$ 1 $ & $ 2 $ & $ 5 $ & $ 15 $ & $ 52 $ & $ 203 $ & $ 877 $ & $ 4140$ & $ \cdots $\\
\bottomrule[1pt]
\end{tabular}}
\end{table}
\end{propo}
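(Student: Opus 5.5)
The plan is to combine Proposition \ref{prop-pg-nonop} with the counting of relation sets carried out just before the statement. By Proposition \ref{prop-pg-nonop}, the $\Aut\,\Ga$-orbits of non-opposite geodesics of length $m$ correspond bijectively to the admissible sets of relations in (RI). So $\mathcal{L}(m)$ is the number of such relation sets.

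These relation sets are partitioned by type $\t$, since the type is determined by which $x_i$ satisfy $B(x_i,Y)=0$. For a fixed type we have already computed $c_\t$ relation sets. Hence $\mathcal{L}(m)=\sum_\t c_\t$, where the sum runs over the non-decreasing sequences $\t=(t_1,\ldots,t_m)$ satisfying (\t.1)\,-\,(\t.3).

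Two checks are still needed for this first part.
\begin{itemize}
\item Every such sequence, and every choice of relations counted in $c_\t$, is realized by an actual geodesic. I would build a polar frame, take $x_i$ and $y_j$ from it with exactly the prescribed Gram values, and confirm that (R1) holds.
\item Condition (\t.3) is what permits this. The $t_m$ vectors $x_i$ orthogonal to $Y$ need $t_m$ extra frame pairs beyond those used by $X_m$, and the Witt index supplies only $\o-k$ of them.
\end{itemize}
I expect the realizability check to be the main obstacle. Pairing some $x_i$ with an earlier $y_{i'}$ while keeping each $X_j$ singular requires choosing the remaining vectors inside appropriate perps, and this has to be done consistently.

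For the Bell number claim, I would observe that if $m\leqslant\o-k$, then (\t.3) is automatic because $t_i\leqslant i\leqslant m$. So the sum runs over all sequences satisfying only (\t.1) and (\t.2).

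I would then give a bijection with set partitions of $\{1,\ldots,m\}$, processing the indices $i=1,\ldots,m$ in order.
\begin{itemize}
\item If $B(x_i,Y)=0$, index $i$ opens a new block.
\item If $B(x_i,y_{i'})=1$ with $i'<i$, index $i$ joins an existing block.
\end{itemize}
The constraint that distinct $x_i$ use distinct $y_{i'}$ makes the number of available $y_{i'}$ equal to the number $t_{i-1}$ of blocks already open. This matches the counting behind $c_\t$.

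Summing $c_\t=\prod_{i\notin I\cup\{1\}}t_{i-1}$ over all $\t$ then gives the standard recursion for partitions by number of blocks, $S(i,s)=S(i-1,s-1)+sS(i-1,s)$. Hence $\sum_\t c_\t=\sum_s S(m,s)={\rm B}(m)$. As a sanity check, I would confirm the values ${\rm B}(2)=2$ and ${\rm B}(3)=5$ directly from (RI).
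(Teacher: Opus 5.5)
Your proposal is correct and follows the paper's proof. The formula $\mathcal{L}(m)=\sum_{\t}c_{\t}$ comes straight from Proposition \ref{prop-pg-nonop} together with the count of $c_{\t}$, and your Stirling-number recursion is just the paper's rhyme-scheme correspondence (each type $\t$ matched with the $c_{\t}$ restricted growth strings whose running maximum is $\t$) written recursively.

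The realizability check you flag, which the paper leaves implicit, is easy. Take a polar frame $\{e_i;f_i\}$ and set $w_j=e_j$ and $y_l=e_{k-m+l}$. Let each paired $x_i$ be the $f$ dual to its $y_{i'}$, and let the unpaired $x_i$ be $e_{k+1},\ldots,e_{k+t_m}$. This uses exactly the hypothesis $t_m\leqslant\o-k$.
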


\begin{proof}
It follows by Proposition \ref{prop-pg-nonop} that $ \mathcal{L}(m)=\sum\limits_\t c_\t $, where $ \t=(t_1,t_2,\ldots,t_m) $ ranges over all possible types, namely those satisfying conditions {\bf (\t.1)}\,-{\bf(\t.3)}.\vs
	
Suppose further that $ m\leqslant\o-k $. Then $ t_m\leqslant m\leqslant\o-k $, so that, the condition {\bf(\t.3)} is always satisfied. Then $ \t $ ranges over the sequences satisfying\,: \[\mbox{$t_1=1$,\ \  $t_{i+1}=t_{i}$ or $t_{i}+1$.}\] In this case, we claim that $ \mathcal{L}(m)=\sum\limits_\t c_\t $ is equal to the number of distinct rhyme schemes of length $ m $. A \textit{rhyme scheme} is a string of numbers such that the leftmost number is always `$ 1 $' and no number may be greater than one more than the greatest number to its left. Note that each given type $$ \t=(1,1,\ldots,1,\,2,2,\ldots,2,\,3,\ldots,s-1,\,s,s,\ldots,s) $$ exactly gives rise to $ c_{\t} $ distinct rhyme schemes in the following way\,:
$${\sf r}(\t)=\{(1,n_{11},\ldots,n_{1m_1},\,2,n_{21},\ldots,n_{2m_2},\,3,\ldots,s-1,\,s, n_{s1},\ldots,n_{sm_s})\}, $$ where each $ n_{i\ast}\in\{1,\ldots,i\} $ has $ i $ choices. Further, the positions of the increasing items in $ \t $ correspond exactly to the positions where the first `2', `3', `4', $\ldots$ appear in each of the rhyme schemes lying in $ {\sf r}(\t) $. Hence different types give rise to different rhyme schemes. As $ \t $ runs over the above sequences, it gives rise to all possible rhyme schemes. Then the claim holds.\vs
	
At last, we note that the number of distinct rhyme schemes of length $ m $ is known as the \textit{Bell number} ${\rm B}(m)$. These numbers can be defined in many different ways\,; see {\sf OEIS}, A000110.
\end{proof}\vs

\begin{exam}
{\rm (1)} Let $ W $ be a space with Witt index $\o\geqslant 8 $, and let $ \Ga_1=\PG_{W}(4) $. The types of non-opposite geodesics are shown as follows (a specific example is highlighted in red), where the numbers of $ \Aut\,\Ga $-orbits on these geodesics are $\mathcal{L}(1)=1$,\ $\mathcal{L}(2)=1\cdot 2=2$,\ $\mathcal{L}(3)=1\cdot 3+2=5$, $\mathcal{L}(4)=1\cdot 4+2\cdot 2+3+4=15$.
\begin{figure}[htb]
\scalebox{0.6}{
\begin{tikzpicture}
	\node[anchor= east] at (-0.3,0) {\large Types $ \t: $};
	
    \draw[line width= 1pt] (0,0)--(4,0);
	\draw[line width= 1pt] (1.5,-1)--(3.5,-1);
	\draw[line width= 1pt] (2,-2)--(3,-2);
	\draw[line width= 1pt] (1,0)--(2.5,-3);
	\draw[line width= 1pt] (2,0)--(3,-2);
	\draw[line width= 1pt] (3,0)--(3.5,-1);
		    	
	\foreach \x in {0,1,2,3,4} \filldraw[fill=black] (\x,0) circle (2pt);
	\foreach \x in {1,2,3,4} 
	\node[anchor= north east] at (\x,0) {$1$};
	\foreach \x in {1.5,2.5,3.5} \filldraw[fill=black] (\x,-1) circle (2pt);
	\foreach \x in {1.5,2.5,3.5} 
	\node[anchor= north east] at (\x,-1) {$2$};
	\foreach \x in {2,3} 
	\filldraw[fill=black] (\x,-2) circle (2pt);
	\foreach \x in {2,3} 
	\node[anchor= north east] at (\x,-2) {$3$};
	\filldraw[fill=black] (2.5,-3) circle (2pt);
	\node[anchor= north east] at (2.5,-3) {$4$};
	    	    
	\draw[line width=1pt,color=red] (0,0) -- (2,0);	
	\draw[line width=1pt,color=red] (2,0) -- (3,-2);
	\node[anchor= north west] at (3.2,-2) {\textcolor{red}{$ \t= (1,1,2,3) $}};
	
	\node at (12,-1)
	{\large $c_{\t}=
	\begin{cases}
	\,2^{1}, & \t=(1,2,2),\, (1,1,2,2),\,(1,2,2,3)\,;\\
	\,3^{1}, & \t=(1,2,3,3)\,;\\
	\,2^{2}, & \t=(1,2,2,2)\,;\\
	\,1, & \mbox{for\ remaining \t}.
	\end{cases} $};	
	
\end{tikzpicture}}
\end{figure}

\noindent {\rm(2)} Let $ W $ be a space with Witt index $\o=6 $, and let $ \Ga_2=\PG_{W}(4) $. The types are shown as follows, where the numbers of $ \Aut\,\Ga $-orbits on these geodesics are 
$\mathcal{L}(1)=1$,\ $\mathcal{L}(2)=1\cdot 2=2$,\ $\mathcal{L}(3)=1\cdot 2+2=4$, $\mathcal{L}(4)=1\cdot 2+2+4=8.$
\begin{figure}[htb]
\scalebox{0.6}{
\begin{tikzpicture}
	\node[anchor= east] at (-0.3,0) {\large Types $ \t: $};
	
    \draw[line width= 1pt] (0,0)--(4,0);
	\draw[line width= 1pt] (1.5,-1)--(3.5,-1);
	\draw[line width= 1pt] (1,0)--(1.5,-1);
	\draw[line width= 1pt] (2,0)--(2.5,-1);
	\draw[line width= 1pt] (3,0)--(3.5,-1);
			
	\foreach \x in {0,1,2,3,4} \filldraw[fill=black] (\x,0) circle (2pt);
	\foreach \x in {1,2,3,4} 
	\node[anchor= north east] at (\x,0) {$1$};
	\foreach \x in {1.5,2.5,3.5} \filldraw[fill=black] (\x,-1) circle (2pt);
	\foreach \x in {1.5,2.5,3.5} 
	\node[anchor= north east] at (\x,-1) {$2$};
	
	\node at (11,-0.5) 
	{\large $c_{\t}=
	\begin{cases}
	\,2^{1}, & \t=(1,2,2),\, (1,1,2,2)\,;\\
	\,2^{2}, & \t=(1,2,2,2)\,;\\
	\,1, & \mbox{for\ remaining \t}.
    \end{cases} $};		
\end{tikzpicture}}
\end{figure}
\end{exam}

We conclude with the {\it proof of \,{\rm Theorem \ref{thm-pg}}}\,:\vs
(i) $\Rightarrow$ (ii)\,: obvious.\vs
(ii) $\Rightarrow$ (iii)\,: Corollary \ref{cor-notdr}.\vs
(iii) $\Rightarrow$ (i)\,: a dual polar graph is geodesic-transitive (Proposition \ref{prop-dualpolar})\,; meanwhile, a polar graph is of diameter $ 2 $, where each $ 1 $-geodesic is with non-opposite ends and each $ 2 $-geodesic is with opposite ends (Proposition \ref{prop-pg-geo}), so it is geodesic-transitive by Propositions \ref{prop-pg-op} and \ref{prop-pg-orbitsnum}.\qed

%\section*{Acknowledgments} The author is sincerely grateful to Prof. C.H. Li of Southern University of Science and Technology of China, and to his Ph.D. supervisor Prof. R.Q. Feng of Peking University. They both offered invaluable support in all respects.

\section*{Declarations}
\begin{center}
{Funding and/or Conflicts of interests/Competing interests}
\end{center}

The author(s) declares that there is no any financial or personal relationship with other people or organizations not mentioned that can inappropriately influence the work.

\end{document}